\documentclass[10pt,reqno]{amsart}

\usepackage{verbatim}
\usepackage{graphicx}
\usepackage{amssymb}
\usepackage{hyperref}
\usepackage{enumerate}
\usepackage{enumitem}
\usepackage{multicol}

\usepackage{color}

\renewcommand{\bar}{\overline}

\renewcommand{\tilde}{\widetilde}

\renewcommand{\implies}{\Rightarrow}


\usepackage{setspace}


\usepackage{mathrsfs}

\newcommand{\gNorm}[1]{{\left\|\kern-0.24ex\left|{ #1 } \right|\kern-0.24ex\right\|}}

\newcommand{\Nabla}[1]{ {^{(#1)}{\nabla}} }

\renewcommand{\bar}{\overline}

\renewcommand{\epsilon}{\varepsilon}

\newcommand{\D}{d}

\newcommand{\B}{\mathcal{H}}

\newcommand{\phg}{\textup{\normalfont phg}}
\newcommand{\WAH}{\mathscr{M}_\textup{weak}}
\newcommand{\HAR}{\mathscr{M}_\textup{R}}

\renewcommand{\subset}{\subseteq}
\renewcommand{\setminus}{\!\smallsetminus\!}

\renewcommand{\(}{\textup{(}}
\renewcommand{\)}{\textup{)}}

\newcommand{\Defn}[1]{{\boldmath\it\bfseries #1}}

\DeclareMathOperator{\Riem}{Riem}
\DeclareMathOperator{\Ric}{Ric}
\DeclareMathOperator{\R}{R}
\DeclareMathOperator{\grad}{grad}
\DeclareMathOperator{\Hess}{Hess}

\DeclareMathOperator{\Div}{div}
\DeclareMathOperator{\tr}{tr}

\DeclareMathOperator{\Id}{id}


\theoremstyle{plain}
\newtheorem{theorem}{Theorem}
\newtheorem{lemma}[theorem]{Lemma}
\newtheorem{proposition}[theorem]{Proposition}
\newtheorem{corollary}[theorem]{Corollary}
\newtheorem{defn}[theorem]{Definition}
\newtheorem{remark}[theorem]{Remark}

\numberwithin{theorem}{section}
\numberwithin{equation}{section}


\title[Shear-free hyperboloidal data]{The shear-free condition and constant-mean-curvature hyperboloidal initial data}
\author{Paul T.~Allen, James Isenberg, John M.~Lee, Iva Stavrov Allen}

\email{ptallen@lclark.edu}

\email{johnmlee@uw.edu}

\email{isenberg@uoregon.edu}

\email{istavrov@lclark.edu}

\address{Department of Mathematical Sciences, Lewis \& Clark College}

\address{Department of Mathematics, University of Washington}

\address{Department of Mathematics, University of Oregon}

\address{Department of Mathematical Sciences, Lewis \& Clark College}

\subjclass[2010]{Primary 35Q75; Secondary 53C80, 83C05}


                                
\begin{document}

\begin{abstract}
We consider the Einstein--Maxwell--fluid constraint equations, and make use of the conformal method to 
construct and parametrize constant-mean-curvature hyperboloidal initial data sets that satisfy the shear-free condition. 
This condition is known to be necessary in order that a spacetime development admit a regular conformal boundary at future null infinity; see \cite{AnderssonChrusciel-Obstructions}.
We work with initial data sets in a variety of regularity classes, primarily considering those data sets whose geometries are \emph{weakly asymptotically hyperbolic}, as defined in \cite{WAH-Preliminary}.
These metrics are $C^{1,1}$ conformally compact, but not necessarily $C^2$ conformally compact.
In order to ensure that the data sets we construct are indeed shear-free, we make use of the conformally covariant traceless Hessian introduced in \cite{WAH-Preliminary}. 
We furthermore construct a class of initial data sets with weakly asymptotically hyerbolic metrics that may be only $C^{0,1}$ conformally compact; these data sets are insufficiently regular to make sense of the shear-free condition.
%
%
%
\end{abstract}

\maketitle

\setcounter{tocdepth}{2}


\section{Introduction}

Asymptotically flat spacetimes are used to model isolated astrophysical systems, and since the work of Penrose \cite{Penrose-AsymptoticBehavior}, it has been recognized  that one of the most useful mathematical ways to define and work with such spacetimes is to require that they admit a conformal compactification;  see, e.g.,~\cite{Frauendiener-ConformalInfinity}, \cite{HawkingEllis}, \cite{Wald}.
 In particular, if a  spacetime $(\mathbf M, \mathbf g)$ is asymptotically flat in the sense of admitting a conformal compactification\footnote{Historically, the term ``asymptotically simple" has been used for such spacetimes; informal usage calls these spacetimes asymptotically flat.}, 
then the manifold $\mathbf M$ is the interior of a closed manifold $\bar{\mathbf M}$ with boundary $\partial \mathbf M$, and the metric $\mathbf g$ can be written as $\mathbf g = \Omega^{-2}\bar{\mathbf g}$ for some  metric $\mathbf {\bar g}$ on $\mathbf {\bar M}$ and some non-negative function $\Omega\colon \mathbf{\bar M}\to[0,\infty)$, satisfying $\Omega^{-1}(0) = \partial \mathbf M$ and  $\D\Omega\neq 0$ along future and past null infinity $\mathscr I^+ \cup \mathscr I^- \subset \partial\mathbf M$.

We are interested in setting up the initial value problem as a tool for the construction and analysis of asymptotically flat spacetime solutions of Einstein's equations, making sure that the solutions being studied do indeed admit conformal compactifications. To understand how to do this, it is useful to consider foliations of a given asymptotically flat spacetime $(\mathbf M, \mathbf g)$  such that each leaf of the foliation is everywhere spacelike, and each leaf intersects the conformal  boundary $\partial {\mathbf M}$ along future null infinity $\mathscr I^+$. One leaf of such a foliation, together with its induced metric and its induced second fundamental form  (jointly satisfying the Einstein constraint equations), comprises an ``initial data set'' for the Einstein field equations.
While such leaves are not Cauchy surfaces for the entire spacetime, they suffice for the future evolution problem.
Furthermore, such foliations are natural for studying the outgoing gravitational and electromagnetic radiation of an isolated system and have been used in several numerical studies; see \cite{MoncriefRinne-Regularity},  \cite{Rinne-Axisymmetric}, \cite{MoncriefRinne-EinsteinYM}, \cite{Anil-HyperboloidalEvolution}, \cite{Anil-KerrTails}.

The simplest example of a spacetime foliated as above is the Minkowski spacetime $(\mathbf M=\mathbb R^{4},\mathbf g = -(\D x^0)^2 + (\D x^1)^2 + (\D x^2)^2 + (\D x^3)^2)$, foliated by the hyperboloids
\begin{equation}
M_t = \{-(x^0-t)^2 + (x^1)^2 + (x^2)^2 + (x^3)^2=-1\}.
\end{equation}
It is easy to see that the induced geometry  on $M_t$ is hyperbolic; indeed under an appropriate conformal compactification of the spacetime, the leaves $M_t$ are simply copies of the Poincar\'e disk model of hyperbolic space (see \cite{HawkingEllis},\cite{Wald}).
For more general asymptotically flat spacetimes, there is a wide class of foliations whose leaves $(M,g)$ intersect $\mathscr I^+$ transversely and are {asymptotically hyperbolic} 
in the following sense:
Let the manifold $M$ be the interior of a smooth compact manifold with boundary $\bar M$.
A $C^1$ function $\Omega\colon\bar M \to [0,\infty)$ is a \Defn{defining function} if $\Omega^{-1}(0) = \partial  M$ and $\D \Omega \neq 0$ along $\partial  M$.
A Riemannian metric $g$ on $M$ is called \Defn{conformally compact} if $g = \Omega^{-2}\bar g$ for some continuous metric $\bar g$ on $\bar M$ and some defining function $\Omega$.
If $\bar g$ is at least $C^2$ on $\bar M$, then the sectional curvatures of $g$ approach $-|\D\Omega|^2_{\bar g}$  as $\Omega\to 0$.
Thus we say that a conformally compact $(M,g)$ is
\Defn{\(strongly\) asymptotically hyperbolic} if $\bar g\in C^2(\bar M)$ and $|\D\Omega|_{\bar g}=1$ along $\partial  M$.

An initial data set for the Einstein equations on an asymptotically hyperbolic manifold, including a specification of the second fundamental form as well as perhaps certain non-gravitational fields (see below), is commonly called a ``hyperboloidal" data set in the literature. These have been studied in  
\cite{AnderssonChrusciel-Obstructions},
\cite{AnderssonChrusciel-Dissertationes},
\cite{AnderssonChruscielFriedrich},
\cite{GicquaudSakovich},
\cite{IsenbergLeeStavrov-AHP},
\cite{IsenbergPark}.
One finds that if a set of hyperboloidal data is to be used to generate an asymptotically flat spacetime with regular asymptotic conformal structure, then in addition to the usual constraint equations, the data must satisfy a boundary condition called the {\it shear-free condition} (discussed in Section \ref{Section-Preliminaries} below); see \cite{Frauendiener-ConformalInfinity}, \cite{MoncriefRinne-Regularity}, and especially \cite{AnderssonChrusciel-Obstructions}, where the issue of smooth conformal compactifications is studied in detail.

Because satisfying the shear-free condition is essential for a hyperboloidal initial data set to have any chance of generating a spacetime development that admits a conformal compactification at future null infinity, a more appropriate notion of ``hyperboloidal initial data" would include the shear-free condition as part of the definition. 
Unfortunately, many results concerning the existence of hyperboloidal initial data (\cite{AnderssonChrusciel-Dissertationes}, \cite{IsenbergPark}, \cite{GicquaudSakovich}, {et al.}) do not explicitly address the existence of data satisfying the shear-free condition.
In fact, Proposition 3.2 of \cite{AnderssonChrusciel-Obstructions} explicitly states that among those asymptotically hyperbolic solutions to the constraint equations constructed in \cite{AnderssonChrusciel-Dissertationes}  from smooth ``seed data'' (see the discussion in \S\ref{Section-ConformalMethod}), the shear-free condition is generically not satisfied.
(Note that the genericity result in \cite{AnderssonChrusciel-Obstructions} is with respect to the ``compactified'' $C^\infty(\bar M)$ topology; in \cite{SFDensity-Prelim} it is shown that shear-free data {\it is} dense in the ``physical'' $C^{k}(M)$ topology.)
One of our  purposes here is to clarify the existential status  of hyperboloidal data that does satisfy the shear-free condition.

In the present work we make a systematic study of hyperboloidal initial data sets satisfying the shear-free condition in the constant-mean-curvature (CMC) setting.
It is in particular among our goals to advertise and to clarify the role of the shear-free condition in the study of the Einstein constraint equations.
As part of this effort, we systematically incorporate the shear-free condition into the conformal method, obtaining a parametrization of the collection of  all sufficiently regular
 shear-free CMC hyperboloidal data sets satisfying the Einstein constraint equations.
As astrophysical systems typically contain matter fields as well as gravitational fields, we consider data sets that include electromagnetic and fluid source fields, with the data collectively satisfying the Einstein--Maxwell--fluid constraint equations.

A key tool in our analysis is the tensor $\B_{\bar g}(\rho)$, a conformally covariant version of the traceless Hessian (see \eqref{DefineB} below), which we have introduced in \cite{WAH-Preliminary}.
A hyperboloidal initial data set is shear-free if and only if the trace-free part of the second fundamental form agrees, to leading order, with $\B_{\bar g}(\rho)$ along $\partial M$.
The conformal covariance of $\B_{\bar g}(\rho)$ guarantees that this characterization of the shear-free condition is compatible with the conformal method, and can consequently be built directly into the choice of seed data, which we define below (see Section \ref{Section-ConformalMethod}).

Penrose's early studies \cite{Penrose-AsymptoticBehavior} of asymptotically flat spacetimes via conformal compactification required that the metric $\bar{\mathbf g}$ extend smoothly to $\mathscr I^\pm$.
Initial data sets with smooth conformal compactifications have been constructed via the conformal method in \cite{AnderssonChruscielFriedrich}.
However, large classes of hyperboloidal data constructed by the conformal method do not admit smooth compactifications---even if the ``seed data'' is smooth on $\bar M$, the resulting solution to the Einstein constraint equations may be polyhomogeneous, rather than smooth, along the boundary $\partial M$; see \cite{AnderssonChrusciel-Obstructions}, \cite{AnderssonChrusciel-Dissertationes}.
Roughly speaking, a tensor field $f$ on $M$ is polyhomogeneous if for any fixed smooth defining function $\rho$ on $\bar M$, the coordinate expression of $f$ has an asymptotic expansion at $\partial M$ in powers of $\rho$ and $\log\rho$.
Such fields are smooth on the interior $M$, but may be differentiable only to finite order on $\bar M$.
We refer the reader to the appendix of \cite{WAH-Preliminary} for additional details.

Motivated by the results of \cite{AnderssonChrusciel-Obstructions} and \cite{AnderssonChrusciel-Dissertationes}, we consider sets of seed data satisfying  weaker regularity conditions, namely that the metric satisfy the {\it weakly asymptotically hyperbolic} condition of \cite{WAH-Preliminary}.
The sectional curvatures of such metrics approach $-1$ at the conformal boundary but, in contrast to strongly asymptotically hyperbolic metrics, might only be $C^{1,1}$ conformally compact; for a detailed definition, see Section \ref{Section-DefineData}.
We are able to construct from such sets of seed data shear-free CMC solutions to the constraint equations that are weakly asymptotically hyperbolic.
Furthermore, if the seed data is polyhomogeneous, then the resulting shear-free data is also polyhomogeneous. 
Our methods may also be used to produce less regular asymptotically hyperbolic solutions to the constraint equations that, while not guaranteed to satisfy the shear-free condition, also need not be more than Lipschitz conformally compact; see Section \ref{Section-WeakExistence}.

Under the conformal method, the Einstein constraint equations reduce to a system of elliptic differential equations which, in the hyperboloidal setting, are uniformly degenerate.
Such operators have been systematically studied; see especially 
\cite{Mazzeo-Edge}, 
\cite{Melrose-TransformationMethods}, \cite{Melrose-Transformation}, \cite{Lee-FredholmOperators} and references therein.
We rely here on the results of \cite{WAH-Preliminary}, where the Fredholm results for elliptic operators are extended to the weakly asymptotically hyperbolic setting.

Our work below is organized as follows.
Following a discussion of the Einstein--Maxwell--fluid system and the shear-free condition, we carefully define several regularity classes of constant-mean-curvature shear-free hyperboloidal data sets in Section \ref{Section-DefineData}.
We next discuss the tensor $\B_{\bar g}(\rho)$ in Section \ref{Section-B}.
In \S\ref{Section-ConformalMethod} we outline the conformal method for constructing CMC hyperboloidal data sets.
We present in \S\ref{PDEResults} a number of PDE results that are needed to implement the conformal method; in \S\ref{PDE-Continuity} we show the continuity of these PDE results.
Then, in \S\ref{Section-PandE}, we carry out this implementation, obtaining the parametrization of hyperboloidal data by suitable seed data in Theorem \ref{ParametrizationTheorem}, and existence of such seed data in Theorem \ref{ExistenceProjection}.
Finally we show in \S\ref{Section-WeakExistence} that our methods also give rise to weakly asymptotically hyperbolic solutions that need not have sufficient boundary regularity for the shear-free condition to make sense.

\subsection*{Acknowledgement}
We thank Vincent Moncrief for helpful conversations.
This work was partially supported by NSF grants PHYS-1306441 and DMS-63431.

\section{Preliminaries}
\label{Section-Preliminaries}

\subsection{Einstein--Maxwell--fluid system}
\label{sect:EMF}

We study the  Einstein--Maxwell--perfect fluid system (in $3+1$ dimensions), 
in which the stress-energy tensor has the form
$
\mathbf T=\mathbf T^\text{Maxwell}+ \mathbf T^\text{fluid},
$
where $\mathbf T^\text{Maxwell}$ is determined  by the Maxwell $2$-form $\mathbf F$
via
\begin{equation}
\label{TMax}
\mathbf T^\text{Maxwell}_{\alpha\beta} = \mathbf F_{\alpha\mu}\mathbf F_{\beta\nu}\mathbf g^{\mu\nu}
-\frac14\left(\mathbf F_{\mu\nu}\mathbf F^{\mu\nu} \right)\mathbf g_{\alpha\beta},
\end{equation}
and $\mathbf T^\text{fluid}_{\alpha\beta} = p \mathbf g_{\alpha\beta} + (\mu + p) \mathbf u_\alpha \mathbf u_\beta$, where $\mathbf u$ is the unit timelike flow covector, $\mu\geq0$ is the energy density, and $p\geq 0$ is the pressure density. 
An \Defn{Einstein--Maxwell--perfect fluid solution} is a 
Lorentzian spacetime $(\mathbf M, \mathbf g)$, along with matter fields, 
satisfying 
\begin{equation}
\label{EinsteinMaxwellFluid}
\Ric[\mathbf g] - \frac{1}{2} \R[\mathbf g] \mathbf g = \mathbf T,
\end{equation}
together with Maxwell's equations
\begin{equation}
\label{MaxwellEquations}
\D \mathbf F =0
\quad \text{ and }\quad
\D(\ast \mathbf F) =0,
\end{equation}
and Euler's equations, which can be deduced from the conservation law
\begin{equation}
\label{ConsLaw}
\Div_{\mathbf g}\mathbf T^\text{fluid}=0.
\end{equation}
We refer the reader to \cite{ChoquetBruhat-GRBook} for details regarding these matter models.

In order to study the initial value problem for the system \eqref{EinsteinMaxwellFluid}-\eqref{TMax}, we focus on spacetime manifolds of the form  $\mathbf M = \mathbb R \times M$ for some smooth $3$-manifold $M$. Let $t$ be the coordinate on $\mathbb R$, so the slices $M_t=\{t\}\times M$ form a foliation of $\mathbf M$ whose leaves are diffeomorphic to $M$.
We assume that the metric induced on each leaf is Riemannian. 

Let $g$ denote the induced metric on each leaf $M_t$, and let
$K$ be the second fundamental form and $\tau = \tr_gK$ be the corresponding mean curvature 
of each leaf, all considered as time-dependent tensors on $M$. 
Note our sign convention for $K$: If $\mathbf D$ is the connection associated to $\mathbf g$, then we have $K(A,B) = \mathbf g(\mathbf{D}_AB,n)$.
The evolution vector $\partial_t$ need not be  orthogonal to the leaves of the foliation, but rather can be written as $\partial_t = Nn + X$, 
where $n$ is the future-pointing unit timelike normal to $ M_t\subset \mathbf M$ (i.e., ~$\mathbf g(n,n)=-1$, 
$\mathbf g(\partial_t, n)<0$
and $\mathbf g(n,W) =0$ if $W$ is tangent to $M_t$), $N>0$ 
is the {\it lapse function}, and the {\it shift vector} $X$ is tangent to each leaf.

We express the Einstein--Maxwell--fluid equations with respect to this product structure as follows.
Let $\iota_t\colon M \hookrightarrow \mathbf M$ be the inclusion taking $M$ to $M_t\subset \mathbf M$. 
We decompose the matter fields with respect to the foliation, setting
\begin{equation}
\begin{gathered}
E = \iota_t^*\left(\mathbf F (n, \cdot)\right)^\sharp,
\quad
B = \iota_t^*\left(\ast \mathbf F (n,\cdot)\right)^\sharp,
\\
\xi = \iota_t^*\left(\mathbf T^\text{fluid}(n,n)\right),
\quad
J = \iota_t^*\left(\mathbf T^\text{fluid}(n,\cdot)\right)^\sharp,
\end{gathered}
\end{equation}
where $\sharp$ is with respect to the time-dependent metric $g$ on $M$.

Expressed with respect to the foliation given by $t$, the Einstein field equations consist of the evolution equations for $g$ and $K$
\begin{align}
\label{Evolve-g}
\partial_t g &= -2NK + \mathcal L_Xg
\\
\label{Evolve-K}
\partial_tK&= N\Ric[g] +\mathcal L_XK + N\tau K -2N K\ast K - \Hess_gN 
-2N S,
\end{align}
together with the constraint equations
\begin{gather}
R[g] - |K|^2_g + \tau^2 = |E|^2_g + |B|^2_g + 2\xi
\label{HamiltonianConstraint}
\\
\Div_gK - \D\tau = \left(E\times_g\!B+ J\right)^\flat,  
\label{MomentumConstraint}
\end{gather}
which hold on each leaf of the foliation.
Here we define $K\ast K$ by $K\ast K(A,B) = \tr_g K(A,\cdot)\otimes K(B,\cdot)$, and the source term $S$ in \eqref{Evolve-K} is the restriction of $\mathbf T$ to $TM\times TM$.

The Maxwell equations \eqref{MaxwellEquations} give rise to evolution equations for $E$ and $B$, as well as to the Maxwell constraints
\begin{equation}
\label{MaxwellConstraints}
\Div_gE =0,
\quad\text{ and }\quad
\Div_gB=0;
\end{equation}
the evolution equations for the fluid are simply Euler's equations, which are not subject to constraints. 
Data for the initial value problem thus consists of $g$, $K$, $E$, $B$, $\xi$, and $J$, all defined on $M$ and satisfying \eqref{HamiltonianConstraint}, \eqref{MomentumConstraint}, and \eqref{MaxwellConstraints}.

\subsection{Compactification and the shear-free condition}
The discussion in Section \ref{sect:EMF} of the decomposition of the metric and other fields with respect to a spacelike foliation, and the resulting decomposition of the field equations into constraints and evolution equations, hold for any spacelike foliation of any spacetime, regardless of their asymptotic properties. 
We now presume that the spacetime is asymptotically flat and investigate the behavior of the field equations under conformal compactification of the geometry.

Writing $\mathbf{M}$ as the interior of $\mathbf{\bar M}$ and setting $\mathbf g = \Omega^{-2}\mathbf{\bar g}$, we find that if the leaves of a foliation of $\mathbf{M}$ approach  $\mathscr I^+$ transversely, then there is a corresponding foliation of $\mathbf{\bar M}$ such that each leaf $M$ of the former foliation is the interior of a compact manifold $\bar M$ (with boundary $\partial M$) which is a leaf of the latter foliation. Moreover, the metric  $g$ induced on $M$ can be expressed as 
\begin{equation}
\label{ConvertMetric}
g = \Omega^{-2}\bar g,
\end{equation}
with $\bar g$ the restriction of $\mathbf{\bar g}$ to $\bar M$.
The assumption that the foliation approaches $\mathscr I^+$ transversely implies that $\Omega$ plays the role of a defining function for $M \subset \bar M$. 
We assume that 
$|\D\Omega|^2_{\bar g} =1$ along $\partial M$, and that consequently $(M,g)$ is asymptotically hyperbolic.

Analogous to the discussion in \S \ref{sect:EMF}, one can decompose $\partial_t$ with respect to $\mathbf{\bar g}$, yielding $\partial_t = \bar N \bar n + \bar X$; one easily sees that $\bar n = \Omega^{-1}n$, and hence $\bar N = \Omega N$ and $\bar X = X$.
Let $\bar K$ be the second fundamental form of $(\bar M,\bar g)\subset (\mathbf{\bar M},\mathbf{\bar g})$ and $\bar \tau = \tr_{\bar g}\bar K$ be the associated mean curvature.
Letting $\Sigma$ be the traceless part of $K$, and $\bar\Sigma$ the traceless part of $\bar K$, we have
\begin{equation}
\label{DecomposeK}
K = \Sigma + \frac{\tau}{3} g
= \Omega^{-1}\bar\Sigma + \frac{\tau}{3}\Omega^{-2} \bar g
\end{equation}
and
\begin{equation}
\label{RelateHtoTau}
\frac{\bar\tau}{3}
= \Omega^{-1}\frac{\tau}{3} - \bar n(\log\Omega).
\end{equation}

Substituting \eqref{ConvertMetric} and \eqref{DecomposeK} into the constraint equations \eqref{HamiltonianConstraint} and \eqref{MomentumConstraint} we obtain the following:
\begin{equation}
\label{FirstLichOmega}
4\Omega \Delta_{\bar g}\Omega 
+ (R[\bar g] - |\bar\Sigma|^2_{\bar g})\Omega^2 
+ 6\left(\frac{\tau^2}{9} - |\D\Omega|^2_{\bar g} \right)
= 
| E|^2_{ g} + | B|^2_{ g} + 2\xi,
\end{equation}
\begin{equation}
\label{FirstDivSigmaBar}
\Omega \Div_{\bar g}\bar\Sigma 
- 2\bar\Sigma(\grad_{\bar g}\Omega, \cdot)
-\frac23\Omega^{-1}\D\tau
=\left( E\times_{ g}\! B +  J \right)^\flat;
\end{equation}
here our sign convention for the Laplacian is $\Delta_{\bar g}  = \Div_{\bar g}\circ\grad_{\bar g} = \tr_{\bar g}\Hess_{\bar g}$ and the symbol $\flat$  on the right side of \eqref{FirstDivSigmaBar} is with respect to $g$.

We assume that $|E|^2_g$, $|B|^2_g$, $|J|_g$, and $\xi$ fall off like $\mathcal O(\Omega^2)$.
Thus if $\bar g\in C^2(\bar M)$, if $\bar\Sigma\in C^1(\bar M)$, and if $\Omega\in C^2(\bar M)$ satisfies 
\begin{equation}
\label{LichOmegaBC}
\Omega\Big|_{\partial M} =0,
\quad\text{ and }\quad
|\D\Omega|_{\bar g}^2\Big|_{\partial M} =1,
\end{equation}
then 
\eqref{FirstLichOmega} is consistent on $\bar M$ if and only if we require that $\tau^2=9$ on $\partial M$.
Consequently, imposing the constant-mean-curvature (CMC) condition requires $\tau^2 = 9$.

It follows from the condition that $\partial_t$ be tangential to $\mathscr I^+$ and future-directed that one has $X(\Omega) = \bar g(X,\grad_{\bar g}\Omega) >0$ on $\partial M$. 
From \eqref{RelateHtoTau} we have $\tau = \Omega \bar\tau + 3\bar N^{-1}( \partial_t\Omega - X(\Omega))$; thus on $\partial M$ we have $\tau = -3\bar N^{-1}X(\Omega)<0$.
Consequently the CMC condition becomes $\tau = -3$.

We also note that in the CMC setting \eqref{FirstDivSigmaBar} implies that $\bar\Sigma(\grad_{\bar g}\Omega, \cdot) =0$ at $\partial M$.

The constraint system for hyperboloidal data has often been viewed as consisting of \eqref{HamiltonianConstraint}-\eqref{MomentumConstraint} 
together with \eqref{MaxwellConstraints}, the asymptotically hyperbolic condition on $g$, the boundary conditions \eqref{LichOmegaBC} along $\partial M$, and suitable fall-off conditions on $\bar \Sigma$  and on the non-gravitational fields. However, solutions of this system do not generally evolve into asymptotically flat spacetimes with a well-defined $\mathscr I^+$ unless a further boundary condition is imposed. 
To motivate this condition, we write out the evolution equation for $\bar \Sigma$ (calculated using \eqref{Evolve-g} and  \eqref{Evolve-K}):
\begin{multline}
\label{ShearfreeMotivate}
\partial_t\bar\Sigma =\mathcal L_{\bar X}\bar\Sigma
 -2\bar N \,\bar\Sigma\ast\bar\Sigma 
-\bar N\frac{H}{3}\bar\Sigma
+\bar N\left(\Ric[\bar g]-\frac 13R[\bar g]\,\bar g\right)  
\\
-\left(\Hess_{\bar g} \bar N -\frac13(\Delta_{\bar g}\bar N)\bar g\right)\\
\quad + 2\Omega^{-1}\left(\Hess_{\bar g}\Omega -\frac13(\Delta_{\bar g}\Omega)\bar g -\bar\Sigma \right) 
-\bar N S.
\end{multline}

It follows that if the evolution is to be nonsingular on $\bar M$, and if the resulting spacetime is to satisfy 
$\mathbf{\bar g}\in C^{1,1}(\mathbf{\bar M})$, then the initial data must satisfy 
\begin{equation}
\label{FirstShearFree}
\left.\bar\Sigma\right|_{\partial M} = \left[\Hess_{\bar g}\Omega -\frac13(\Delta_{\bar g}\Omega)\bar g\right]_{\partial M}.
\end{equation}
This boundary  condition \eqref{FirstShearFree} is known in the literature as the \Defn{shear-free condition}. It is worth emphasizing that the shear-free condition is {\it not} affected by the presence of matter fields.

To understand the geometric content of the shear-free condition, we start by noting that it follows from the  constraint \eqref{FirstLichOmega}  that  
$|\D\Omega|^2_{\bar g} = 1 + \frac23 \Omega \Delta_{\bar g}\Omega + \mathcal O(\Omega^2)$ as $\Omega \to 0$.
Taking the derivative of both sides of this equation and combining with the identity  
$\Hess_{\bar g}\Omega(\grad_{\bar g}\Omega, \cdot) = \frac12\D (|\D\Omega|^2_{\bar g})$
yields
$
\Hess_{\bar g}\Omega(\grad_{\bar g}\Omega, \cdot) - \frac13(\Delta_{\bar g}\Omega)\D\Omega =0
$
along the boundary. Evaluating the momentum constraint \eqref{FirstDivSigmaBar} at $\partial M$, we have $\bar\Sigma(\grad_{\bar g}\Omega, \cdot) =0$. Thus
\begin{equation}
\label{ShearFree-Transverse}
\bar\Sigma(\grad_{\bar g}\Omega, \cdot) = \Hess_{\bar g}\Omega(\grad_{\bar g}\Omega, \cdot) - \frac13(\Delta_{\bar g}\Omega)\bar g(\grad_{\bar g}\Omega, \cdot)
\end{equation}
holds trivially along $\partial M$. This tells us that the only nontrivial content of \eqref{FirstShearFree} lies in its restriction to $T(\partial M)\times T(\partial M)$.

Let $\chi_{\bar g}$ be the second fundamental form of $\partial M \subset \bar M$ with respect to the inward normal $\grad_{\bar g}\Omega$, and let $\widehat\chi_{\bar g}$ be its trace-free part.
The shear-free condition \eqref{FirstShearFree} then reduces simply to
\begin{equation}
\label{ShearFree-Tangential}
\left.\bar\Sigma\right|_{T(\partial M)\times T(\partial M)} + \widehat\chi_{\bar g}=0.
\end{equation}

The null hypersurface $\mathscr I^+$ is foliated by null $\overline{\mathbf g}$-geodesics, 
generated by a certain null vector field $k$ (see \cite[Chap.~11]{Wald}).
The \Defn{shear} of $\mathscr I^+$ is the trace-free part of the Lie derivative $\mathcal L_k \big({\left.\overline {\mathbf g}\right|}_{T(\partial M)}\big)$;
it measures the extent to which the flow of $k$
deforms spheres of the geometry of $\partial M$ towards ellipsoids. 
Because $k$ can be expressed as the sum of the unit future-timelike
and unit inward-spacelike vectors, 
it follows that the shear vanishes if and only if 
$\left.\bar\Sigma\right|_{T(\partial M)\times T(\partial M)}+\widehat\chi_{\bar g}$ does.

The shear-free condition plays a central role in \cite{AnderssonChrusciel-Obstructions}, where the non-vanishing of the shear of initial data is identified as an obstruction to the existence of polyhomogeneous $\mathscr I$ for any spacetime which might arise from such data; it also plays an important role in the formal computations of \cite{MoncriefRinne-Regularity}.
These results suggest that imposing the shear-free condition is necessary, and may be sufficient, for constructing spacetimes admitting $\mathscr I$ with at least some minimal regularity.
Unfortunately, as noted in the introduction, many results concerning the existence of hyperboloidal initial data do not explicitly address the existence of data satisfying the shear-free condition.
One of our  purposes here is to clarify the situation regarding asymptotically hyperbolic data which does satisfy the shear-free condition.
A first step in providing such clarity is the formulation of a careful definition of such data.

\section{Hyperboloidal initial data}
\label{Section-DefineData}
Observe that the conformal compactification of an asymptotically hyperbolic manifold $(M,g)$ is not unique:
If $g = \Omega^{-2}\bar g$ for some defining function $\Omega$, and if $\omega \in C^1(\bar M)$ is positive, then 
\begin{equation}
|\D(\omega\Omega)|^2_{\omega^2\bar g}\Big|_{\partial M}
=
|\D\Omega|^2_{\bar g}\Big|_{\partial M}.
\end{equation}  
In fact, if $\Omega$, $\bar g$, $\bar\Sigma$, and a collection of matter fields $\Phi = ( E, B, J, \xi)$ give rise, via \eqref{ConvertMetric} and \eqref{DecomposeK}, to a solution to the constraint equations \eqref{HamiltonianConstraint}--\eqref{MaxwellConstraints}, then  $\omega \Omega$, $\omega^2\bar g$, $\omega\bar\Sigma$ give rise to the same solution.

In view of these observations, we fix once and for all a smooth defining function $\rho$, which we use in place of $\Omega$ for the purposes of conformal compactification, and for indicating the regularity of various fields along $\partial M$.

We furthermore fix a smooth background metric $\bar h$ on $\bar M$ such that $|\D\rho|_{\bar h} = 1$ along $\partial M$, and let $h = \rho^{-2}\bar h$ be the corresponding asymptotically hyperbolic metric on $M$.
We denote by $\bar\nabla$ and by $\nabla$ the Levi-Civita connections associated to $\bar h$ and to $h$, respectively.
The connection associated to any other metric $g$ is denoted by $\Nabla{g}$.

The metrics $\bar h$ and $ h$, together with $\rho$, are used to define various regularity classes of tensor fields, including the weighted H\"older spaces $C^{k,\alpha}_\delta(M) = \rho^{\delta}C^{k,\alpha}(M)$, as well as $C^k_\phg(\bar M)$, those polyhomogeneous tensor fields on $M$ with extensions to  $\bar M$ of class $C^k$. 
Our convention is that norms for function spaces on $\bar M$ are computed with respect to $\bar h$, and norms for function spaces on $M$ are computed with respect to $h$.
The reader is referred to \cite{WAH-Preliminary} and \cite{Lee-FredholmOperators} for details regarding these spaces.

Note that if $u$ is a tensor field of covariant rank $p$ and contravariant rank $q$, then 
\begin{equation}
\label{ConvertToBar}
|u|_h = \rho^{r}|u|_{\bar h},
\end{equation}
where $r = p-q$ is the \Defn{weight} of the tensor $u$.
Consequently, if $u\in C^{0}_{r}(M)$, then $u\in L^\infty(\bar M)$.
In \cite{WAH-Preliminary}, we introduce for $0\leq m \leq k$ and $\alpha\in [0,1)$ the spaces $\mathscr C^{k,\alpha;m}(M)$ of tensor fields $u$ such that if $r$ is the weight of $u$ then
\begin{equation}
\label{DefineTripleNorm}
\gNorm{u}_{k,\alpha;m}
:= \sum_{l=0}^m \| \bar\nabla{}^l u\|_{C^{k-l,\alpha}_{r+l}(M)}
\end{equation}
is finite.
These spaces are intermediate between $C^{k,\alpha}_r(M)$ and $C^{k,\alpha}(\bar M)$.

In view of \eqref{ConvertToBar}, if $\gNorm{u}_{k,\alpha;m}$ is finite then $|u|_{\bar h},|\bar\nabla u|_{\bar h}, \dots, |\bar\nabla{}^m u|_{\bar h}$ are bounded.
Thus if $u\in \mathscr C^{k,\alpha;m}(M)$, then $u$ extends to a tensor field on $\bar M$ of class $C^{m-1,1}(\bar M)$.

Throughout this paper we adopt the convention that if $g$ is a metric on $M$, then $\bar g$ is the metric given by $\bar g = \rho^2 g$;
in the situations considered below, it is always the case that $\bar g$ extends continuously to a metric $\bar M$, which we call the \Defn{conformal compactification of $g$}.
We define a metric $g$ on $M$ to be \Defn{weakly $C^{k,\alpha}$ asymptotically hyperbolic} if $\bar g \in \mathscr C^{k,\alpha;1}(M)$ is non-degenerate on $\bar M$ and if $|\D\rho|_{\bar g} = 1$ along $\partial M$.
If $g$ is weakly $C^{1,0}$ asymptotically hyperbolic, then $\bar g$ extends to a Lipschitz continuous metric on $\bar M$.
We denote the collection of weakly $C^{k,\alpha}$ asymptotically hyperbolic metrics on $M$ by $\WAH^{k,\alpha;1}$.
In \cite{WAH-Preliminary}, we establish Fredholm results for elliptic operators defined using metrics in $\WAH^{k,\alpha;1}$; see Theorem \ref{Fred} below.

Metrics in $\WAH^{k,\alpha;1}$ have sufficient regularity for the elliptic theory needed to construct solutions to the constraint equations (see Theorem \ref{WeakExistenceTheorem} below), but this regularity is not sufficient for analyzing the shear-free condition.
Thus we seek a space of metrics for which the shear-free condition may be defined.
We furthermore require that the space have a topology that is sufficiently strong  for the shear-free condition to be a closed condition, yet  weak enough to allow the conformal structures at infinity of shear-free initial data sets to vary continuously.

The density result of \cite{SFDensity-Prelim} shows in the vacuum setting that the topology induced by the unweighted H\"older spaces is too weak for the shear-free condition to be closed.
In order to gain some intuition for this result, let us suppose that $g\in C^{k,\alpha}(M)$ for some $k\geq 2$ and $\alpha\in [0,1)$, and thus that $\bar g = \rho^2 g \in C^{k,\alpha}_2(M)$ and $(\bar g)^{-1}\in C^{k,\alpha}_{-2}(M)$.
The trace-free Hessian appearing in \eqref{FirstShearFree}, with $\Omega$ replaced by $\rho$, is given schematically by contractions of
\begin{equation}
\label{HessianScheme}
 (\bar g)^{-1} \otimes \bar g\otimes  \bar\nabla\D\rho
\quad\text{ and }\quad
 (\bar g)^{-1} \otimes  (\bar g)^{-1} \otimes \bar g\otimes  \bar\nabla \bar g\otimes  \D\rho.
\end{equation}
As $\D\rho\in C^{k,\alpha}_1(M)$ and $\bar\nabla \D\rho\in C^{k,\alpha}_2(M)$ for all $k$ and $\alpha$, we see that $C^{k,\alpha}$ control of $g$ yields an estimate for the trace-free Hessian of $\rho$ in $C^{k-1,\alpha}_1(M)$.
However, as  a  covariant $2$-tensor field, an estimate of the trace-free Hessian of $\rho$ in $L^\infty(\bar M)$ requires an estimate in $C^{k-1,\alpha}_2(M)$; cf.~\eqref{ConvertToBar}.
Furthermore, in order for the trace-free Hessian to be defined pointwise on $\partial M$, we need even more regularity.

Based on these observations, one might be tempted to simply increase the weight, and measure variations of $g$ in the $C^{k,\alpha}_1$ norm.
However, if a sequence  of asymptotically hyperbolic metrics $g_i$ is to converge to $g$ in $C^{k,\alpha}_1(M)$ it is necessary that each $\bar g_i = \rho^2g_i$ agree with $\rho^2 g$ along $\partial M$.
Thus this weighted topology is too strong to allow the conformal structure at infinity to vary continuously.

We note, however, that terms of the first type in \eqref{HessianScheme} can be estimated in $C^{k-1,\alpha}_2(M)$ if $g\in C^{k,\alpha}(M)$; it is only terms of the second type, which involve derivatives of the metric, that lead to a loss of weight.
Such terms, however, are bounded if $g\in \WAH^{k,\alpha;1}$.
We can furthermore ensure that the traceless Hessian extends to a Lipschitz continuous tensor on $\bar M$ if we require $\bar g \in \mathscr C^{k,\alpha;2}(M)$.
Thus we define $\WAH^{k,\alpha;2}$ to be those metrics $g\in \WAH^{k,\alpha;1}$ with $\bar g \in \mathscr C^{k,\alpha;2}(M)$.
The preceding discussion is summarized by the following.
\begin{lemma}
Suppose $k\geq 2$ and $\alpha \in [0,1)$.
\begin{enumerate}
\item If $g\in \WAH^{k,\alpha;1}$ then 
$
\Hess_{\bar g}\rho - \frac13 (\Delta_{\bar g}\rho)\bar g \in L^\infty(\bar M).
$
\item If $g\in \WAH^{k,\alpha;2}$ then 
$
\Hess_{\bar g}\rho - \frac13 (\Delta_{\bar g}\rho)\bar g \in C^{0,1}(\bar M).
$
\end{enumerate}
\end{lemma}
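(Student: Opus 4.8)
The plan is to rewrite the trace-free Hessian $\Hess_{\bar g}\rho-\tfrac13(\Delta_{\bar g}\rho)\bar g$, which we denote $\TFHess_{\bar g}\rho$, in terms of the \emph{fixed} background connection $\bar\nabla$, and then to read its regularity off from the definitions of $\WAH^{k,\alpha;1}$ and $\WAH^{k,\alpha;2}$ together with the elementary multiplicative properties of the weighted H\"older spaces $C^{k,\alpha}_\delta(M)$. The algebraic input is the difference tensor $A:=\DifferenceTensor{\bar g}=\Nabla{\bar g}-\bar\nabla$, the $\End(TM)$-valued $1$-form determined by $\bar g\bigl(A(X,Y),Z\bigr)=\tfrac12\bigl((\bar\nabla_X\bar g)(Y,Z)+(\bar\nabla_Y\bar g)(X,Z)-(\bar\nabla_Z\bar g)(X,Y)\bigr)$, schematically $A=(\bar g)^{-1}\ast\bar\nabla\bar g$, where $\ast$ abbreviates a tensor product followed by metric-independent contractions. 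From $\Nabla{\bar g}_XY=\bar\nabla_XY+A(X,Y)$ one has $\Hess_{\bar g}\rho(X,Y)=\bar\nabla\D\rho(X,Y)-\D\rho\bigl(A(X,Y)\bigr)$; taking $\bar g$-traces and substituting for $A$ then reproduces exactly the schematic expression \eqref{HessianScheme} for $\TFHess_{\bar g}\rho$, namely a sum of contractions of $(\bar g)^{-1}$, $\bar g$, $\bar\nabla\bar g$, $\D\rho$, and $\bar\nabla\D\rho$.

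For part (a) I would track weights, recalling from \eqref{ConvertToBar} that a $(p,q)$-tensor is bounded in $|\cdot|_{\bar h}$ precisely when it lies in $C^0_{p-q}(M)$, and that $C^{j_1,\alpha}_{\delta_1}(M)\cdot C^{j_2,\alpha}_{\delta_2}(M)\subset C^{\min(j_1,j_2),\alpha}_{\delta_1+\delta_2}(M)$ while contractions preserve weights. Since $\rho$ is smooth on $\bar M$ we have $\D\rho\in C^{k,\alpha}_1(M)$ and $\bar\nabla\D\rho\in C^{k,\alpha}_2(M)$ for every $k,\alpha$; the hypothesis $g\in\WAH^{k,\alpha;1}$, i.e.\ $\bar g\in\mathscr C^{k,\alpha;1}(M)$, supplies \emph{both} $\bar g\in C^{k,\alpha}_2(M)$ and $\bar\nabla\bar g\in C^{k-1,\alpha}_3(M)$; and non-degeneracy of $\bar g$ on the compact manifold $\bar M$ gives $(\bar g)^{-1}\in C^{k,\alpha}_{-2}(M)$ (see \cite{WAH-Preliminary} for the algebra properties of these spaces). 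The essential use of the $\WAH$ condition is the bound on $\bar\nabla\bar g$ at the natural weight $3$ — unavailable from $g\in C^{k,\alpha}(M)$ alone — which makes $A\in C^{k-1,\alpha}_1(M)$, again at its natural weight. It then follows term by term that $\TFHess_{\bar g}\rho\in C^{k-1,\alpha}_2(M)$; since $2$ is the natural weight of a covariant $2$-tensor, $|\TFHess_{\bar g}\rho|_{\bar h}$ is bounded, and since $k-1\geq1$ the tensor is continuous on $M$, so it represents an element of $L^\infty(\bar M)$.

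For part (b) the plan is to differentiate the schematic formula once more with $\bar\nabla$ and apply Leibniz. The only genuinely new factor is $\bar\nabla^2\bar g$, which under $g\in\WAH^{k,\alpha;2}$ — i.e.\ $\bar g\in\mathscr C^{k,\alpha;2}(M)$ — lies in $C^{k-2,\alpha}_4(M)$ at its natural weight; the remaining new factors ($\bar\nabla(\bar g)^{-1}=-(\bar g)^{-1}\ast\bar\nabla\bar g\ast(\bar g)^{-1}\in C^{k-1,\alpha}_{-1}(M)$, $\bar\nabla^2\D\rho\in C^{k,\alpha}_3(M)$, and so on) are likewise at their natural weights. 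Carrying the weight count through each Leibniz term gives $\bar\nabla\bigl(\TFHess_{\bar g}\rho\bigr)\in C^{k-2,\alpha}_3(M)$, the natural weight of a covariant $3$-tensor. Together with part (a), this is exactly the assertion $\TFHess_{\bar g}\rho\in\mathscr C^{k-1,\alpha;1}(M)$ — both summands of $\gNorm{\TFHess_{\bar g}\rho}_{k-1,\alpha;1}=\|\TFHess_{\bar g}\rho\|_{C^{k-1,\alpha}_2(M)}+\|\bar\nabla\TFHess_{\bar g}\rho\|_{C^{k-2,\alpha}_3(M)}$ being finite, which needs only $k\geq2$ — and the extension principle recalled before the lemma, applied with $m=1$, then yields $\TFHess_{\bar g}\rho\in C^{0,1}(\bar M)$.

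I expect the weight bookkeeping of the last two paragraphs to be the main obstacle: one must check that although intermediate factors such as $\bar\nabla\bar g$ and $\bar\nabla^2\bar g$ carry weights $3$ and $4$ rather than the weight $2$ of the target tensor, every derivative of $\bar g$ enters only contracted against $(\bar g)^{-1}$ — always inside $A$ or $\bar\nabla A$ — so that each term lands at the natural weight and no loss of weight occurs. The structural reason, visible already in \eqref{HessianScheme}, is that $\TFHess_{\bar g}\rho$ depends on $\bar g$ solely through $A$ and the trace $\tr_{\bar g}$; once the $\WAH$ hypotheses are in force, $A\in C^{k-1,\alpha}_1(M)$ and, for (b), $\bar\nabla A\in C^{k-2,\alpha}_2(M)$, and everything else is routine multiplication in the weighted H\"older calculus.
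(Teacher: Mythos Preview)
Your proposal is correct and follows essentially the same approach as the paper. The paper does not give a separate proof of this lemma but presents it as a summary of the preceding discussion, which is precisely the schematic decomposition \eqref{HessianScheme} together with the weight-counting you carry out; your write-up simply makes that discussion explicit via the difference tensor $A=\Nabla{\bar g}-\bar\nabla$ and then verifies the $\mathscr C^{k-1,\alpha;1}$ membership needed to invoke the $C^{m-1,1}$ extension principle.
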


We now define hyperboloidal initial data sets as follows.
\begin{defn}
\label{DefineCMCSFAH}
Let $k\geq 2$ and $\alpha \in (0,1)$.
A set of fields $(g,K,\Phi)$ constitutes a \Defn{constant-mean-curvature shear-free hyperboloidal initial data set of class $C^{k,\alpha}$} on $M$ if 
\begin{enumerate}
\item  \label{DD-g}
$g\in \WAH^{k,\alpha;2}$;  

\item \label{DD-K}
$K$ is a symmetric covariant $2$-tensor field of the form $K = \Sigma - g$ with $\Sigma$  traceless and $\bar\Sigma=\rho\Sigma\in \mathscr C^{k-1,\alpha;1}(M)$;

\item
\label{DD-Phi}
 $\Phi = (E,B,J,\xi)$ is a collection of matter fields satisying
 \begin{equation}
\label{MatterRegularity}
\Phi\in 
C^{k-2,\alpha}_1(M) \times C^{k-2,\alpha}_1(M) \times C^{k-2,\alpha}_2(M) \times C^{k-2,\alpha}_2(M)
\end{equation}
 and such that $\xi\geq 0$;

\item 
\label{DD-Constraints}
$g$, $K$, and $\Phi$ satisfy the constraint equations \eqref{HamiltonianConstraint}--\eqref{MaxwellConstraints}; and

\item 
\label{DD-BC}
the shear-free condition \eqref{FirstShearFree} is satisfied with $\Omega = \rho$.

\end{enumerate}
We denote  the collection of such initial data sets by $\mathscr D^{k,\alpha}$, and distinguish the following subsets:

\begin{itemize}
\item We define the set $\mathscr D^\infty$ of \Defn{smooth initial data sets } to be those data sets in $\mathscr D^{k,\alpha}$ for all $k\geq 2$ and all $\alpha$.

\item The set $\mathscr D_\phg$ of  \Defn{polyhomogeneous initial data sets} are those for which
\begin{equation}
\label{PhysicalRegularity}
(\rho^2 g, \rho\Sigma,\rho\odot\Phi)\in C^2_\phg(\bar M)\times C^1_\phg(\bar M)\times C^0_\phg(\bar M),
\end{equation}
where we define $\rho \odot\Phi $ by
\begin{equation}
\rho\odot ( E,  B, J, \xi) 
:= (\rho^{-3} E, \rho^{-3} B,\rho^{-5} J, \rho^{-4}\xi).
\end{equation}

\end{itemize}

\end{defn}

Below we discuss the parametrization and construction of data sets  via the conformal method. Before doing this, we introduce an alternate characterization of the shear-free condition \eqref{FirstShearFree} that is particularly well suited for use with the conformal method.

\section{The tensor $\B_{\bar g}(\rho)$}
\label{Section-B}

While  the condition \eqref{FirstShearFree} precisely characterizes the shear-free condition, in order to incorporate the condition into the construction of hyperboloidal data,  it is useful to have an alternative characterization in terms of quantities that are conformally invariant.
We accomplish this making use of the tensor field $\B_{\bar g}(\rho)$, defined in \cite{WAH-Preliminary} by
\begin{equation}
\label{DefineB}
\B_{\bar g}(\rho)
:=|\D\rho|_{\bar g}^6\,\mathcal D_{\bar g}(|\D\rho|^{-2}_{\bar g} \grad_{\bar g}\rho)
+ A_{\bar g}(\rho) \left( \D\rho \otimes \D\rho - \frac{1}{3}|\D\rho|^2_{\bar g} \bar g \right),
\end{equation}
where
\begin{equation}
A_{\bar g}(\rho) 
:= \frac{1}{2} |\D\rho|_{\bar g} \Div_{\bar g}\left[ |\D\rho|_{\bar g}\grad_{\bar g}\rho\right]
\end{equation}
and $\mathcal D_{\bar g}$ is the \Defn{conformal Killing \(or Alhfors\) operator} taking a vector field $X$ to a symmetric and tracefree covariant $2$-tensor field by
\begin{equation}
\label{DefineAlhforsOperator}
\mathcal D_{\bar g}X = \frac12\mathcal L_{X}\bar g - \frac{1}{3}(\Div_{\bar g}X)\bar g.
\end{equation}
A vector field $X$ such that $\mathcal D_{\bar g}X =0$ is called a \Defn{conformal Killing vector field}.

The tensor field $\B_{\bar g}(\rho)$ is a conformally invariant version of the trace-free Hessian. 
We recall the following results from \cite{WAH-Preliminary}.

\begin{proposition}[Proposition 4.1 of \cite{WAH-Preliminary}]
\hfill
\label{B-BasicProperties}
\begin{enumerate}
\item $\B_{\bar g}(\rho)$ is symmetric and trace-free.

\item\label{B-TransverseProperty} $\B_{\bar g}(\rho)(\grad_{\bar g}\rho, \cdot)=0$.

\item\label{B-Scaling} $\B_{\bar g}(c \rho)=c^5\B_{\bar g}(\rho)$ for all constants $c$.

\item\label{B-ConformalScaling} If $\theta$ is a strictly positive $C^1$ function then 
$\B_{\theta^4\bar g}(\rho)=\theta^{-8}\B_{\bar g}(\rho)$
and $A_{\theta^4\bar g}(\rho) = \theta^{-8}A_{\bar g}(\rho)$.
\end{enumerate}
\end{proposition}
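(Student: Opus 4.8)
The plan is to dispose of parts (a) and (c) by inspection, to derive (d) from the standard conformal transformation laws for the divergence and for the conformal Killing operator, and finally to verify (b) by a direct computation, which is the only part that requires real work.

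Part (a) is immediate from the structure of \eqref{DefineB}: both summands are manifestly symmetric, and each is separately trace-free, since in dimension three $\tr_{\bar g}\mathcal D_{\bar g}X = \Div_{\bar g}X - \tfrac13(\Div_{\bar g}X)\tr_{\bar g}\bar g = 0$ for any vector field $X$, while $\tr_{\bar g}\bigl(\D\rho\otimes\D\rho - \tfrac13|\D\rho|_{\bar g}^2\,\bar g\bigr) = |\D\rho|_{\bar g}^2 - |\D\rho|_{\bar g}^2 = 0$. For (c) I would simply track homogeneity under $\rho\mapsto c\rho$: since $\grad_{\bar g}(c\rho) = c\,\grad_{\bar g}\rho$ and $|\D(c\rho)|_{\bar g}^2 = c^2|\D\rho|_{\bar g}^2$, the normalized gradient obeys $|\D(c\rho)|_{\bar g}^{-2}\grad_{\bar g}(c\rho) = c^{-1}\bigl(|\D\rho|_{\bar g}^{-2}\grad_{\bar g}\rho\bigr)$, so by linearity of $\mathcal D_{\bar g}$ the first summand of \eqref{DefineB} scales by $c^{6}\cdot c^{-1} = c^{5}$; likewise $A_{\bar g}(c\rho) = c^{3}A_{\bar g}(\rho)$ and $\D(c\rho)\otimes\D(c\rho) - \tfrac13|\D(c\rho)|_{\bar g}^2\,\bar g = c^{2}\bigl(\D\rho\otimes\D\rho - \tfrac13|\D\rho|_{\bar g}^2\,\bar g\bigr)$, so the second summand also scales by $c^{5}$.

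For (d), write $\tilde g = \theta^{4}\bar g$. The two ingredients I would use are: (i) $|\D\rho|_{\tilde g}^{2} = \theta^{-4}|\D\rho|_{\bar g}^{2}$ and $\grad_{\tilde g}\rho = \theta^{-4}\grad_{\bar g}\rho$, so that both the vector field $U := |\D\rho|_{\bar g}^{-2}\grad_{\bar g}\rho$ and the trace-free $2$-tensor $\D\rho\otimes\D\rho - \tfrac13|\D\rho|_{\bar g}^2\,\bar g$ are conformally invariant; and (ii) in dimension three $\Div_{\tilde g}X = \Div_{\bar g}X + 6\theta^{-1}X(\theta)$ and $\mathcal L_X\tilde g = \theta^{4}\mathcal L_X\bar g + 4\theta^{3}X(\theta)\,\bar g$, which combine to give $\mathcal D_{\tilde g}X = \theta^{4}\mathcal D_{\bar g}X$ for every vector field $X$. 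Applying (ii) to $U$ and using $|\D\rho|_{\tilde g}^{6} = \theta^{-12}|\D\rho|_{\bar g}^{6}$ shows the first summand of \eqref{DefineB} transforms by $\theta^{-12}\cdot\theta^{4} = \theta^{-8}$. For $A_{\bar g}(\rho)$ the only subtlety is that in $\Div_{\tilde g}\bigl[|\D\rho|_{\tilde g}\grad_{\tilde g}\rho\bigr] = \Div_{\tilde g}\bigl[\theta^{-6}\bigl(|\D\rho|_{\bar g}\grad_{\bar g}\rho\bigr)\bigr]$ the extra term coming from the conformal change of the divergence cancels exactly against the term from differentiating $\theta^{-6}$, leaving $\theta^{-6}\Div_{\bar g}\bigl[|\D\rho|_{\bar g}\grad_{\bar g}\rho\bigr]$; since $|\D\rho|_{\tilde g} = \theta^{-2}|\D\rho|_{\bar g}$, this gives $A_{\tilde g}(\rho) = \theta^{-8}A_{\bar g}(\rho)$, and the conformal invariance of the bracketed $2$-tensor then yields weight $\theta^{-8}$ for the second summand of \eqref{DefineB} as well.

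Part (b) is the main obstacle, and I would establish it by a direct calculation. Set $N := |\D\rho|_{\bar g}$ and $V := \grad_{\bar g}\rho$, so that $V(\rho) = N^{2}$, and put $U := N^{-2}V$, which satisfies $U(\rho) = 1$ and $|U|_{\bar g}^{2} = N^{-2}$. Since $V = N^{2}U$, the first summand of \eqref{DefineB} evaluated on $(V,\cdot)$ equals $N^{8}\,\mathcal D_{\bar g}(U)(U,\cdot)$, and I would compute $\mathcal D_{\bar g}(U)(U,\cdot) = \tfrac12(\mathcal L_U\bar g)(U,\cdot) - \tfrac13(\Div_{\bar g}U)\,\bar g(U,\cdot)$ by expanding $(\mathcal L_U\bar g)(U,\cdot)$ and $\Div_{\bar g}U$ with the Leibniz rule together with the universal identity $\Hess_{\bar g}\rho(\grad_{\bar g}\rho,\cdot) = \tfrac12\D\bigl(|\D\rho|_{\bar g}^{2}\bigr)$. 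After simplification the Lie-derivative term reduces to $\tfrac12(\mathcal L_U\bar g)(U,\cdot) = -N^{-5}V(N)\,\D\rho$, and one arrives at $\mathcal D_{\bar g}(U)(U,\cdot) = -\tfrac13 N^{-4}\bigl(N^{-1}V(N) + \Delta_{\bar g}\rho\bigr)\D\rho$; hence the first summand of \eqref{DefineB} contributes $-\tfrac13 N^{3}\bigl(V(N) + N\Delta_{\bar g}\rho\bigr)\D\rho$ on $(V,\cdot)$. On the other hand, $\bigl(\D\rho\otimes\D\rho - \tfrac13 N^{2}\bar g\bigr)(V,\cdot) = \tfrac23 N^{2}\D\rho$, and from $\Div_{\bar g}(N\,V) = N\Delta_{\bar g}\rho + V(N)$ one gets $A_{\bar g}(\rho) = \tfrac12\bigl(N^{2}\Delta_{\bar g}\rho + N\,V(N)\bigr)$, so the second summand of \eqref{DefineB} contributes $+\tfrac13 N^{3}\bigl(V(N) + N\Delta_{\bar g}\rho\bigr)\D\rho$ on $(V,\cdot)$. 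The two contributions cancel, which is precisely (b). The genuine difficulty is purely bookkeeping: several $N^{-3}Y(N)$- and $N^{-5}V(N)Y(\rho)$-type terms appear in the expansion of $(\mathcal L_U\bar g)(U,\cdot)$ and $\Div_{\bar g}U$ and must be seen to cancel in pairs before the final comparison with the $A_{\bar g}(\rho)$ term, but nothing conceptually deeper than the Hessian identity above enters.
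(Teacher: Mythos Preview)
Your proof is correct in all four parts; I have checked the computation in (b) and the conformal bookkeeping in (d) and they go through as you claim. Note, however, that the paper does not actually prove this proposition---it is quoted verbatim from \cite{WAH-Preliminary} (Proposition 4.1 there) as a result being recalled, so there is no in-paper argument to compare your approach against.
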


\begin{proposition}[Lemma 4.2 of \cite{WAH-Preliminary}]
\hfill
\label{B-in-WAH}
\begin{enumerate}
\item 
Let $g\in \WAH^{k,\alpha;1}$ for $k\geq 1$ and $\alpha\in [0,1)$, and let $\bar g = \rho^2 g$.
Then $\B_{\bar g}(\rho)\in C^{k-1,\alpha}_2(M)$.

\item
If $g\in \WAH^{k,\alpha;2}$ for  $k\geq 2$ and $\alpha\in [0,1)$, then $\bar\nabla \B_{\bar g}(\rho)\in C^{k-2,\alpha}_3(M)$ and thus $\Div_{\bar g}\B_{\bar g}(\rho) \in C^{k-2,\alpha}_1(M)$.
In particular, $\B_{\bar g}(\rho)\in \mathscr C^{k-1,\alpha;1}(M)$ and therefore $\B_{\bar g}(\rho)$ extends continuously to $\bar M$.
\end{enumerate}

\end{proposition}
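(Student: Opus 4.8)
The plan is to exhibit $\B_{\bar g}(\rho)$ as a single universal tensorial expression in a short list of building blocks whose weighted regularity is already known, and then to read off both statements by bookkeeping of weights. Throughout I abbreviate ``lies in $C^{k,\alpha}_\delta(M)$'' by ``has weight $\delta$,'' and use the schematic calculus of these spaces (from \cite{Lee-FredholmOperators} and \cite{WAH-Preliminary}): under tensor products and contractions the weights add, and composition with a smooth function of a bounded, bounded-below, weight-$0$ scalar preserves $C^{k,\alpha}_\delta$- and $\mathscr C^{k,\alpha;m}$-regularity. Inspecting \eqref{DefineB}, \eqref{DefineAlhforsOperator}, and the definition of $A_{\bar g}(\rho)$, every derivative that occurs is either $\bar\nabla\D\rho=\bar\nabla^2\rho$, or $\bar\nabla\bar g$, or $\bar\nabla(\bar g^{-1})=-\bar g^{-1}(\bar\nabla\bar g)\bar g^{-1}$ (linear in $\bar\nabla\bar g$, with coefficients in $\bar g^{-1}$); no second derivative of $\bar g$ appears, and the only nonpolynomial operation applied to $\bar g,\bar g^{-1}$ is taking powers of the scalar $|\D\rho|^2_{\bar g}=\bar g^{-1}(\D\rho,\D\rho)$, which is $\bar\nabla\bar g$-free and, since $|\D\rho|^2_{\bar g}=1$ on $\partial M$, is bounded above and below on a collar of $\partial M$ (all that matters, the weighted norms being effective only there). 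One therefore obtains an $\bar\nabla\bar g$-affine decomposition
\[
\B_{\bar g}(\rho)=\mathcal P\bigl(\bar g,\bar g^{-1},\D\rho,\bar\nabla\D\rho\bigr)+\mathcal Q\bigl(\bar g,\bar g^{-1},\D\rho,\bar\nabla\D\rho\bigr)\ast\bar\nabla\bar g,
\]
where $\mathcal P,\mathcal Q$ are universal expressions formed from tensor products, contractions, and powers of $|\D\rho|^2_{\bar g}$: $\mathcal P$ has weight $2$ (it is a covariant $2$-tensor) and $\mathcal Q$ has weight $-1$, so $\mathcal Q\ast\bar\nabla\bar g$ has weight $(-1)+3=2$. (If one prefers, the conformal covariance $\B_{\theta^4\bar g}(\rho)=\theta^{-8}\B_{\bar g}(\rho)$ of Proposition \ref{B-BasicProperties}(d), with $\theta=|\D\rho|^{-1/2}_{\bar g}$, first normalizes $|\D\rho|_{\bar g}\equiv1$ and removes the prefactors; this does not change the structure of the argument.)

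For part (a): $g\in\WAH^{k,\alpha;1}$ gives $\bar g\in C^{k,\alpha}_2(M)$, nondegenerate on $\bar M$, hence $\bar g^{-1}\in C^{k,\alpha}_{-2}(M)$; the extra content of $\WAH^{k,\alpha;1}$ is that moreover $\bar\nabla\bar g\in C^{k-1,\alpha}_3(M)$. Since $\rho$ is smooth, $\D\rho$ and $\bar\nabla\D\rho$ are smooth tensors (of weights $1$ and $2$), and $|\D\rho|^2_{\bar g}\in C^{k,\alpha}_0(M)$. Feeding these into the decomposition, $\mathcal P\in C^{k,\alpha}_2(M)$ and $\mathcal Q\in C^{k,\alpha}_{-1}(M)$, while $\mathcal Q\ast\bar\nabla\bar g\in C^{k-1,\alpha}_2(M)$; adding, $\B_{\bar g}(\rho)\in C^{k-1,\alpha}_2(M)$. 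The single term $\mathcal Q\ast\bar\nabla\bar g$ accounts both for the drop from $C^{k,\alpha}$ to $C^{k-1,\alpha}$ and for the weight being $2$ rather than the $3$ one would naively expect for a derivative of the metric; this is the loss highlighted after \eqref{HessianScheme}, which here is exactly offset by the $\WAH^{k,\alpha;1}$ control of $\bar\nabla\bar g$.

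For part (b), differentiate the decomposition:
\[
\bar\nabla\B_{\bar g}(\rho)=\bar\nabla\mathcal P+(\bar\nabla\mathcal Q)\ast\bar\nabla\bar g+\mathcal Q\ast\bar\nabla^2\bar g.
\]
Here $\bar\nabla\mathcal P$ and $\bar\nabla\mathcal Q$ are again universal expressions in the building blocks and their first covariant derivatives $\bar\nabla\bar g\in C^{k-1,\alpha}_3(M)$, $\bar\nabla(\bar g^{-1})\in C^{k-1,\alpha}_{-1}(M)$, $\bar\nabla\D\rho$ and $\bar\nabla^2\D\rho$ (smooth), so the weight count gives $\bar\nabla\mathcal P\in C^{k-1,\alpha}_3(M)$ and $\bar\nabla\mathcal Q\in C^{k-1,\alpha}_0(M)$, whence $(\bar\nabla\mathcal Q)\ast\bar\nabla\bar g\in C^{k-1,\alpha}_3(M)$. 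Only the last term needs more than $\WAH^{k,\alpha;1}$: the hypothesis $g\in\WAH^{k,\alpha;2}$ means precisely that, in addition, $\bar\nabla^2\bar g\in C^{k-2,\alpha}_4(M)$, so $\mathcal Q\ast\bar\nabla^2\bar g\in C^{k-2,\alpha}_3(M)$. Hence $\bar\nabla\B_{\bar g}(\rho)\in C^{k-2,\alpha}_3(M)$. Combined with part (a) this is exactly $\gNorm{\B_{\bar g}(\rho)}_{k-1,\alpha;1}<\infty$, i.e.\ $\B_{\bar g}(\rho)\in\mathscr C^{k-1,\alpha;1}(M)$, and the embedding $\mathscr C^{k-1,\alpha;1}(M)\hookrightarrow C^{0,1}(\bar M)$ recalled in the excerpt gives the (Lipschitz, hence continuous) extension to $\bar M$. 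For the divergence, write $\Div_{\bar g}\B_{\bar g}(\rho)=\bar g^{-1}\ast\nabla^{\bar g}\B_{\bar g}(\rho)$ with $\nabla^{\bar g}\B_{\bar g}(\rho)=\bar\nabla\B_{\bar g}(\rho)+(\Gamma^{\bar g}-\Gamma^{\bar h})\ast\B_{\bar g}(\rho)$ and connection-difference tensor $\Gamma^{\bar g}-\Gamma^{\bar h}\sim\bar g^{-1}\ast\bar\nabla\bar g\in C^{k-1,\alpha}_1(M)$; then $\nabla^{\bar g}\B_{\bar g}(\rho)\in C^{k-2,\alpha}_3(M)$, so $\Div_{\bar g}\B_{\bar g}(\rho)\in C^{k-2,\alpha}_1(M)$.

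The crux of the argument — rather than the individual weight counts, which are routine once the decomposition is in place — is that a structure-blind application of $\bar\nabla$ to $\B_{\bar g}(\rho)\in C^{k-1,\alpha}_2(M)$ would produce only $C^{k-2,\alpha}_2(M)$, losing the unit of weight needed for $\B_{\bar g}(\rho)$ to extend continuously to $\bar M$. Recovering it requires keeping track of the $\bar\nabla\bar g$-affine structure, so that the second derivative of the metric enters multiplied by the weight-$(-1)$ coefficient $\mathcal Q$ rather than by a weight-$0$ factor; this is what makes the $C^{k-2,\alpha}_4$ control of $\bar\nabla^2\bar g$ built into $\WAH^{k,\alpha;2}$ the correct hypothesis, and the reason Definition \ref{DefineCMCSFAH} requires $\WAH^{k,\alpha;2}$ rather than merely $\WAH^{k,\alpha;1}$. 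The remaining labor is writing out the decomposition carefully and recording the module and composition properties of $C^{k,\alpha}_\delta(M)$ and $\mathscr C^{k,\alpha;m}(M)$ that make the bookkeeping rigorous.
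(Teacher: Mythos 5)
This paper does not actually prove Proposition \ref{B-in-WAH}: it imports the result as Lemma 4.2 of \cite{WAH-Preliminary}, so there is no in-text proof to compare against. Your argument is, however, precisely the bookkeeping the paper itself sketches in the discussion surrounding \eqref{HessianScheme} and repeats in the proof of Lemma \ref{lemma:cont-hess-etc}: write $\B_{\bar g}(\rho)$ as a universal expression in $\bar g$, $\bar g^{-1}$, $\D\rho$, $\bar\nabla\D\rho$ that is affine in $\bar\nabla\bar g$ and contains no $\bar\nabla^2\bar g$; observe that each of these building blocks lies in $C^{k,\alpha}_\delta(M)$ at its natural weight $\delta=p-q$, so sums of contractions of tensor products are again at natural weight; and note that the defining feature of $\WAH^{k,\alpha;1}$ (resp.\ $\WAH^{k,\alpha;2}$) is exactly that $\bar\nabla\bar g\in C^{k-1,\alpha}_3(M)$ (resp.\ $\bar\nabla{}^2\bar g\in C^{k-2,\alpha}_4(M)$) is also at natural weight rather than one unit lower. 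Your weight counts in (a) and (b), the identification of $\mathscr C^{k-1,\alpha;1}(M)$ membership with the conjunction of the two weighted estimates, and the divergence computation via the difference tensor $\Nabla{\bar g}-\bar\nabla\sim\bar g^{-1}\ast\bar\nabla\bar g\in C^{k-1,\alpha}_1(M)$ are all correct.

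The one step that does not survive scrutiny as written is your treatment of the powers of $|\D\rho|_{\bar g}$. You dismiss the negative and fractional powers on the grounds that $|\D\rho|^2_{\bar g}$ is bounded above and below on a collar of $\partial M$, ``all that matters, the weighted norms being effective only there.'' That is not so: membership in $C^{k,\alpha}_\delta(M)$ requires $C^{k,\alpha}$ regularity on all of $M$, and since the defining function $\rho$ is positive on $M$ and vanishes on $\partial M$ it attains an interior maximum, so $\D\rho$ necessarily vanishes at some interior point, where $|\D\rho|^{-2}_{\bar g}$ and $|\D\rho|^{-1/2}_{\bar g}$ are undefined. (Your parenthetical alternative, conformally rescaling by $\theta=|\D\rho|^{-1/2}_{\bar g}$, fails at the same points.) The gap is easily closed, but by a computation you did not perform: the prefactors $|\D\rho|^6_{\bar g}$ in \eqref{DefineB} and $|\D\rho|_{\bar g}$ inside $A_{\bar g}(\rho)$ are there precisely to clear denominators. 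Expanding $\mathcal D_{\bar g}\bigl(|\D\rho|^{-2}_{\bar g}\grad_{\bar g}\rho\bigr)$ by the Leibniz rule produces only the powers $|\D\rho|^{-2}_{\bar g}$ and $|\D\rho|^{-4}_{\bar g}$, so after multiplication by $|\D\rho|^6_{\bar g}$ only nonnegative integer powers of the weight-zero scalar $|\D\rho|^2_{\bar g}$ survive; likewise $A_{\bar g}(\rho)=\tfrac12|\D\rho|^2_{\bar g}\Delta_{\bar g}\rho+\tfrac14\grad_{\bar g}\rho\bigl(|\D\rho|^2_{\bar g}\bigr)$. Only once this cancellation is verified are your $\mathcal P$ and $\mathcal Q$ genuinely polynomial in globally regular building blocks; with that repair, the rest of your argument goes through unchanged.
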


\begin{proposition}[Proposition 4.3 of \cite{WAH-Preliminary}]
\label{B-DefiningFunctionProperties}
Suppose  $g\in \WAH^{k,\alpha;2}$ for some $k\geq 2$ and $\alpha\in (0,1)$, and $\R[g]+6\in C^{k-2,\alpha}_2(M)$.
Then $\B_{\bar g}(\rho)$ satisfies
\begin{equation}
\label{BisHessian}
\B_{\bar g}(\rho) - \left(\Hess_{\bar g}\rho - \frac13(\Delta_{\bar g}\rho)\bar g\right)\in C^{k-1,\alpha}_3(M).
\end{equation}
In particular, 
\begin{equation}
\left.\phantom{\frac13}\B_{\bar g}(\rho)\right|_{\partial M} 
=
 \left[\Hess_{\bar g}\rho - \frac13(\Delta_{\bar g}\rho)\bar g\right]_{\partial M}.
\end{equation}

\end{proposition}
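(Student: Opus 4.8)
The plan is to unwind the definition \eqref{DefineB} of $\B_{\bar g}(\rho)$ with the Leibniz rule, subtract the trace-free Hessian $\Hess_{\bar g}\rho-\tfrac13(\Delta_{\bar g}\rho)\bar g$, and use the scalar-curvature hypothesis to cancel the leading boundary contribution. Throughout write $f:=|\D\rho|^2_{\bar g}$, $V:=\grad_{\bar g}\rho$ (so $V^\flat=\D\rho$), and $q:=\D f(V)$; recall the identity $\Hess_{\bar g}\rho(V,\cdot)=\tfrac12\D f$ (hence $q=2\Hess_{\bar g}\rho(V,V)$) and that $f\equiv 1$ on $\partial M$.

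\emph{Step 1: translate the hypothesis.} In dimension three the conformal change $g=\rho^{-2}\bar g$ gives $\R[g]=\rho^2\R[\bar g]+4\rho\,\Delta_{\bar g}\rho-6f$, so the hypothesis $\R[g]+6\in C^{k-2,\alpha}_2(M)$ is equivalent to
\begin{equation*}
w:=f-1-\tfrac23\rho\,\Delta_{\bar g}\rho\ \in\ C^{k-2,\alpha}_2(M),
\end{equation*}
provided $\rho^2\R[\bar g]\in C^{k-2,\alpha}_2(M)$. This last inclusion holds for $g\in\WAH^{k,\alpha;2}$: by definition $\bar g=\rho^2 g$ then satisfies $\bar\nabla{}^2\bar g\in C^{k-2,\alpha}_4(M)$, and the resulting weighted estimate on $\R[\bar g]$ is exactly of the kind behind Proposition \ref{B-in-WAH}. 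Differentiating $w$, and using $\D\rho\in C^\infty_1(M)$ and $\Delta_{\bar g}\rho\in C^{k-1,\alpha}_0(M)$, one obtains
\begin{equation*}
\D f=\tfrac23(\Delta_{\bar g}\rho)\,\D\rho+\eta,\qquad q=\tfrac23 f\,\Delta_{\bar g}\rho+\eta(V),
\end{equation*}
with $\eta:=\tfrac23\rho\,\D(\Delta_{\bar g}\rho)+\D w$; here $\eta$, $\eta(V)$, and $f-1$ all lie in weighted spaces whose elements vanish on $\partial M$. In particular $\D f|_{\partial M}=\tfrac23(\Delta_{\bar g}\rho)\,\D\rho|_{\partial M}$ and $q|_{\partial M}=\tfrac23\Delta_{\bar g}\rho|_{\partial M}$.

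\emph{Step 2: expand and cancel.} Applying the Leibniz rules for $\mathcal L$ and $\Div$ to $\mathcal D_{\bar g}(f^{-1}V)$ (using $\tfrac12\mathcal L_V\bar g=\Hess_{\bar g}\rho$ and $\Div_{\bar g}V=\Delta_{\bar g}\rho$), multiplying by $f^3$, and inserting $A_{\bar g}(\rho)=\tfrac12 f\,\Delta_{\bar g}\rho+\tfrac14 q$, one finds
\begin{multline*}
\B_{\bar g}(\rho)-\Bigl(\Hess_{\bar g}\rho-\tfrac13(\Delta_{\bar g}\rho)\bar g\Bigr)
=(f^2-1)\Hess_{\bar g}\rho-\tfrac12 f\bigl(\D f\otimes\D\rho+\D\rho\otimes\D f\bigr)\\
+A_{\bar g}(\rho)\,\D\rho\otimes\D\rho+\Bigl(\bigl(\tfrac13-\tfrac12 f^2\bigr)\Delta_{\bar g}\rho+\tfrac14 fq\Bigr)\bar g.
\end{multline*}
Now insert $f=1+(f-1)$, $\D f=\tfrac23(\Delta_{\bar g}\rho)\D\rho+\eta$, and $q=\tfrac23 f\,\Delta_{\bar g}\rho+\eta(V)$ from Step 1 and expand. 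The ``principal'' part obtained by setting $f=1$, $\eta=0$, $\eta(V)=0$ is
\begin{equation*}
-\tfrac23(\Delta_{\bar g}\rho)\,\D\rho\otimes\D\rho+\tfrac23(\Delta_{\bar g}\rho)\,\D\rho\otimes\D\rho+\bigl(\tfrac13-\tfrac12+\tfrac16\bigr)(\Delta_{\bar g}\rho)\,\bar g=0,
\end{equation*}
while every surviving term carries a factor of $f-1$, $\eta$, or $\eta(V)$, and therefore lies in $C^{k-1,\alpha}_3(M)$ by the weighted H\"older calculus of \cite{WAH-Preliminary}. This gives \eqref{BisHessian}; the ``in particular'' follows since $C^{k-1,\alpha}_3(M)\subset C^0_3(M)$ and, by \eqref{ConvertToBar}, members of $C^0_3(M)$ vanish on $\partial M$.

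The only substantive point is the vanishing of the principal part: the cancellation of the $\Delta_{\bar g}\rho$-terms is forced by the conformal covariance of $\B_{\bar g}(\rho)$ (Proposition \ref{B-BasicProperties}\,\eqref{B-ConformalScaling}), which is precisely what makes $\B_{\bar g}(\rho)$ the correct conformal completion of the trace-free Hessian. Everything else is the Leibniz expansion together with tracking which terms gain a power of $\rho$ in which weighted space --- the place where care is needed, but where no new idea enters.
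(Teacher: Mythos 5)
First, note that this paper does not prove the proposition itself; it is imported verbatim as Proposition~4.3 of \cite{WAH-Preliminary}, so your proposal can only be judged on its own merits. Your algebra checks out: the Leibniz expansion of $f^3\mathcal D_{\bar g}(f^{-1}V)$, the identity $A_{\bar g}(\rho)=\tfrac12 f\Delta_{\bar g}\rho+\tfrac14 q$, the displayed formula for the difference, the exact cancellation of the principal part (including $\tfrac13-\tfrac12+\tfrac16=0$), and the translation of the hypothesis into $w=f-1-\tfrac23\rho\Delta_{\bar g}\rho\in C^{k-2,\alpha}_2(M)$ via $\R[g]=\rho^2\R[\bar g]+4\rho\Delta_{\bar g}\rho-6f$ are all correct. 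This is surely the computational skeleton of the actual proof.

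The gap is in the final analytic step, where you claim every surviving term lies in $C^{k-1,\alpha}_3(M)$ ``by the weighted H\"older calculus.'' Your remainder $\eta=\D w+\tfrac23\rho\,\D(\Delta_{\bar g}\rho)$ is obtained by differentiating quantities with only $k-2$ (resp.\ $k-1$) derivatives of control: $w\in C^{k-2,\alpha}_2(M)$ gives only $\D w\in C^{k-3,\alpha}_2(M)$, so the surviving terms land in $C^{k-3,\alpha}_3(M)$, two derivatives short of the stated conclusion --- and for $k=2$ (explicitly allowed in the hypotheses) the argument says nothing at all, since $\D w$ is not controlled in any H\"older space. The needed repair is to avoid differentiating the hypothesis: set $P:=\Hess_{\bar g}\rho(\grad_{\bar g}\rho,\cdot)-\tfrac13(\Delta_{\bar g}\rho)\D\rho=\tfrac12\D f-\tfrac13(\Delta_{\bar g}\rho)\D\rho$, so that the difference equals $(f^2-1)\bigl(\Hess_{\bar g}\rho-\tfrac13(\Delta_{\bar g}\rho)\bar g\bigr)-f(P\otimes\D\rho+\D\rho\otimes P)+\tfrac12 P(V)(\D\rho\otimes\D\rho+f\bar g)$. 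One then shows $P\in\mathscr C^{k-1,\alpha;1}(M)$ directly from $g\in\WAH^{k,\alpha;2}$ (this uses only $\Hess_{\bar g}\rho\in\mathscr C^{k-1,\alpha;1}(M)$, no curvature hypothesis), uses the curvature hypothesis only for the zeroth-order statement $P|_{\partial M}=0$, and invokes Lemma~2.2(d) of \cite{WAH-Preliminary} --- the same ``vanishing at $\partial M$ plus one barred derivative gains a weight'' mechanism used in Corollary~\ref{BShearFree} --- to upgrade $P$ to $C^{k-1,\alpha}_2(M)$ without ever differentiating $w$. As written, your appeal to generic weighted H\"older calculus does not deliver the $k-1$ derivatives that the statement (and its application in Corollary~\ref{BShearFree}) requires.
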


Proposition \ref{B-DefiningFunctionProperties} implies that the shear-free condition can be expressed using $\B_{\bar g}(\rho)$.

\begin{corollary}
\label{BShearFree}
Suppose that $(g,K,\Phi)$ satisfy parts \eqref{DD-g}--\eqref{DD-Constraints} of Definition \ref{DefineCMCSFAH} with $k\geq 2$.
Then the following are equivalent:
\begin{enumerate}
\item 
\label{part:OriginalSF}
The shear-free condition of part \eqref{DD-BC} in Definition \ref{DefineCMCSFAH} is satisfied.

\item 
\label{part:PhysicalShearFree}
$
\bar\Sigma\big|_{\partial M} = \left.\B_{\bar g}(\rho)\right|_{\partial M}.
$

\item 
\label{part:C-ShearFree}
$
\bar\Sigma - \B_{\bar g}(\rho) \in C^{k-1,\alpha}_3(M).
$

\end{enumerate}

\end{corollary}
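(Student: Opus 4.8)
The plan is to prove the chain of equivalences by showing $\eqref{part:C-ShearFree}\Rightarrow\eqref{part:PhysicalShearFree}\Rightarrow\eqref{part:OriginalSF}\Rightarrow\eqref{part:C-ShearFree}$, with Proposition \ref{B-DefiningFunctionProperties} doing essentially all of the work once we verify its hypotheses apply. First I would observe that under parts \eqref{DD-g}--\eqref{DD-Constraints} of Definition \ref{DefineCMCSFAH} the Hamiltonian constraint \eqref{HamiltonianConstraint}, rewritten as \eqref{FirstLichOmega} with $\Omega=\rho$, together with $\tau=-3$ and the assumed fall-off of the matter fields, forces $\R[g]+6\in C^{k-2,\alpha}_2(M)$: indeed $|\bar\Sigma|^2_{\bar g}$, $|E|^2_g+|B|^2_g+2\xi$, and the $\rho\Delta_{\bar g}\rho$ term are all $\mathcal{O}(\rho^2)$ in the appropriate weighted sense, and $6(\tau^2/9-|\D\rho|^2_{\bar g})$ contributes $6(1-|\D\rho|^2_{\bar g})$, which the boundary condition \eqref{LichOmegaBC} makes vanish to the required order. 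Hence all hypotheses of Proposition \ref{B-DefiningFunctionProperties} are met, and \eqref{BisHessian} holds.

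Next, since by Proposition \ref{B-in-WAH} (applied with $g\in\WAH^{k,\alpha;2}$) the tensor $\B_{\bar g}(\rho)$ extends continuously to $\bar M$, and by part \eqref{DD-K} of Definition \ref{DefineCMCSFAH} so does $\bar\Sigma$, the difference $\bar\Sigma-\B_{\bar g}(\rho)$ is a continuous tensor field on $\bar M$; being a continuous tensor field that lies in $C^{k-1,\alpha}_3(M)$ is equivalent to its restriction to $\partial M$ vanishing (here one uses \eqref{ConvertToBar}: a weight-$2$ covariant $2$-tensor in $C^{k-1,\alpha}_3$ has $|\cdot|_{\bar h}=\mathcal{O}(\rho)$, hence vanishes on $\partial M$, and conversely a continuous such tensor vanishing on $\partial M$ gains the extra power of $\rho$). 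This gives $\eqref{part:C-ShearFree}\Leftrightarrow\eqref{part:PhysicalShearFree}$. For $\eqref{part:PhysicalShearFree}\Leftrightarrow\eqref{part:OriginalSF}$, simply restrict \eqref{BisHessian} to $\partial M$ to get $\B_{\bar g}(\rho)|_{\partial M}=\bigl[\Hess_{\bar g}\rho-\tfrac13(\Delta_{\bar g}\rho)\bar g\bigr]_{\partial M}$ (the last display of Proposition \ref{B-DefiningFunctionProperties}), so $\bar\Sigma|_{\partial M}=\B_{\bar g}(\rho)|_{\partial M}$ holds if and only if $\bar\Sigma|_{\partial M}=\bigl[\Hess_{\bar g}\rho-\tfrac13(\Delta_{\bar g}\rho)\bar g\bigr]_{\partial M}$, which is exactly \eqref{FirstShearFree} with $\Omega=\rho$.

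The only genuine obstacle, and the step warranting care, is the verification that $\R[g]+6\in C^{k-2,\alpha}_2(M)$ so that Proposition \ref{B-DefiningFunctionProperties} may be invoked; everything else is bookkeeping with the weighted H\"older scale and the already-established properties of $\B_{\bar g}(\rho)$. I would therefore structure the proof as: (i) derive the scalar curvature decay from the Hamiltonian constraint and the CMC/boundary conditions; (ii) cite Proposition \ref{B-DefiningFunctionProperties} to get \eqref{BisHessian}; (iii) note continuity of $\bar\Sigma$ and $\B_{\bar g}(\rho)$ on $\bar M$ and translate "membership in $C^{k-1,\alpha}_3(M)$" into "vanishing on $\partial M$"; (iv) conclude the three-way equivalence. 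One should also remark that $\alpha\in(0,1)$ (rather than $[0,1)$) is needed precisely because Proposition \ref{B-DefiningFunctionProperties} requires it.
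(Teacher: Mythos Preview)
Your approach is essentially the paper's, and your explicit verification that $\R[g]+6\in C^{k-2,\alpha}_2(M)$ is a welcome addition that the paper leaves implicit. (A cleaner route than going through \eqref{FirstLichOmega}: from \eqref{HamiltonianConstraint} with $K=\Sigma-g$ and $\tau=-3$ one gets directly $\R[g]+6=|\Sigma|^2_g+|E|^2_g+|B|^2_g+2\xi$, and each summand lies in $C^{k-2,\alpha}_2(M)$ by the regularity assumptions in Definition \ref{DefineCMCSFAH}. Your term-by-term analysis of \eqref{FirstLichOmega} is muddled---for instance $|\bar\Sigma|^2_{\bar g}$ is $\mathcal O(1)$, not $\mathcal O(\rho^2)$, and $1-|\D\rho|^2_{\bar g}$ is only $\mathcal O(\rho)$ a priori---but the conclusion is correct.)

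There is, however, a genuine gap in your argument for \eqref{part:PhysicalShearFree}$\Rightarrow$\eqref{part:C-ShearFree}. You assert that a \emph{continuous} weight-$2$ covariant $2$-tensor on $\bar M$ vanishing on $\partial M$ automatically lies in $C^{k-1,\alpha}_3(M)$; this is false. Mere continuity and vanishing give only $|T|_{\bar h}=o(1)$, hence $|T|_h=o(\rho^2)$, which is strictly weaker than the $\mathcal O(\rho^3)$ bound needed for membership in $C^{k-1,\alpha}_3(M)$ (and says nothing about higher derivatives). What you need---and have---is that $\bar\Sigma-\B_{\bar g}(\rho)\in\mathscr C^{k-1,\alpha;1}(M)$: this follows since $\bar\Sigma\in\mathscr C^{k-1,\alpha;1}(M)$ by Definition \ref{DefineCMCSFAH}\eqref{DD-K} and $\B_{\bar g}(\rho)\in\mathscr C^{k-1,\alpha;1}(M)$ by Proposition \ref{B-in-WAH}. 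The paper then invokes Lemma~2.2(d) of \cite{WAH-Preliminary}, which says precisely that a tensor in $\mathscr C^{k-1,\alpha;1}(M)$ vanishing at $\partial M$ lies in $C^{k-1,\alpha}_{r+1}(M)$ (here $r=2$). Replace ``continuous'' with ``in $\mathscr C^{k-1,\alpha;1}(M)$'' and cite that lemma, and your proof is complete.
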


\begin{proof}
By virtue of Proposition \ref{B-DefiningFunctionProperties}, \eqref{part:C-ShearFree} $\implies$ \eqref{part:OriginalSF} $\implies$ \eqref{part:PhysicalShearFree}.
To see that \eqref{part:PhysicalShearFree} implies \eqref{part:C-ShearFree}, we first note that by Definition \ref{DefineCMCSFAH} and Proposition \ref{B-in-WAH} we have that $\bar\Sigma - \B_{\bar g}(\rho)$ is in $\mathscr C^{k-1,\alpha;1}(M)$ and vanishes as $\rho \to 0$.
Thus
Lemma 2.2(d) of \cite{WAH-Preliminary}
implies that $\bar\Sigma - \B_{\bar g}(\rho)\in C^{k-1,\alpha}_3(M)$.
\end{proof}

\section{The conformal method}
\label{Section-ConformalMethod}
The conformal method provides a means for constructing solutions to the constraint equations and, in the constant-mean-curvature case, for parametrizing the set of all solutions. See \cite{Isenberg-CMCclosed} for a comprehensive discussion of this application for the case of data on compact manifolds;
 \cite{Maxwell-ConformalMethodsSame} contains a brief history and shows equivalence to other data-construction methods; see also the discussion in \cite{ChoquetBruhat-GRBook}.

\subsection{Parametrization}
The idea behind using the conformal method as a means of parametrizing families (such as  $\mathscr D^{k,\alpha}$, $\mathscr D^\infty$, $\mathscr D_\phg$) of initial data sets is to find a space   of {seed data sets} such that each seed data set can be conformally changed in order to obtain an initial data set, with the conformal factor obtained by solving an elliptic partial differential equation. 
If each seed data set is conformally related to precisely one initial data set, then one may parametrize the family of initial data sets by a quotient of the collection of seed data sets.

For $k\geq 2$ and $\alpha\in (0,1)$ we define the collection of \Defn{$C^{k,\alpha}$ seed data sets} appropriate for parametrizing hyperboloidal data to be those tuples $(\lambda,\sigma,\Psi)$ where
\begin{enumerate}

\item $\lambda\in \WAH^{k,\alpha;2}$;

\item $\Psi = (\mathcal E,\mathcal B,j,\zeta)$, where $\mathcal E$ and $\mathcal B$ are vector fields that are divergence-free with respect to $\lambda$, $j$ is a vector field, and $\zeta$ is a non-negative function, all having the regularity given by \eqref{MatterRegularity}; and

\item $\sigma$ is a symmetric covariant $2$-tensor field that is trace-free with respect to $\lambda$, satisfying $\rho\sigma\in \mathscr C^{k-1,\alpha;1}(M)$ and 
\begin{gather}
\label{NewMomentumForSigma}
(\Div_\lambda\sigma)^\sharp =j + \mathcal E \times_\lambda\mathcal B,
\\
\label{sigmaBC}
\left.\rho\sigma \right|_{\partial M} = \left.\B_{\rho^2\lambda}(\rho)\right|_{\partial M}.
\end{gather}
\end{enumerate}

We denote by $\mathscr S^{k,\alpha}$ the collection of such seed data sets, and distinguish the following subsets:

\begin{itemize}

\item The collection $\mathscr S^\infty$ of \Defn{smooth seed data sets} is defined to be those tuples which are in $\mathscr S^{k,\alpha}$ for all $k\geq 2$ and all $\alpha$.

\item The collection $\mathscr S_{\phg}$ of \Defn{polyhomogeneous seed data sets}  is defined to be those tuples for which $\rho^2\lambda\in C^2_\phg(\bar M)$, $\rho\sigma \in C^1_\phg(\bar M)$,  and $\rho\odot\Psi \in C^0_\phg(\bar M)$.

\end{itemize}

Given a seed data set $(\lambda, \sigma, \Psi)$ we seek to construct a CMC shear-free hyperboloidal initial data set $(g, K, \Phi)$ of the form
\begin{equation}
\label{ConfDecomp}
g = \phi^4 \lambda, 
\quad
K= \phi^{-2} \sigma -g,
\quad
\Phi = \phi^2\odot\Psi,
\end{equation}
where $\phi$ is an unknown positive function. 
We note that for any symmetric, trace-free covariant $2$-tensor field $T$ and positive function $\phi$ we have
\begin{equation*}
\Div_{\phi^4\lambda}(\phi^{-2}T) = \phi^{-6}\Div_\lambda T,
\end{equation*}
while the scalar curvature changes under conformal transformations as
\begin{equation}
\label{ConformalChangeScalarCurvature}
\R[\phi^4\lambda] = \phi^{-4}\R[\lambda] - 8\phi^{-5}\Delta_\lambda\phi.
\end{equation}
Thus inserting the fields $(g,K,\Phi)$, as given by \eqref{ConfDecomp}, into \eqref{HamiltonianConstraint}--\eqref{MomentumConstraint}, we see that the constraint equations are satisfied  so long as $\phi$ satisfies the \Defn{Lichnerowicz equation}
\begin{equation}
\label{LichPhi}
\Delta_{\lambda}\phi - \frac18 \R[\lambda]\phi 
+ \frac18 |\sigma|^2_{\lambda} \phi^{-7} 
+\frac18 \left(|\mathcal E|^2_{ \lambda} +|\mathcal B|^2_{\lambda}  + 2\zeta \right)\phi^{-3}
- \frac34 \phi^5=0.
\end{equation}
In order for $(g,K,\Phi)$ to satisfy the (asymptotically hyperbolic and shear-free) boundary conditions in Definition \ref{DefineCMCSFAH}, we require $\phi =1$ along $\partial M$.

One may readily verify that the conditions defining seed data are invariant under the transformation
\begin{equation}
\label{ConformalCovarianceMapping}
( \lambda, \sigma,\Psi)\mapsto (\theta^4 \lambda, \theta^{-2}\sigma, \theta^2\odot\Psi)
\end{equation}
for any suitably regular, positive function $\theta$ with $\theta=1$ along $\partial M$.
Furthermore, both the ansatz \eqref{ConfDecomp} and  the Lichnerowicz equation \eqref{LichPhi} are invariant under \eqref{ConformalCovarianceMapping} in following sense.

\begin{lemma}
\label{lemma:LichInvariance}
Suppose $(\lambda, \sigma, \Psi)\in \mathscr S^{k,\alpha}$ and $\theta$ is a positive function  with $\theta-1\in C^{k,\alpha}_1(M)$; let $( \tilde\lambda, \tilde\sigma,\tilde\Psi)= (\theta^4 \lambda, \theta^{-2}\sigma, \theta^2\odot\Psi)$.
Then $\phi$ satisfies the Lichnerowicz equation \eqref{LichPhi} if and only if $\tilde\phi = \theta^{-1}\phi$ satisfies the Lichnerowicz equation corresponding to $( \tilde\lambda, \tilde\sigma,\tilde\Psi)$.
Furthermore, both $(\lambda, \sigma, \Psi)$ and $( \tilde\lambda, \tilde\sigma,\tilde\Psi)$ give rise to the same initial data set $(g,K,\Phi)$. 
\end{lemma}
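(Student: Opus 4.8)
The statement has two parts, and the plan is to establish both by a direct computation with conformal weights; there is no analytic content here, so the only thing requiring care is the bookkeeping --- confirming that the weights of the covariant tensor $\sigma$, of the vector fields and the density making up $\Psi$, and of the various powers of $\phi$ and $\theta$ all fit together. I would dispose of the second assertion first, since it is purely algebraic and also feeds the first.

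For the second assertion, substitute $\tilde\phi=\theta^{-1}\phi$ into the ansatz \eqref{ConfDecomp} applied to $(\tilde\lambda,\tilde\sigma,\tilde\Psi)$: one gets $\tilde\phi^4\tilde\lambda=\theta^{-4}\phi^4\cdot\theta^4\lambda=\phi^4\lambda=g$ and $\tilde\phi^{-2}\tilde\sigma-\tilde g=\theta^2\phi^{-2}\cdot\theta^{-2}\sigma-g=\phi^{-2}\sigma-g=K$. For the matter fields the point is that, straight from the definition of $\rho\odot\Phi$, the operation $f\odot(\,\cdot\,)$ obeys $f\odot(f'\odot\Psi)=(ff')\odot\Psi$ for positive functions $f,f'$; hence $\tilde\phi^2\odot\tilde\Psi=\tilde\phi^2\odot(\theta^2\odot\Psi)=(\theta^{-2}\phi^2\cdot\theta^2)\odot\Psi=\phi^2\odot\Psi=\Phi$. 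I would note in passing that $\tilde\phi-1=\theta^{-1}\bigl((\phi-1)-(\theta-1)\bigr)$ lies in the same weighted H\"older space as $\phi-1$ (using that $\theta$ is bounded away from $0$ on $\bar M$), so $\tilde\phi$ is an admissible conformal factor, and that $(\tilde\lambda,\tilde\sigma,\tilde\Psi)\in\mathscr S^{k,\alpha}$ by the invariance of the seed-data conditions under \eqref{ConformalCovarianceMapping} noted above.

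For the first assertion I would show that the left-hand side of the Lichnerowicz equation \eqref{LichPhi} is homogeneous of weight $\theta^{-5}$ under $(\lambda,\sigma,\Psi,\phi)\mapsto(\theta^4\lambda,\theta^{-2}\sigma,\theta^2\odot\Psi,\theta^{-1}\phi)$; since $\theta>0$, it then vanishes for one data set exactly when it vanishes for the other. Term by term: the linear part $\phi\mapsto\Delta_\lambda\phi-\tfrac18\R[\lambda]\phi$ is the conformal Laplacian, and \eqref{ConformalChangeScalarCurvature} gives precisely $\Delta_{\theta^4\lambda}(\theta^{-1}\phi)-\tfrac18\R[\theta^4\lambda]\,\theta^{-1}\phi=\theta^{-5}\bigl(\Delta_\lambda\phi-\tfrac18\R[\lambda]\phi\bigr)$; for the $\sigma$-term one uses $|\theta^{-2}\sigma|^2_{\theta^4\lambda}=\theta^{-12}|\sigma|^2_\lambda$ together with $(\theta^{-1}\phi)^{-7}=\theta^7\phi^{-7}$; for the electromagnetic and fluid terms one uses $|\theta^{-6}\mathcal E|^2_{\theta^4\lambda}=\theta^{-8}|\mathcal E|^2_\lambda$ (and likewise for $\mathcal B$), the transformed density $\theta^{-8}\zeta$, and $(\theta^{-1}\phi)^{-3}=\theta^3\phi^{-3}$; and $-\tfrac34\phi^5$ scales by $\theta^{-5}$ trivially. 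Each of the five terms thus acquires exactly the factor $\theta^{-5}$, which is the assertion; the fact that they all carry a single weight is precisely the conformal covariance designed into the notion of seed data.

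An alternative and arguably cleaner route to the first assertion: the derivation preceding \eqref{LichPhi} is reversible --- one only ever multiplies through by the nowhere-vanishing factor $-\tfrac18\phi^5$ --- so $\phi$ solves \eqref{LichPhi} for $(\lambda,\sigma,\Psi)$ if and only if the fields $(g,K,\Phi)$ built from it via \eqref{ConfDecomp} satisfy the Hamiltonian constraint \eqref{HamiltonianConstraint} (the momentum and Maxwell constraints being automatic from \eqref{NewMomentumForSigma} and the divergence-free conditions); the same holds for $(\tilde\lambda,\tilde\sigma,\tilde\Psi)$ and $\tilde\phi$; and by the second assertion these two data sets are literally identical, so the two Lichnerowicz equations have the same solutions. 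In either route the only mild obstacle is keeping the many conformal weights straight.
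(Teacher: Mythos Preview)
Your proposal is correct and is exactly what the paper intends: the paper's proof reads in its entirety ``Direct computation using \eqref{ConformalChangeScalarCurvature},'' and you have simply written out that computation in full, checking the conformal weights term by term. Your alternative route via the Hamiltonian constraint is also fine, and is logically equivalent.
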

\begin{proof}
Direct computation using \eqref{ConformalChangeScalarCurvature}.
\end{proof}

In \S\ref{Section-PandE} we prove, using the results of \S\S\ref{PDEResults}--\ref{PDE-Continuity}, that the various classes of initial data in Definition \ref{DefineCMCSFAH} are homeomorphic to quotients of the corresponding classes of seed data by the equivalence relation arising from \eqref{ConformalCovarianceMapping}.

\subsection{Existence}
While the parametrization theorems are useful for understanding the structure of the spaces of initial data sets, the conditions defining the seed data sets are non-trivial.
In \S\ref{Section-PandE}, we also address the issue of existence of seed data sets by showing that all such data sets can be obtained by choosing certain arbitrary fields, which we refer to as ``free data,'' and projecting to a seed data set as follows.

For $k\geq 3$ and $\alpha \in (0,1)$, we define a \Defn{$C^{k,\alpha}$ free data set} to be a tuple $(\lambda, \nu, \Upsilon)$, where 
\begin{enumerate}
\item $\lambda \in \WAH^{k,\alpha;2}$;
\item $\nu\in C^{k-1,\alpha}_2(M)$ is a symmetric traceless covariant $2$-tensor field;
\item $\Upsilon = (e,b,j,\zeta)$ consists of vector fields $e$, $b$, $j$, and a non-negative function $\zeta$, and has regularity as in \eqref{MatterRegularity}.
\end{enumerate}
We denote by $\mathscr F^{k,\alpha}$ the collection of all such data sets, and distinguish the following subsets:
\begin{itemize}
\item The collection $\mathscr F^{\infty}$ of \Defn{smooth free data sets} consists of those tuples in $\mathscr F^{k,\alpha}$ for all $k\geq 3$ and all $\alpha$.

\item The collection $\mathscr F_\phg$ of \Defn{polyhomogeneous free data sets} consists of those tuples for which
\begin{equation*}
(\rho^2\lambda, \nu, \rho\odot\Upsilon)\in C^2_\phg(\bar M)\times C^0_\phg(\bar M) \times C^0_\phg(\bar M).
\end{equation*}

\end{itemize}

\begin{remark}
The assumption $k\geq 3$ in the definition of $\mathscr F^{k,\alpha}$ is needed in order to solve \eqref{FirstVL} below; see Lemma \ref{TrivialCKF} and Remark \ref{WhyK3}.
\end{remark}

Given a free data set $( \lambda, \nu,\Upsilon)$, one may obtain a seed data set $(\lambda, \sigma, \Psi)$ by constructing $\Psi$ and $\sigma$ according to the following procedure.
Define the matter fields $\Psi = (\mathcal E, \mathcal B, j, \zeta)$ by
\begin{equation}
\label{ConstructEB}
\mathcal E =  e - \grad_\lambda u
\quad\text{ and }\quad
\mathcal B =  b - \grad_\lambda v,
\end{equation}
which are divergence-free provided the functions $u$ and $v$ satisfy
\begin{equation}
\label{LaplaceUV}
\Delta_\lambda u = \Div_\lambda  e
\quad \text{ and }\quad
\Delta_\lambda v = \Div_\lambda b.
\end{equation}

In order to construct a tensor field $\sigma$ satisfying \eqref{NewMomentumForSigma}, we recall the conformal Killing operator $\mathcal D_{\lambda}$ defined by \eqref{DefineAlhforsOperator}.
The formal $L^2$ adjoint $\mathcal D_\lambda^*$ of $\mathcal D_\lambda$ is given by $\mathcal D_\lambda^*T = - (\Div_\lambda T)^\sharp$, and can be used to construct the self-adjoint elliptic operator $L_\lambda := D_\lambda^*D_\lambda$, which is called the \Defn{vector Laplace operator}.
If
\begin{equation*}
\sigma = \rho^{-1}\B_{\rho^2\lambda}(\rho) + \nu + \mathcal D_\lambda W,
\end{equation*}
where
\begin{equation}
\label{FirstVL}
L_{\lambda} W 
=  \Div_\lambda(\rho^{-1}\B_{\rho^2\lambda}(\rho) + \nu)^\sharp
-j - \mathcal E \times_\lambda \mathcal B,
\end{equation}
then \eqref{NewMomentumForSigma} is satisfied.
The proof of Theorem \ref{ExistenceProjection} shows that $\rho \mathcal D_\lambda W$ vanishes at $\partial M$.
It then follows that $\sigma$ satisfies \eqref{sigmaBC}, and thus that the corresponding seed data is shear-free.

\section{PDE results for implementing the conformal method}
\label{PDEResults}

We now gather several results concerning partial differential equations needed in order to carry out the procedure described above.  
These results, in turn, rely on the following Fredholm and regularity result of \cite{WAH-Preliminary}.

\begin{theorem}[Theorems 1.6 and A.14 of \cite{WAH-Preliminary}]
\label{Fred}
Suppose $g\in \WAH^{k,\alpha;1}$ for $k\geq 2$ and $\alpha \in (0,1)$, and that $\mathcal P$ is a second-order, linear, elliptic, formally self-adjoint, geometric operator obtained from $g$, and that there exist a compact set $K\subset M$ and a constant $C>0$ such that
\begin{equation}
\label{BasicL2Estimate}
\|u\|_{L^2(M)}\leq C\|\mathcal P u\|_{L^2(M)} 
\quad\text{ for all }\quad
 u\in C^\infty_c(M\setminus K).
\end{equation} 
Then the indicial radius $R$ of $\mathcal P$ is positive and 
$$\mathcal P\colon C^{k,\alpha}_\delta(M)\to C^{k-2,\alpha}_\delta(M)$$ is Fredholm of index zero if $|\delta-\tfrac{n}{2}|<R$.
Furthermore, the kernel is equal to the $L^2$ kernel of $\mathcal P$.

If, in addition, the metric $g$ is polyhomogeneous, then any solution $u$ to
$
\mathcal P u = f
$
is polyhomogeneous provided $f$ is polyhomogeneous.
\end{theorem}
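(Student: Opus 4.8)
The plan is to treat $\mathcal P$ as a uniformly degenerate (equivalently, $0$-elliptic) operator---every geometric operator built from a conformally compact metric is of this type---and to run the Mazzeo--Melrose $0$-calculus parametrix construction (see \cite{Mazzeo-Edge}, \cite{Melrose-TransformationMethods}) along the lines of \cite{Lee-FredholmOperators}, paying attention to the points where $g$ being only Lipschitz conformally compact (i.e.\ $g\in\WAH^{k,\alpha;1}$, so $\bar g\in C^{0,1}(\bar M)$) weakens the classical estimates. Since the indicial operator of a geometric operator depends only on $\bar g$ and on $|\D\rho|_{\bar g}=1$ along $\partial M$, it is well defined here: freezing coefficients at a boundary point gives a constant-coefficient operator in $\rho$ whose indicial roots $s$ solve $I(\mathcal P,s)=0$, and $R$ is the distance from the critical weight $\tfrac n2$ to the nearest such root. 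The first step is to deduce $R>0$ from \eqref{BasicL2Estimate}: that estimate says the normal operator of $\mathcal P$ on the model half-space has trivial $L^2$-nullspace, which after a Mellin transform in $\rho$ is exactly invertibility of $I(\mathcal P,s)$ along $\Re s=\tfrac n2$; equivalently, an indicial root on that line would produce normalized quasimodes supported in $M\setminus K$ that violate \eqref{BasicL2Estimate}.

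Next I would establish the basic a priori estimate. One patches the interior elliptic parametrix to a boundary parametrix obtained by inverting the normal operator---via Fourier transform in the boundary variables, reducing it to a family of ordinary differential operators in $\rho$ whose explicit (Bessel-type) solutions allow inversion on $\rho^\delta$-weighted spaces precisely when $|\delta-\tfrac n2|<R$---and glues the two together. The resulting parametrix maps $C^{k-2,\alpha}_\delta(M)$ to $C^{k,\alpha}_\delta(M)$, and its error vanishes at $\partial M$. This is where the low regularity matters: because $\bar g$ is only Lipschitz, the Christoffel symbols of $\mathcal P$ need not extend continuously to $\partial M$, so the parametrix error is a $0$-operator whose coefficients, while gaining a power of $\rho$, are merely bounded rather than continuous up to $\bar M$; one must verify that this gain still suffices---combined with a slight sacrifice of H\"older regularity---for the error to be a compact operator, giving
\[
\|u\|_{C^{k,\alpha}_\delta(M)}\le C\bigl(\|\mathcal P u\|_{C^{k-2,\alpha}_\delta(M)}+\|u\|_{L^2(K')}\bigr)
\]
for $|\delta-\tfrac n2|<R$ and some compact $K'\subset M$. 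Hence $\mathcal P$ has finite-dimensional kernel and closed range on each such weight, and the same applies to its formal adjoint, which is $\mathcal P$ again on the reflected weight $n-\delta$.

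The third step is to identify the kernel and obtain index zero. If $\mathcal P u=0$ with $u\in C^{k,\alpha}_\delta(M)$ for some $\delta>\tfrac n2-R$, a boundary bootstrap---peeling off the leading asymptotic term using invertibility of $I(\mathcal P,s)$ for $\tfrac n2-R<\Re s<\tfrac n2+R$ and iterating---improves the decay of $u$ to $C^{k,\alpha}_{\delta'}(M)$ for every $\delta'<\tfrac n2+R$; in particular $u\in L^2(M)$. Thus the kernel is independent of $\delta$ in the Fredholm range and coincides with the $L^2$-kernel. By formal self-adjointness the cokernel on weight $\delta$ is identified with the kernel on the reflected weight $n-\delta$, which is again in the Fredholm range and, by the same bootstrap, is once more the $L^2$-kernel; hence $\dim\ker\mathcal P=\dim\operatorname{coker}\mathcal P$, so $\mathcal P$ has index zero, with kernel equal to the $L^2$-kernel.

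Finally, for the polyhomogeneity statement, suppose $g$---hence every coefficient of $\mathcal P$---is polyhomogeneous and $\mathcal P u=f$ with $f$ polyhomogeneous. The regularity theory above makes $u$ conormal at $\partial M$, after which one runs the standard iteration: peel off the terms of the asymptotic expansion of $u$ one exponent at a time, at each stage inverting $I(\mathcal P,s)$ on the relevant exponent $s$ and inserting a factor of $\log\rho$ whenever $s$ collides with an indicial root or with an exponent accumulated from $f$, using the mapping properties above to control the remainder, with polyhomogeneity of the coefficients ensuring the formal series closes. I expect the main obstacle throughout to be precisely the loss of boundary regularity of $g$: adapting the $0$-calculus---built for smooth conformally compact metrics---so that the parametrix error remains a compact operator even though the coefficients of $\mathcal P$ are only bounded, not continuous, at $\partial M$, and so that the asymptotic iterations still go through. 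This is the technical core that \cite{WAH-Preliminary} must supply beyond the classical theory.
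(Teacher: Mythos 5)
This statement is not proved in the paper: it is quoted, hypotheses and all, from Theorems 1.6 and A.14 of \cite{WAH-Preliminary}, and the paper supplies no argument of its own to compare yours against. So the only meaningful review is of your sketch on its own terms, measured against what the cited reference must actually do.

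Your outline follows the classical template (indicial operator, positivity of the indicial radius from the $L^2$ estimate near infinity, parametrix, weight-duality for index zero, asymptotic iteration for polyhomogeneity), and at that level of resolution it is the right template. But two points keep it from being a proof. First, you have chosen the Mazzeo--Melrose $0$-calculus parametrix as the engine, and that is precisely the tool that is \emph{not} available at this regularity: the $0$-pseudodifferential construction requires the coefficients of $\mathcal P$ to be smooth (or at least continuous) up to $\partial M$, whereas for $g\in\WAH^{k,\alpha;1}$ the metric $\bar g$ is merely Lipschitz, so the Christoffel symbols need not even have boundary limits. The reference you would need to lean on, following \cite{Lee-FredholmOperators}, deliberately avoids the $0$-calculus for exactly this reason: it works in M\"obius coordinate charts, transplants the explicit inverse of the model operator on hyperbolic space, and closes the argument by a perturbation/Neumann-series estimate in weighted H\"older norms, which only requires control of $\gNorm{\bar g}_{k,\alpha;1}$. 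Your remark that ``one must verify that this gain still suffices \dots for the error to be a compact operator'' is not a verification; it is the entire difficulty, and in the framework you chose it is not clear it can be carried out at all. Second, the derivation of $R>0$ from \eqref{BasicL2Estimate} and the polyhomogeneity bootstrap are asserted rather than argued (e.g., the quasimode construction violating \eqref{BasicL2Estimate} when an indicial root sits on $\Re s=\tfrac n2$ needs the coefficients to be frozen at the boundary, which again requires justification when they are only Lipschitz). As you yourself say in the last sentence, the technical core is deferred to \cite{WAH-Preliminary} --- which is an honest description of a citation, not of a proof.
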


\subsection{The vector Laplacian}
We first study the vector Laplacian, the invertibility of which fails if there exist non-trivial conformal Killing vector fields.
The following shows that such vector fields cannot vanish to second order at $\partial M$.

\begin{lemma}
\label{TrivialCKF}
Suppose $\lambda \in \WAH^{3,\alpha;1}$, and $X$ is a conformal Killing vector field on $(M,\lambda)$ such that $X\in C^{3,\alpha}_\delta(M)$ for some  $\alpha >0$ and $\delta>1$.
Then $X$ vanishes identically on $M$.
\end{lemma}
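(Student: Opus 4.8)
The plan is to show that a conformal Killing field $X$ decaying faster than the indicial rate must vanish, by combining the vanishing-order bookkeeping for the operator $L_\lambda = \mathcal D_\lambda^*\mathcal D_\lambda$ with a unique-continuation / integration-by-parts argument. First I would note that $X$ being conformal Killing means $\mathcal D_\lambda X = 0$, hence $L_\lambda X = 0$; so $X$ lies in the kernel of the vector Laplacian. The key input is Theorem \ref{Fred}: applied to the (elliptic, formally self-adjoint, geometric) operator $L_\lambda$ on $(M,\lambda)$ with $\lambda\in\WAH^{3,\alpha;1}$, it tells us that the $C^{3,\alpha}_\delta(M)$ kernel coincides with the $L^2$ kernel precisely when $\delta$ lies in the Fredholm window $|\delta - \tfrac n2| < R$, where $R>0$ is the indicial radius. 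Since $n=3$, the window is $|\delta - \tfrac32| < R$, and the claim that conformal Killing fields cannot vanish to order $\delta > 1$ without vanishing identically is really the statement that the only indicial root of $L_\lambda$ in the range $\delta \le \tfrac32$ giving rise to a non-$L^2$ obstruction is at $\delta = 1$ (or below), so that any $X\in C^{3,\alpha}_\delta$ with $\delta > 1$ automatically lies in $L^2$ and hence, being in the $L^2$-kernel of $L_\lambda$, satisfies $\int_M |\mathcal D_\lambda X|^2_\lambda\,dV_\lambda = \langle L_\lambda X, X\rangle_{L^2} $ — but this integration by parts is exactly what needs justification.

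The main step is therefore the following. I would verify directly that for $X\in C^{3,\alpha}_\delta(M)$ with $\delta > 1$, the pairing $\langle L_\lambda X, X\rangle_{L^2}$ equals $\|\mathcal D_\lambda X\|^2_{L^2(M)}$ with no boundary term. The naive obstruction is a boundary integral over $\partial M$ of a term schematically like $\langle \mathcal D_\lambda X(\cdot, \nu), X\rangle$; since $\mathcal D_\lambda X$ is a weight-$2$ tensor and $X$ has weight $-1$, the integrand in $\bar h$-geometry carries a factor $\rho^{(2) + (\delta + 1) - 1}$ from the combination of tensor weights and the decay of $X$, which for $\delta > 1$ strictly exceeds the borderline, so the boundary term vanishes — but one must be careful, because here $\mathcal D_\lambda X = 0$ identically, so in fact the pairing is trivially zero and there is genuinely nothing to estimate. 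The real content is instead: from $\mathcal D_\lambda X = 0$ we get $\mathcal L_X \lambda = \tfrac23(\Div_\lambda X)\lambda$, i.e. $X$ is a conformal Killing field of $(M,\lambda)$; and an asymptotically hyperbolic manifold (more precisely, one with $\lambda\in\WAH^{3,\alpha;1}$, so $C^{1,1}$ conformally compact with curvature tending to $-1$) has a conformal Killing algebra whose non-trivial elements all grow, or are bounded below, in such a way that they cannot decay like $\rho^\delta$ with $\delta > 1$. Concretely I would expand $X$ in the conformal normal form near $\partial M$: writing $\bar X = \rho^{-1}X$ (a weight-$0$ field, the ``physical rescaling''), the conformal Killing equation forces $\bar X$ to extend continuously to $\partial M$ and its boundary value to be a conformal Killing field of the conformal infinity $(\partial M, [\bar h|_{\partial M}])$ together with a scalar (the ``vertical'' component) that is likewise controlled by a first-order ODE in $\rho$ whose indicial roots one computes to be $0$ and $3$ (these are the standard indicial roots for the conformal Laplacian / vector Laplacian on $\mathbb{H}^3$). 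The decay hypothesis $X\in C^{3,\alpha}_\delta$ with $\delta > 1$ kills the $\rho^0$ mode outright (the boundary conformal Killing field and the leading scalar must vanish), and then an iteration/unique-continuation argument up the expansion — or, cleanly, the observation that the next admissible mode is $\rho^3$ which forces $X\in L^2$, combined with the $L^2$-triviality of the kernel of $L_\lambda$ obtained from the Poincaré-type estimate \eqref{BasicL2Estimate} for $L_\lambda$ — forces $X\equiv 0$.

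So the cleanest route, and the one I would actually write, is: (1) observe $L_\lambda X = 0$; (2) invoke Theorem \ref{Fred} for $L_\lambda$, after checking the $L^2$ estimate \eqref{BasicL2Estimate} holds for the vector Laplacian on a weakly asymptotically hyperbolic manifold (this is where one uses that $\mathcal D_\lambda$ has injective symbol and an integration by parts on compactly supported fields gives $\|\mathcal D_\lambda u\|_{L^2}^2 \ge c\|u\|_{L^2}^2$ away from a compact set, a standard consequence of the curvature normalization); (3) conclude that the $C^{3,\alpha}_\delta$ kernel for $\delta$ in the Fredholm window equals the $L^2$ kernel; (4) show $\delta > 1$ already puts $X$ in that window, or more directly into $L^2$, using the indicial-root computation for $L_\lambda$ — the relevant indicial roots straddling $\tfrac n2 = \tfrac32$ are at $1$ and $2$ (with $n=3$), so $\delta > 1$ suffices to bring $X$ within the range where kernel $= L^2$-kernel; and (5) integrate by parts once more on $M$ itself — now legitimate because $X\in L^2$ and $\mathcal D_\lambda X$ decays fast enough to kill the boundary term — to get $0 = \langle L_\lambda X, X\rangle = \|\mathcal D_\lambda X\|_{L^2}^2 + \|X\|_{(\cdot)}^2$-type identity forcing $X = 0$; alternatively just cite that the $L^2$-kernel of $L_\lambda$ on an AH manifold with no $L^2$ conformal Killing fields is trivial. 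The main obstacle is step (4): pinning down that the indicial roots of the vector Laplacian $L_\lambda$ on $\mathbb{H}^n$ acting on $1$-forms are located so that the threshold for ``$\delta$-decay implies $L^2$-decay implies trivial'' is exactly $\delta > 1$, and that $\WAH^{3,\alpha;1}$ regularity is enough to run the indicial analysis (the curvature is only $C^{0,1}$, so one cannot differentiate the conformal Killing equation too many times — but $k = 3$, i.e. $X\in C^{3,\alpha}$, is precisely calibrated, via Remark \ref{WhyK3}, to afford the two derivatives of $X$ needed without needing a third derivative of $\lambda$).
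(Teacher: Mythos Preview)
Your proposal contains a genuine circularity. Your steps (1)--(5) terminate in the identity
\[
0 = \langle L_\lambda X, X\rangle_{L^2} = \|\mathcal D_\lambda X\|_{L^2}^2,
\]
but $L_\lambda = \mathcal D_\lambda^*\mathcal D_\lambda$, so this is an \emph{exact} equality with no ``$+\|X\|^2_{(\cdot)}$'' term of the kind you posit. All it yields is $\mathcal D_\lambda X = 0$, i.e.\ that $X$ is a conformal Killing field --- which is the hypothesis, not the conclusion. Your alternative, ``cite that the $L^2$-kernel of $L_\lambda$ on an AH manifold with no $L^2$ conformal Killing fields is trivial,'' assumes precisely what is to be proved: the whole purpose of Lemma~\ref{TrivialCKF} in the paper is to supply the missing step ``decaying conformal Killing field $\Rightarrow$ zero,'' and it is \emph{invoked} in Proposition~\ref{SolveVectorLaplacian} to show the kernel of $L_\lambda$ is trivial. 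So the Fredholm/indicial bookkeeping for $L_\lambda$ cannot be used here without begging the question.

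The paper's proof takes a completely different route, independent of the elliptic theory for $L_\lambda$. It uses the third-order prolongation identity for conformal Killing fields (due to Christodoulou),
\[
\bar\nabla^3 X = R_0 \cdot \bar\nabla X + R_1 \cdot X,
\]
with $R_0$ linear in $\Riem[\bar\lambda]$ and $R_1$ linear in $\bar\nabla\Riem[\bar\lambda]$. One then shows that the set $Z$ where $X$, $\bar\nabla X$, and $\bar\nabla^2 X$ all vanish is open, closed, and contains $\partial M$. Containment of $\partial M$ follows from the decay hypothesis $\delta>1$ via coordinate estimates; openness is obtained by integrating the identity inward from $\partial M$ (a Gr\"onwall-type argument on a collar, using boundedness of the Christoffel symbols of $\bar\lambda$ and the $\mathcal O(\rho^{-1})$, $\mathcal O(\rho^{-2})$ growth of $R_0$, $R_1$), and then repeating the argument around any interior point of $Z$. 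This is precisely why $k\ge 3$ is needed (Remark~\ref{WhyK3}): one must make sense of $\bar\nabla^3 X$ and of $\bar\nabla\Riem[\bar\lambda]$, not merely two derivatives of $X$ as you suggest.
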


\begin{proof}
Let $\bar \lambda = \rho^2 \lambda$; for the purposes of this proof we denote by $\bar\nabla$ the Levi-Civita connection associated to $\bar \lambda$.

We adapt to the present setting the argument of \cite{Christodoulou-BoostProblem}, where it is observed that a conformal Killing vector field $X$ satisfies
\begin{equation}
\label{Christodoulou-ConformalKillingIdentity}
\bar\nabla^3X = R_0 \cdot \bar\nabla X + R_1\cdot X,
\end{equation}
for certain tensors $R_0$ linear in $\Riem^{(1,3)}[\bar \lambda]$, and $R_1$ linear in $\bar\nabla\Riem^{(1,3)}[\bar \lambda]$.
The strategy is to examine the set $Z\subset \bar M$ where $X$ vanishes together with $\bar{\nabla} X$ and $\bar{\nabla}^2X$. We start by showing that $Z$ is non-empty. 

Let $\mathcal C_a = \{\rho\leq a\}$ be a collar neighborhood of $\partial M$ and note that for a tensor $T$ of weight $r$ we have
\begin{equation*}
|T|_{\bar h} = \rho^{-r}|T|_h \leq \|T\|_{C^{0,0}_r(\mathcal C_a)}.
\end{equation*}
Thus if $T\in C^{0,0}_{r+\epsilon}(M)$ 
then $T\in C^0(\bar M)$ and is $\mathcal O(\rho^\epsilon)$; furthermore, the functions $T_{i_1\dots 1_p}^{j_1\dots j_q}$ which describe $T$ in coordinates satisfy
\begin{equation}
\label{GenericTensorComponentEstimate}
|T_{i_1\dots 1_p}^{j_1\dots j_q}|\leq C \rho^{\epsilon} \| T\|_{C^{0,0}_{r+\epsilon}(\mathcal C_a)}.
\end{equation}
Since the difference tensor $\nabla - \bar\nabla$ maps $C^{k,\alpha}_\delta(M)$ to itself, we have $\bar\nabla X \in C^{2,\alpha}_\delta(M)$ and $\bar\nabla{}^2X\in C^{1,\alpha}_\delta(M)$.
Consequently, because $X$ is a tensor of weight $r=-1$, we have the following pointwise estimates in background coordinates:
\begin{equation}\label{X-coordinate-ests}
\begin{aligned}
\big{|}X^k\big{|}&\leq C\rho^{\delta+1}\|X\|_{C^0_\delta(\mathcal C_a)}\\
\big{|}\left(\bar{\nabla}X\right)_{i}^k\big{|}&\leq C\rho^{\delta}\|\bar{\nabla}X\|_{C^{0}_\delta(\mathcal C_a)}\leq C\rho^{\delta}\|X\|_{C^1_\delta(\mathcal C_a)}\\
\big{|}\big(\bar{\nabla}^2X\big)_{ij}^k\big{|}&\leq C\rho^{\delta-1}\|\bar{\nabla}^2X\|_{C^{0}_\delta(\mathcal C_a)}\leq C\rho^{\delta-1}\|X\|_{C^2_\delta(\mathcal C_a)}.
\end{aligned}
\end{equation}
Because $\delta>1$, it follows that $X$, $\bar{\nabla} X$ and $\bar{\nabla}^2 X$ all vanish along $\partial M$. In other words, $\partial M\subset Z$.

Since $Z$ is closed and non-empty, it remains to show that $Z$ is open as well.
We proceed by first showing that $X$ vanishes on $\mathcal C_a$ for sufficiently small $a>0$.
 Let $\bar\Gamma$ represent the Christoffel symbols of $\bar\nabla$ in background coordinates near $\partial M$. As $\lambda \in \WAH^{3,\alpha;1}$ we know that $\bar\Gamma$ is bounded. 
Note that 
\begin{equation}
\partial_\rho X^k = \left(\bar\nabla_{\partial_\rho} X\right)^k - \left(\bar\Gamma\cdot X\right)^k.
\end{equation}
Integrating from $\rho=0$, where $X$ vanishes, and using \eqref{X-coordinate-ests} we obtain
\begin{equation}
\begin{aligned}
|X^k|&\le \int_0^{\rho}
\left(\,
\left|\left(\bar\nabla_{\partial_\rho} X\right)^k\right|  
+ \left|\left(\bar\Gamma\cdot X\right)^k\right|
\,\right)\,d\rho' \\
&\le C\int_0^{\rho} \left( (\rho')^\delta \|\bar\nabla X\|_{C^0_\delta(\mathcal C_a)}
+(\rho')^{\delta+1} \| X\|_{C^0_\delta(\mathcal C_a)}\right)\,d\rho'\\
&\le C\rho^{\delta+1}\left(\|\bar\nabla X\|_{C^0_\delta(\mathcal C_a)}
+\rho \| X\|_{C^0_\delta(\mathcal C_a)}\right),
\end{aligned}
\end{equation}
and thus 
\begin{equation}
\begin{aligned}
\|X\|_{C^0_\delta (\mathcal C_a)}
&\le C \sup_{k,\mathcal C_a} \big( \rho^{-\delta-1}|X^k|\big)
\le C\left(\|\bar\nabla X\|_{C^0_\delta(\mathcal C_a)}
+\rho \| X\|_{C^0_\delta(\mathcal C_a)}\right).
\end{aligned}
\end{equation}
Absorbing the final term, we obtain 
\begin{equation}\label{Xestimate}
\|X\|_{C^0_\delta (\mathcal C_a)}\leq C\|\bar\nabla X\|_{C^0_\delta (\mathcal C_a)}
\end{equation}
provided $a$ is small.
Using analogous integration of
\begin{equation}
\partial_\rho (\bar\nabla X)_i^k =\left(\bar\nabla_{\partial_\rho}(\bar\nabla X)\right)_i^k + \left(\bar\Gamma\cdot \bar\nabla X\right)_i^k
\end{equation}
we find that for sufficiently small $a$ we have 
\begin{equation}\label{barnablaX}
\|\bar{\nabla}X\|_{C^0_\delta (\mathcal C_a)}\leq C\|(\bar\nabla^2 X)\|_{C^0_\delta (\mathcal C_a)}.
\end{equation}
Combining 
\eqref{Xestimate} and \eqref{barnablaX} we obtain 
\begin{equation}
\|X\|_{C^2_\delta (\mathcal C_a)}\leq C \|\bar\nabla^2X\|_{C^0_\delta(\mathcal C_a)}.
\end{equation}

Next, note that \eqref{Christodoulou-ConformalKillingIdentity} implies 
\begin{equation}
\partial_\rho (\bar\nabla^2 X)_{ij}^k = (\bar\Gamma \cdot \bar\nabla^2 X)_{ij}^k + (R_0 \cdot \bar\nabla X)_{ij}^k + (R_1\cdot X)_{ij}^k.
\end{equation}
Using \eqref{GenericTensorComponentEstimate} we see that  the coefficients of $R_0$ and $R_1$ in background coordinates are $\mathcal O(\rho^{-1})$ and $\mathcal O(\rho^{-2})$, respectively. 
Thus using \eqref{X-coordinate-ests} again and integrating from $\rho =0$, we obtain
\begin{equation}
|(\bar\nabla^2X)_{ij}^k| 
\leq   C\rho^\delta \|X\|_{C^2_\delta(\mathcal C_a)}.
\end{equation}
Thus for sufficiently small $a>0$ we have $\|\bar\nabla^2X\|_{C^0_\delta(\mathcal C_a)} \leq C \rho \|X\|_{C^2_\rho(\mathcal C_a)}$.
When combined with $\|X\|_{C^2_\delta (\mathcal C_a)}\leq C \|\bar\nabla^2X\|_{C^0_\delta(\mathcal C_a)}$, this implies that $X$ vanishes identically on some collar neighborhood $\{\rho\le a\}$ of $\partial M$.

Next, suppose that $p\in Z$  is a point where  $\rho>a/2$, and consider the restriction of $X$ to the open geodesic ball $B(p;\epsilon)$ with radius $\epsilon$ and center $p$. The fundamental theorem of calculus implies that there exists some constant $C$, independent of $\epsilon$, such that
\begin{equation}
\label{Christodoulou-TaylorBound}
\|X\|_{C^2(B(p;\epsilon))} \leq \epsilon C\|\bar\nabla^3X\|_{C^0(B(p;\epsilon))}.
\end{equation}
Since $\rho(p)>a/2$, the restriction to $B(p;\epsilon)$ of $\Riem^{(1,3)}[\bar \lambda]$ and $\bar\nabla\Riem^{(1,3)}[\bar \lambda]$, and hence of the coefficients $R_0$ and $R_1$ in \eqref{Christodoulou-ConformalKillingIdentity}, are bounded by some constant $C$.
For such $\epsilon$, the identity \eqref{Christodoulou-ConformalKillingIdentity} implies \begin{equation}
\|\bar\nabla^3X\|_{C^0(B(p;\epsilon))}
\leq C \|X\|_{C^1(B(p;\epsilon))}.
\end{equation}
Combining this estimate with \eqref{Christodoulou-TaylorBound}, we conclude that for sufficiently small $\epsilon$, the vector field $X$ must vanish on $B(p;\epsilon)$, so $B(p;\epsilon)\subset Z$. This completes our proof. 
\end{proof}

We now address the invertibility of the vector Laplacian $L_\lambda$.

\begin{proposition}
\label{SolveVectorLaplacian}
Suppose that $\lambda \in \WAH^{k,\alpha;1}$ for  $k\geq 3$ and $\alpha\in (0,1)$.
Let $\delta\in (-1,3)$.
Then for each vector field $Y\in C^{k-2,\alpha}_\delta(M)$ there exists a unique vector field $W\in C^{k,\alpha}_\delta(M)$ such that
\begin{equation}
\label{VectorLaplacianEquation}
L_{\lambda} W =Y.
\end{equation}
Furthermore, there exists a constant $C>0$ such that for all $W\in C^{k,\alpha}_\delta(M)$ we have
\begin{equation}
\label{VL-InvertibilityEstimate}
\|W\|_{C^{k,\alpha}_\delta(M)}\leq C \|L_\lambda W\|_{C^{k-2,\alpha}_\delta(M)}.
\end{equation}
Finally, if $\rho^2\lambda\in C^2_\phg(\bar M)$ and $\rho^{-3}Y\in C^0_\phg(\bar M)$, then $\rho^{-3}W\in C^0_\phg(\bar M)$ and $\mathcal D_\lambda W\in C^0_\phg(\bar M)$.
\end{proposition}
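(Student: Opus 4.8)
The plan is to apply Theorem \ref{Fred} to the vector Laplacian $L_\lambda = \mathcal D_\lambda^*\mathcal D_\lambda$, which is second-order, linear, elliptic, formally self-adjoint, and geometric. First I would verify the required $L^2$ coercivity estimate \eqref{BasicL2Estimate} on a neighborhood of $\partial M$: since $L_\lambda$ is obtained from a metric $\lambda\in\WAH^{k,\alpha;1}$ whose sectional curvatures tend to $-1$, the operator is a compact perturbation (near infinity) of the corresponding operator on hyperbolic space, for which such an estimate is classical; alternatively one uses the integration-by-parts identity $\langle L_\lambda u,u\rangle_{L^2} = \|\mathcal D_\lambda u\|_{L^2}^2$ together with a Poincar\'e-type inequality valid on the uniformly degenerate structure. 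Granting this, Theorem \ref{Fred} tells us that $L_\lambda\colon C^{k,\alpha}_\delta(M)\to C^{k-2,\alpha}_\delta(M)$ is Fredholm of index zero for $|\delta-\tfrac32|<R$, where $R>0$ is the indicial radius, and that its kernel equals its $L^2$ kernel.

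Next I would identify the indicial radius and show $(-1,3)$ lies in the allowed range. The indicial operator of $L_\lambda$ agrees with that of the vector Laplacian on hyperbolic $3$-space; a standard computation (decomposing vector fields into gradient and divergence-free parts, or reading off the indicial roots of the Ahlfors operator composed with its adjoint) gives indicial roots symmetric about $n/2 = 3/2$, with indicial radius $R=3/2$, so that the Fredholm range is exactly $\delta\in(0,3)$. This already handles $\delta\in(0,3)$; to reach the full interval $(-1,3)$ claimed in the statement I would invoke the fact that the weights in $(-1,0]$ correspond to indicial roots with no interfering exceptional values between them and $\delta=3/2$, so that a solution in $C^{k,\alpha}_\delta(M)$ with $\delta\le 0$ automatically improves—more precisely, one solves in $C^{k,\alpha}_{\delta'}(M)$ for some $\delta'\in(0,3)$ and then uses a boundary-regularity/weight-shifting argument (as in \cite{Lee-FredholmOperators}, \cite{WAH-Preliminary}) to conclude membership in the desired weighted space, there being no indicial roots of $L_\lambda$ in $(-1,3)$ other than those forcing the behavior already accounted for.

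Having established Fredholmness of index zero, surjectivity is equivalent to injectivity, and injectivity is equivalent to triviality of the $L^2$ kernel. But the $L^2$ kernel of $L_\lambda$ consists precisely of conformal Killing vector fields of $(M,\lambda)$ lying in $C^{k,\alpha}_\delta(M)\subset C^{3,\alpha}_\delta(M)$ with $\delta>1$ (here is where $k\ge 3$ and $\delta>-1$ being sharp at $\delta>1$ enters, via the pairing $\langle L_\lambda W,W\rangle = \|\mathcal D_\lambda W\|^2_{L^2}$ forcing $\mathcal D_\lambda W=0$). By Lemma \ref{TrivialCKF}, any such vector field vanishes identically. Hence $L_\lambda$ is an isomorphism onto $C^{k-2,\alpha}_\delta(M)$, giving existence and uniqueness of $W$; the estimate \eqref{VL-InvertibilityEstimate} is then the open mapping theorem (bounded inverse), combined with elliptic regularity to upgrade from a lower-order norm if necessary.

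Finally, the polyhomogeneity statement is immediate from the last clause of Theorem \ref{Fred}: if $\rho^2\lambda\in C^2_\phg(\bar M)$ then $L_\lambda$ has polyhomogeneous coefficients, so $\rho^{-3}Y\in C^0_\phg(\bar M)$—i.e.\ $Y$ polyhomogeneous with the appropriate weight—forces $W$ polyhomogeneous, whence $\rho^{-3}W\in C^0_\phg(\bar M)$; and since $\mathcal D_\lambda$ is a first-order operator with polyhomogeneous coefficients that preserves polyhomogeneity, $\mathcal D_\lambda W\in C^0_\phg(\bar M)$ as well (the weight bookkeeping: $\mathcal D_\lambda$ lowers the weight of a weight-$(-1)$ vector field to that of a weight-$0$, i.e.\ bounded, $2$-tensor, consistent with the claimed space). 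The main obstacle I anticipate is the weight-range argument in the second paragraph—pinning down that there are no indicial roots of $L_\lambda$ in $(-1,0]$ obstructing the extension of the Fredholm range from $(0,3)$ to $(-1,3)$, and correctly matching the $L^2$-kernel characterization with the hypotheses of Lemma \ref{TrivialCKF}; everything else is a routine application of the quoted Fredholm theorem together with the conformal Killing rigidity already proved.
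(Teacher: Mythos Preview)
Your overall structure matches the paper's proof, but there is a concrete error in the indicial radius and a resulting misdiagnosis of where the real work lies.

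\textbf{The indicial radius is $2$, not $3/2$.} The paper cites \cite{Lee-FredholmOperators} for this; with $R=2$ and the center of the Fredholm strip at $n/2=1$ (here $n$ is the boundary dimension), the range $|\delta-1|<2$ is exactly $(-1,3)$. So there is no need for any weight-shifting or extension argument to pass from $(0,3)$ to $(-1,3)$: Theorem~\ref{Fred} gives Fredholm index zero on the full claimed range directly. Your second paragraph is therefore addressing a non-issue and is the part of the proposal most at risk of going wrong.

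\textbf{The genuine work is in the kernel argument, and you gloss over it.} To invoke Lemma~\ref{TrivialCKF} you need the kernel element to lie in $C^{3,\alpha}_\delta(M)$ with $\delta>1$; but a priori you only know $X\in C^{k,\alpha}_\delta(M)$ for some $\delta\in(-1,3)$, possibly $\delta\le 1$. The paper handles this as follows: since the kernel equals the $L^2$ kernel by Theorem~\ref{Fred}, elliptic regularity together with Lemma~4.6 of \cite{WAH-Preliminary} improves the decay to $X\in C^{k,\alpha}_2(M)$; then Lemma~3.6(b) of \cite{Lee-FredholmOperators} places $X$ in the Sobolev space $H^{2,2}(M)$, and density of $C^\infty_c(M)$ in $H^{2,2}(M)$ justifies the integration by parts $\|\mathcal D_\lambda X\|_{L^2}^2=(X,L_\lambda X)_{L^2}=0$. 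Only then is $X$ known to be conformal Killing with decay $\delta=2>1$, and Lemma~\ref{TrivialCKF} applies. Your sentence ``here is where $k\ge 3$ and $\delta>-1$ being sharp at $\delta>1$ enters'' conflates the decay-improvement step with the hypothesis of the lemma.

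The remaining pieces---citing the basic $L^2$ estimate (the paper uses Lemma~3.15 of \cite{Andersson-EllipticSystems}), the bounded-inverse estimate, and the polyhomogeneity clause---are as you describe.
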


\begin{proof} 
It is straightforward to see that the vector Laplacian $L_\lambda$ is a formally self-adjoint, elliptic, geometric operator.
That the basic estimate \eqref{BasicL2Estimate} holds is the content of Lemma 3.15 of \cite{Andersson-EllipticSystems}. 
Furthermore, in \cite{Lee-FredholmOperators} the indicial radius of $L_\lambda$ is shown to be $2$; thus we may invoke Theorem \ref{Fred} to conclude that
\begin{equation}
\label{WhereDoesVectorLaplacianGo?}
L_\lambda\colon C^{k,\alpha}_\delta(M;TM)\to C^{k-2,\alpha}_\delta(M;TM)
\end{equation}
is Fredholm of index zero.

To show that \eqref{VectorLaplacianEquation} admits a unique solution, we argue that the kernel of $L_\lambda$ is trivial.
Supposing that $X\in C^{k,\alpha}_\delta(M)$ and that $L_\lambda X=0$,  elliptic regularity and Lemma 4.6 of \cite{WAH-Preliminary} imply that $X\in C^{k,\alpha}_2(M)$.
By Lemma 3.6(b) of \cite{Lee-FredholmOperators}, $X$ is in the Sobolev space $H^{2,2}(M)$.
For all $V\in C^\infty_c(M)$ we have $\|\mathcal D_\lambda V\|_{L^2}^2 = (V, L_\lambda V)_{L^2}$; due to the density of compactly supported fields, this identity extends by continuity to all vector fields in  $H^{2,2}(M)$.
Thus  $\|\mathcal D_\lambda X\|_{L^2(M)}=0$ and $X$ is a conformal Killing vector field on $(M,g)$.
Since $X\in C^{3,\alpha}_2(M)$,
Lemma \ref{TrivialCKF} tells us that $X$ must be identically zero.
Therefore the $C^{k,\alpha}_\delta$ kernel of $L_\lambda$ is trivial and the mapping \eqref{WhereDoesVectorLaplacianGo?} is a bijection.
The continuity of $L_\lambda$, together with the closed graph theorem, yields \eqref{VL-InvertibilityEstimate}.

When  $\rho^2g \in C^2_\phg(\bar M)$, Theorem \ref{Fred} implies that the solution is polyhomogeneous when $Y$ is.
If $\rho^{-3}Y\in C^0_\phg(\bar M)$, then Lemma A.5 of \cite{WAH-Preliminary} implies that we may furthermore choose $\gamma$ such that  $\rho^{-3}Y\in C^{0,\gamma}_\phg(\bar M)$, and hence
$Y\in C^{k,\alpha}_2(M)$ for all $k\in\mathbb N_0$.
Subsequently, we find that $\rho^{-3}W\in C^{0}_\phg(\bar M)$.
Finally, it follows from a direct computation that $\mathcal D_\lambda W\in C^0_\phg(\bar M)$.
\end{proof}

\begin{remark}
\label{WhyK3}
The maps \eqref{WhereDoesVectorLaplacianGo?} are Fredholm of index zero even if $\lambda\in \WAH^{2,\alpha;1}$.
The hypothesis $k\geq 3$ is used only to show that the kernel is trivial.
\end{remark}

\subsection{Scalar equations}
\label{S-ScalarEquations}
We record here several results for scalar equations that follow directly from the results of \cite{WAH-Preliminary}.

\begin{proposition}[Proposition 6.1 of \cite{WAH-Preliminary}]
\label{PerturbedPoisson}
Suppose that $g \in \WAH^{k,\alpha;1}$ for  $k\geq 2$ and $\alpha\in (0,1)$.
Suppose also that $\kappa\in C^{k-2,\alpha}_\sigma(M)$ for some $\sigma>0$, and that  $c$ is a constant satisfying $c>-1$ and $c-\kappa\geq 0$.
Then so long as 
\begin{equation}
\label{PP-Radius}
\left|\delta-1\right|<\sqrt{1+c},
\end{equation}
the map
\begin{equation*}
\Delta_g - (c-\kappa)\colon C^{k,\alpha}_\delta(M)\to C^{k-2,\alpha}_\delta(M)
\end{equation*}
is invertible.

Furthermore, if $\rho^2g\in C^2_\phg(\bar M)$, $\rho^{-\nu}f\in C^0_\phg(\bar M)$ for some $\nu >1-\sqrt{1+c}$, and $\kappa$ is a polyhomogeneous function \(which necessarily vanishes on $\partial M$\), then the unique function $u\in C^{2,\alpha}_\delta(M)$ such that
\begin{equation}
\label{PP}
\Delta_\lambda u + (\kappa-c) u = f
\end{equation} 
is polyhomogeneous and satisfies the following boundary regularity conditions:
\begin{itemize}
\item If $\nu > 1+\sqrt{1+c}$, then $\rho^{-1-\sqrt{1+c}}\, u \in C^0_\phg(\bar M)$.
\item If $|\nu - 1|<\sqrt{1+c}$, then $\rho^{-\nu} u\in C^0_\phg(\bar M)$.
\item If $\nu = 1+\sqrt{1+c}$, then $\rho^{-\mu} u \in C^0_\phg(\bar M)$ for all $\mu< 1+\sqrt{1+c}$.
\end{itemize}
\end{proposition}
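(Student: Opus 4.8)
The plan is to treat $\mathcal P:=\Delta_g-c+\kappa$ as a decaying perturbation of the geometric operator $\mathcal P_0:=\Delta_g-c$ and to invoke Theorem~\ref{Fred}. First I would verify the coercivity hypothesis \eqref{BasicL2Estimate} for $\mathcal P_0$: since $g\in\WAH^{k,\alpha;1}$ has sectional curvatures tending to $-1$ at $\partial M$, a Poincar\'e inequality near infinity furnishes, for each $\epsilon>0$, a compact $K\subset M$ with $\langle-\Delta_g u,u\rangle_{L^2(M)}\ge(1-\epsilon)\|u\|_{L^2(M)}^2$ for all $u\in C^\infty_c(M\setminus K)$; together with $c>-1$ this gives $|\langle\mathcal P_0u,u\rangle_{L^2}|=\|\D u\|_{L^2}^2+c\|u\|_{L^2}^2\ge(1-\epsilon+c)\|u\|_{L^2}^2\ge\tfrac{1+c}{2}\|u\|_{L^2}^2$ once $\epsilon\le\tfrac{1+c}{2}$, which is \eqref{BasicL2Estimate}. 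A computation with the hyperbolic model identifies the indicial roots of $\mathcal P_0$ as $s_\pm=1\pm\sqrt{1+c}$, so Theorem~\ref{Fred} shows that $\mathcal P_0\colon C^{k,\alpha}_\delta(M)\to C^{k-2,\alpha}_\delta(M)$ is Fredholm of index zero, with kernel equal to its $L^2$ kernel, for $|\delta-1|<\sqrt{1+c}$. Since $\kappa\in C^{k-2,\alpha}_\sigma(M)$ with $\sigma>0$, multiplication by $\kappa$ factors through the compact inclusion $C^{k,\alpha}_\delta(M)\hookrightarrow C^{k-2,\alpha}_{\delta-\sigma/2}(M)$ followed by bounded maps into $C^{k-2,\alpha}_\delta(M)$, hence is compact, and $\mathcal P$ is Fredholm of index zero on the same range.

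Next I would show $\mathcal P$ is injective, which with index zero yields bijectivity and, via the open mapping theorem, a bounded inverse --- the asserted invertibility. If $u\in C^{k,\alpha}_\delta(M)$ satisfies $\mathcal Pu=0$, then by elliptic and asymptotic regularity (there being no indicial root in $(s_-,s_+)$, the weight of such a solution may be improved up to just below $s_+=1+\sqrt{1+c}>1$), both $u$ and $\D u$ lie in $L^2(M)$ and the boundary terms in
\begin{equation*}
0=\langle\mathcal Pu,u\rangle_{L^2}=-\|\D u\|_{L^2}^2-\int_M(c-\kappa)\,u^2\,\D V_g
\end{equation*}
vanish. Since $c-\kappa\ge0$ by hypothesis, both terms on the right are $\le0$, forcing $\D u\equiv0$; then $u$ is constant, and an $L^2$ constant on a manifold of infinite volume is zero.

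For the polyhomogeneous statement, assume $\rho^2g\in C^2_\phg(\bar M)$, $\kappa$ polyhomogeneous (hence vanishing on $\partial M$ since $\sigma>0$), and $\rho^{-\nu}f\in C^0_\phg(\bar M)$ with $\nu>1-\sqrt{1+c}$. The polyhomogeneity clause of Theorem~\ref{Fred}, whose proof tolerates the extra polyhomogeneous zeroth-order term $\kappa$, gives that the solution $u$ is polyhomogeneous; it then remains only to read off leading exponents from the indicial polynomial $I(\gamma)=\gamma(\gamma-2)-c$, with roots $s_\pm=1\pm\sqrt{1+c}$. Because $u\in C^{k,\alpha}_\delta(M)$ with $\delta>s_-$, the expansion of $u$ carries no $\rho^{s_-}$ term; the forcing $f\sim\rho^\nu$ contributes a term $\sim\rho^\nu$ to $u$ whenever $I(\nu)\ne0$, while the homogeneous part of the expansion contributes a term $\sim\rho^{s_+}$, so $u$ decays at least like $\rho^{\min(\nu,s_+)}$. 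This gives $\rho^{-1-\sqrt{1+c}}u\in C^0_\phg(\bar M)$ when $\nu>1+\sqrt{1+c}$, and $\rho^{-\nu}u\in C^0_\phg(\bar M)$ when $|\nu-1|<\sqrt{1+c}$. In the resonant case $\nu=1+\sqrt{1+c}=s_+$ one has $I(\nu)=0$, so matching the forcing forces a $\rho^{s_+}\log\rho$ term into the expansion of $u$; hence $\rho^{-\mu}u\to0$ at $\partial M$ for every $\mu<1+\sqrt{1+c}$ but not for $\mu=1+\sqrt{1+c}$, which is the remaining case.

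The main obstacle is not a single calculation but getting the two analytic inputs exactly right: the coercivity estimate \eqref{BasicL2Estimate}, which rests on the precise interplay of $c>-1$ with the asymptotically hyperbolic spectral gap and must hold already in the merely $C^{1,1}$-conformally-compact class $\WAH^{k,\alpha;1}$; and, in the polyhomogeneous part, the bookkeeping of the expansion at the indicial root $s_+=1+\sqrt{1+c}$, in particular the emergence of the logarithm in the resonant case $\nu=s_+$. Both are furnished by the elliptic and asymptotic-expansion theory developed in \cite{WAH-Preliminary}, which extends that of \cite{Mazzeo-Edge} and \cite{Lee-FredholmOperators}.
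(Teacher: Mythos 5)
This proposition is imported verbatim from Proposition 6.1 of \cite{WAH-Preliminary}; the present paper states it without proof, so there is no in-paper argument to compare against. Your reconstruction --- the basic $L^2$ estimate \eqref{BasicL2Estimate} from the near-boundary Poincar\'e inequality together with $c>-1$, Theorem \ref{Fred} applied to the geometric operator $\Delta_g-c$ plus a compact perturbation by the decaying potential $\kappa$, injectivity via decay improvement and integration by parts using $c-\kappa\ge 0$ and infinite volume, and the indicial-root bookkeeping at $s_\pm=1\pm\sqrt{1+c}$ (with the logarithm in the resonant case) for the polyhomogeneous refinement --- is precisely the standard route taken in that reference and in \cite{Lee-FredholmOperators}, and I see no gap in it.
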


The following consequence of Proposition \ref{PerturbedPoisson} is related to the Helmholtz decomposition for vector fields.
\begin{corollary}[Helmholtz splitting]
\label{Helmholtz}
Suppose that $\lambda \in \WAH^{k,\alpha;1}$ for  $k\geq 2$ and $\alpha\in (0,1)$, and that $\delta\in (0,2)$.
Then any vector field $X\in C^{k-1,\alpha}_\delta(M)$ can be uniquely written as
\begin{equation}
X = \grad_\lambda u + Y,
\end{equation}
where the vector field $Y$ satisfies $\Div_\lambda Y=0$ and the function $u$ satisfies $u\in C^{k,\alpha}_\delta(M)$.

Furthermore, if $\rho^2\lambda\in C^2_\phg(\bar M)$ and $\rho^{-3}X\in C^0_\phg(\bar M)$, then $\rho^{-3} Y \in C^0_\phg(\bar M)$, $\rho^{-2}u \in C^0_\phg(\bar M)$, and $\rho^{-3}\grad_\lambda u\in C^0_\phg(\bar M)$.
\end{corollary}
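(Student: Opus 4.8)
The plan is to deduce the Helmholtz splitting directly from the invertibility statement in Proposition~\ref{PerturbedPoisson} applied to the scalar Poisson equation, i.e.\ the case $c=0$, $\kappa=0$. First I would set $f = \Div_\lambda X$ and observe that, since $X\in C^{k-1,\alpha}_\delta(M)$ and $\Div_\lambda$ lowers the weight by $1$ while losing one derivative, we have $f\in C^{k-2,\alpha}_{\delta}(M)$ (the divergence of a weight-$(-1)$ vector field has weight $0$, but expressed against $h$ one checks the weighted norm works out; more carefully, $\Div_\lambda X$ involves $\nabla X$ which lies in $C^{k-2,\alpha}_\delta(M)$ after accounting for the difference tensor $\nabla-\bar\nabla$ preserving weighted spaces). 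With $c=0$ the indicial condition \eqref{PP-Radius} reads $|\delta-1|<1$, i.e.\ $\delta\in(0,2)$, which is exactly our hypothesis. Hence $\Delta_\lambda\colon C^{k,\alpha}_\delta(M)\to C^{k-2,\alpha}_\delta(M)$ is invertible, and there is a unique $u\in C^{k,\alpha}_\delta(M)$ with $\Delta_\lambda u = \Div_\lambda X$. Setting $Y := X-\grad_\lambda u$, which lies in $C^{k-1,\alpha}_\delta(M)$ since $\grad_\lambda u\in C^{k-1,\alpha}_\delta(M)$, we get $\Div_\lambda Y = \Div_\lambda X - \Delta_\lambda u = 0$, giving existence.

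For uniqueness, suppose $\grad_\lambda u_1 + Y_1 = \grad_\lambda u_2 + Y_2$ with both $u_i\in C^{k,\alpha}_\delta(M)$ and both $Y_i$ divergence-free. Then $w := u_1-u_2\in C^{k,\alpha}_\delta(M)$ satisfies $\Delta_\lambda w = \Div_\lambda(Y_2-Y_1)=0$, so $w$ lies in the kernel of the invertible map $\Delta_\lambda$ on $C^{k,\alpha}_\delta(M)$, forcing $w$ to be constant; but a constant in $C^{k,\alpha}_\delta(M)$ with $\delta>0$ must vanish. (Alternatively: a bounded harmonic function on an asymptotically hyperbolic manifold that decays at infinity is identically zero by the maximum principle.) Hence $u_1=u_2$ and consequently $Y_1=Y_2$.

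For the polyhomogeneity statement, I would again invoke the second half of Proposition~\ref{PerturbedPoisson} with $c=0$, $\kappa=0$, $f=\Div_\lambda X$. The hypothesis $\rho^{-3}X\in C^0_\phg(\bar M)$ means $X\in \rho^3 C^0_\phg(\bar M)$; differentiating, $\Div_\lambda X$ picks up one derivative and one power of $\rho$ is spent by the conformal rescaling between $h$ and $\bar h$, so $\rho^{-2}\Div_\lambda X = \rho^{-\nu} f$ is polyhomogeneous with $\nu=2$ (one checks $\Div_\lambda$ of a $\rho^3$-polyhomogeneous vector field is $\rho^2$-polyhomogeneous — this is a direct computation in background coordinates using $\Div_\lambda X = \rho^3\Div_h(\rho^{-3}X) + (\text{lower order})$, or more simply by expanding in the $\bar\lambda$ connection). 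Since $\nu=2$ satisfies $|\nu-1|<\sqrt{1+c}=1$, the middle bullet of Proposition~\ref{PerturbedPoisson} gives $\rho^{-2}u\in C^0_\phg(\bar M)$. Then $\grad_\lambda u = \rho^2\grad_h u$ schematically, and since $\rho^{-2}u$ is polyhomogeneous, $\rho^{-3}\grad_\lambda u\in C^0_\phg(\bar M)$ by a direct computation (the gradient of a $\rho^2$-polyhomogeneous function against $\lambda$ is $\rho^3$-polyhomogeneous). Finally $\rho^{-3}Y = \rho^{-3}X - \rho^{-3}\grad_\lambda u\in C^0_\phg(\bar M)$ as the difference of two such fields.

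The main obstacle — or rather the only real bookkeeping — is tracking the weights carefully through $\Div_\lambda$ and $\grad_\lambda$: one must be consistent about whether a given space is measured against $h$ or $\bar h$, and verify that $\Div_\lambda$ sends $C^{k-1,\alpha}_\delta(M)$ to $C^{k-2,\alpha}_\delta(M)$ (not $C^{k-2,\alpha}_{\delta-1}$ or $C^{k-2,\alpha}_{\delta+1}$), which hinges on the fact that the Christoffel symbols of $\lambda$ relative to background coordinates behave like those of a uniformly degenerate operator and that $\nabla-\bar\nabla$ preserves the weighted Hölder spaces (used already in the proof of Lemma~\ref{TrivialCKF}). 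Once the weight arithmetic is pinned down, every assertion follows mechanically from Proposition~\ref{PerturbedPoisson}; no new analytic input is required.
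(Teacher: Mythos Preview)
Your existence and uniqueness argument is essentially identical to the paper's: solve $\Delta_\lambda u = \Div_\lambda X$ using Proposition~\ref{PerturbedPoisson} with $c=\kappa=0$, so that the weight condition $|\delta-1|<1$ is exactly $\delta\in(0,2)$, and set $Y=X-\grad_\lambda u$. Your uniqueness discussion is a bit more explicit than the paper's, but the content is the same.

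There is, however, a genuine slip in the polyhomogeneity argument. You assert that $f=\Div_\lambda X$ satisfies $\rho^{-\nu}f\in C^0_\phg(\bar M)$ with $\nu=2$, and then invoke the middle bullet of Proposition~\ref{PerturbedPoisson} via ``$|\nu-1|<1$''. But $|2-1|=1$, not $<1$, so the middle bullet does not apply; and the third bullet (the borderline case $\nu=1+\sqrt{1+c}=2$) only yields $\rho^{-\mu}u\in C^0_\phg(\bar M)$ for $\mu<2$, which is strictly weaker than the claimed $\rho^{-2}u\in C^0_\phg(\bar M)$. The paper closes this gap by invoking Lemma~A.5 of \cite{WAH-Preliminary}: since $\bar X:=\rho^{-3}X\in C^0_\phg(\bar M)$ and $\bar\lambda:=\rho^2\lambda\in C^2_\phg(\bar M)$, one actually has $\bar X\in C^{0,\gamma}_\phg(\bar M)$ and $\bar\lambda\in C^{2,\gamma}_\phg(\bar M)$ for some $\gamma>0$. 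Then $\Nabla{\bar\lambda}\colon C^{0,\gamma}_\phg(\bar M)\to \rho^{\gamma-1}C^0_\phg(\bar M)$ gives $f=\Div_\lambda X=\rho^3\Div_{\bar\lambda}\bar X\in\rho^{2+\gamma}C^0_\phg(\bar M)$, so $\nu=2+\gamma>2$ and the \emph{first} bullet applies, delivering $\rho^{-2}u\in C^0_\phg(\bar M)$ as needed. Once this is in hand, your remaining steps for $\grad_\lambda u$ and $Y$ go through.
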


\begin{proof}
Since $Y$ is divergence-free precisely if $\Delta_\lambda u = \Div_\lambda X$, the existence of $u$ and $Y$ is immediate.

Suppose now that $\bar X:=\rho^{-3}X\in C^0_\phg(\bar M)$ and that $\bar\lambda:= \rho^{2}\lambda\in C^2_\phg(\bar M)$.
By Lemma A.5 of \cite{WAH-Preliminary} we have $\bar X\in C^{0,\gamma}_\phg(\bar M)$ and $\bar \lambda \in C^{2,\gamma}_\phg(\bar M)$ for some $\gamma>0$.
Since $\Nabla{\bar\lambda}\colon C^{0,\gamma}_\phg(\bar M)\to \rho^{\gamma -1} C^0_\phg(\bar M)$ we have $f = \Div_\lambda X = \rho^3 \Div_{\bar\lambda}\bar X\in \rho^{2+\gamma}C^0_\phg(\bar M)$.
This implies the claimed regularity of $Y$ and of $\grad_\lambda u$.
\end{proof}

We now address the solvability of the Lichnerowicz equation \eqref{LichPhi}, recalling the following from \cite{WAH-Preliminary}.

\begin{proposition}[Proposition 6.4 of \cite{WAH-Preliminary}]
\label{SolveLichPhi}
Suppose that $\lambda \in \WAH^{k,\alpha;1}$ for  $k\geq 2$ and $\alpha\in (0,1)$.
Suppose furthermore that $A$ and $B$ are non-negative functions with $A,B\in C^{k-2,\alpha}_1( M)$.
\begin{enumerate}
\item 
\label{part:FirstSolveLichPhi}
Then there exists a unique function $\phi$ with $\phi-1\in C^{k,\alpha}_1(M)$ that satisfies
\begin{equation}
\label{SimpleLichPhi}
\begin{gathered}
\Delta_\lambda\phi = \frac18R[\lambda]\phi - A\phi^{-7} - B\phi^{-3} + \frac34\phi^5,
\\
\left.\phi\right|_{\partial M} =1,\qquad \phi>0.
\end{gathered}
\end{equation}
The regularity on $\phi$ implies $\phi^4\lambda \in \WAH^{k,\alpha;1}$.
\end{enumerate}
Furthermore:
\begin{enumerate}[resume]
\item
\label{part:simple-lich-better-r}
 If $\lambda \in \WAH^{k,\alpha;2}$, $\R[\lambda]+6\in C^{k-2,\alpha}_2(M)$, and $A,B\in C^{k-2,\alpha}_2(M)$, then $\phi-1\in C^{k,\alpha}_2(M)$ and hence $\phi^4 \lambda \in \WAH^{k,\alpha;2}$.
\item If $\rho^2\lambda \in C^2_\phg(\bar M)$ and $\rho^{-2}A, \rho^{-2}B\in C^0_\phg(\bar M)$,  then $\rho^2 \phi^4\lambda \in C^2_\phg(\bar M)$.
\end{enumerate}

\end{proposition}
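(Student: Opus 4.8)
The plan is to treat \eqref{SimpleLichPhi} as a semilinear equation for $\psi := \phi - 1$ and to obtain existence by the method of sub- and super-solutions combined with a monotone iteration, with uniqueness coming from the maximum principle; this is standard in spirit for Lichnerowicz-type equations, the weakly-asymptotically-hyperbolic elliptic theory of \cite{WAH-Preliminary} replacing the usual elliptic theory. Write $\mathcal F(\phi) := \tfrac18\R[\lambda]\phi - A\phi^{-7} - B\phi^{-3} + \tfrac34\phi^5$ for the right-hand side, so the equation reads $\Delta_\lambda\phi = \mathcal F(\phi)$. The key analytic input is Proposition \ref{PerturbedPoisson}: since $\R[\lambda]\to -6$ at $\partial M$ (indeed $\R[\lambda]+6\in C^{k-2,\alpha}_1(M)$ for $\lambda\in\WAH^{k,\alpha;1}$, by \cite{WAH-Preliminary}), the linearization $\mathcal F'(1) = \tfrac18\R[\lambda] + 7A + 3B + \tfrac{15}{4}$ equals $3$ at $\partial M$ up to a term in $C^{k-2,\alpha}_1(M)$, so $\Delta_\lambda - \Lambda$ is invertible on the weighted Hölder spaces with $\delta$ in an appropriate range for every constant $\Lambda\geq 0$; this drives both the iteration and the regularity.

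First I would dispose of uniqueness. If $\phi_1,\phi_2$ both solve \eqref{SimpleLichPhi}, each is positive and continuous on the compact manifold $\bar M$ (as $\phi_i-1\in C^{k,\alpha}_1(M)$ vanishes at $\partial M$), so $\phi_1/\phi_2$ attains its maximum. If that maximum exceeded $1$ it would be attained at an interior point $p$, where $\nabla(\phi_1/\phi_2)=0$ and $\Delta_\lambda(\phi_1/\phi_2)\leq 0$; but then
\[
\Delta_\lambda\!\left(\tfrac{\phi_1}{\phi_2}\right)\!\bigg|_p = \frac1{\phi_2}\left[A\phi_1^{-7}\!\left(\!\left(\tfrac{\phi_1}{\phi_2}\right)^{\!8}\!-1\right) + B\phi_1^{-3}\!\left(\!\left(\tfrac{\phi_1}{\phi_2}\right)^{\!4}\!-1\right) + \tfrac34\phi_1\phi_2^{4}\!\left(\!\left(\tfrac{\phi_1}{\phi_2}\right)^{\!4}\!-1\right)\right]_p > 0,
\]
a contradiction. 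Note the linear term $\tfrac18\R[\lambda]\phi$ drops out of $\mathcal F(\phi_1) - (\phi_1/\phi_2)\mathcal F(\phi_2)$, so no sign condition on $\R[\lambda]$ is needed; hence $\phi_1\leq\phi_2$, and by symmetry $\phi_1=\phi_2$.

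For existence, a large constant $\phi_+\equiv C$ is a supersolution, since $\R[\lambda]$ is bounded, $A,B\geq 0$, and the $\tfrac34 C^5$ term dominates. Producing a subsolution $\phi_-$ with $0<\phi_-\leq\phi_+$ and $\phi_-\to 1$ at $\partial M$ is the main obstacle: on any region where $\R[\lambda]\geq 0$ and $A=B\equiv 0$ the value $\mathcal F(\phi)$ is strictly positive for every $\phi>0$, so no constant works there. I would build $\phi_-$ from two pieces and patch. Near $\partial M$, where $\R[\lambda]\approx -6$ and $A,B\approx 0$, the favorable sign of $\mathcal F'(1)$ lets one take $1-w$ with $w\geq 0$ small, $w\to 0$, obtained from Proposition \ref{PerturbedPoisson}. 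In the interior I would take $\epsilon_0\psi_0$, where $\psi_0 = m + \tilde\zeta$ is a bounded, positive, \emph{strict} subsolution of $\Delta_\lambda - \tfrac18\R[\lambda]$: here $\tilde\zeta\in C^{k,\alpha}_\delta(M)$, $\delta\in(0,2)$, solves $\Delta_\lambda\tilde\zeta = c_0\chi$ for a large constant $c_0$ and a cutoff $\chi\equiv 1$ on the compact set $\{\R[\lambda]\geq -6+\epsilon_1\}$ (which exhausts $M$ since $\R[\lambda]\to-6$), so $\tilde\zeta\leq 0$ by the maximum principle; choosing $m\gg\|\tilde\zeta\|_{L^\infty}$ makes $\psi_0>0$, and then $c_0$ large and $\epsilon_0$ small make $\Delta_\lambda(\epsilon_0\psi_0)\geq \tfrac18\R[\lambda](\epsilon_0\psi_0) + \tfrac34(\epsilon_0\psi_0)^5\geq\mathcal F(\epsilon_0\psi_0)$, the last step using $-A\phi^{-7}-B\phi^{-3}\leq 0$. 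Setting $\phi_- := \max(\epsilon_0\psi_0,\,1-w)$ and $\phi_+ := \min(C,\,1+w')$ (a maximum of subsolutions is a subsolution, a minimum of supersolutions a supersolution) yields an ordered pair with both barriers tending to $1$ at $\partial M$. The monotone iteration $(\Delta_\lambda-\Lambda)\phi_{n+1} = \mathcal F(\phi_n) - \Lambda\phi_n$, started at $\phi_0=\phi_+$, each step solved in $1+C^{k,\alpha}_1(M)$ via Proposition \ref{PerturbedPoisson}, with $\Lambda$ chosen so that $t\mapsto\mathcal F(t)-\Lambda t$ is decreasing on $[\inf\phi_-,C]$, then produces a decreasing sequence squeezed between $\phi_-$ and $\phi_+$, whose limit $\phi$ solves \eqref{SimpleLichPhi} with $\phi\to 1$ at $\partial M$.

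It remains to upgrade regularity. With $\phi$ bounded between positive barriers and $\psi=\phi-1\to 0$ at $\partial M$, I would rewrite the equation as $\Delta_\lambda\psi - 3\psi = \bigl(\mathcal F(\phi)-3\psi\bigr)$ and bootstrap: the right-hand side lies in $C^{k-2,\alpha}_1(M)$ once one uses $\R[\lambda]+6,A,B\in C^{k-2,\alpha}_1(M)$, so iterating the isomorphisms of Proposition \ref{PerturbedPoisson} across weights gives $\psi\in C^{k,\alpha}_1(M)$; since $C^{k,\alpha}_1(M)\subset\mathscr C^{k,\alpha;1}(M)$ and this space is closed under products, $\rho^2\phi^4\lambda = (1+\psi)^4\rho^2\lambda\in\mathscr C^{k,\alpha;1}(M)$ with $|\D\rho|_{\phi^4\lambda}=1$ on $\partial M$, i.e.\ $\phi^4\lambda\in\WAH^{k,\alpha;1}$; this gives part~(a). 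For part~(b), the extra hypotheses $\lambda\in\WAH^{k,\alpha;2}$ and $\R[\lambda]+6,A,B\in C^{k-2,\alpha}_2(M)$ make $\mathcal F(\phi)-3\psi\in C^{k-2,\alpha}_2(M)$ once $\psi\in C^{k,\alpha}_1$ is known, so the isomorphism $\Delta_\lambda-3\colon C^{k,\alpha}_2(M)\to C^{k-2,\alpha}_2(M)$ together with injectivity on $C^{k,\alpha}_1(M)$ forces $\psi\in C^{k,\alpha}_2(M)$, hence $\phi^4\lambda\in\WAH^{k,\alpha;2}$. For part~(c), the polyhomogeneity clauses of Theorem \ref{Fred} and Proposition \ref{PerturbedPoisson}, applied to the linearized equation solved at each bootstrap step, propagate $\rho^2\lambda\in C^2_\phg(\bar M)$ and $\rho^{-2}A,\rho^{-2}B\in C^0_\phg(\bar M)$ to $\rho^2\phi^4\lambda\in C^2_\phg(\bar M)$. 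The genuinely delicate point is the interior subsolution; everything else is the maximum principle or the elliptic machinery of \cite{WAH-Preliminary}.
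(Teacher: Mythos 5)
First, a point of orientation: the paper does not prove this proposition at all --- it is imported verbatim as Proposition 6.4 of \cite{WAH-Preliminary} --- so there is no in-paper argument to compare against. Your overall strategy (monotone iteration between ordered barriers, uniqueness via the quotient $\phi_1/\phi_2$ at an interior maximum, then a linear bootstrap through the isomorphisms of Proposition \ref{PerturbedPoisson} to get the weighted regularity in parts (a)--(b) and polyhomogeneity in part (c)) is the standard route for Lichnerowicz-type equations on asymptotically hyperbolic manifolds and is surely close to what the cited reference does. The uniqueness computation is correct and complete (the cancellation of the $\R[\lambda]$ term in $\mathcal F(\phi_1)-(\phi_1/\phi_2)\mathcal F(\phi_2)$ is exactly the right observation), the large-constant supersolution is fine, your interior subsolution $\epsilon_0\psi_0$ is in fact a \emph{global} positive subsolution as you set it up, and the two-stage regularity upgrade (injectivity of $\Delta_\lambda-3$ on $C^{k,\alpha}_1$ versus surjectivity from $C^{k,\alpha}_2$) is the right mechanism for part (b).

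The genuine gap is the patching, which you correctly flag as the delicate point but then dispose of too quickly. For $\phi_-=\max(\epsilon_0\psi_0,\,1-w)$ to be a subsolution, the piece $1-w$ must be a subsolution on a neighborhood of every point where it realizes the maximum. But $1-w$ is only a subsolution in a collar near $\partial M$ (outside the collar the sign of $\mathcal F'$ and the smallness of the quadratic remainder are both lost), while $\epsilon_0\psi_0\le\epsilon_0 m$ is uniformly small; so if $w$ is uniformly small --- as it is when obtained by inverting $\Delta_\lambda-3$ against the decaying right-hand side $\mathcal F(1)\in C^{k-2,\alpha}_1(M)$ --- then $1-w>\epsilon_0\psi_0$ \emph{everywhere}, the maximum collapses to $1-w$ globally, and $\phi_-$ fails to be a subsolution in the interior. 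The same defect afflicts $\phi_+=\min(C,\,1+w')$. To repair this one must force the boundary barrier to cross the interior one strictly inside the collar where it is valid, e.g.\ by replacing $1-w$ with $1-w-t\rho^{s}$ for suitable $t$ large and $s\in(0,1)$ (checking that the added term preserves the differential inequality on a collar whose inner edge it reaches with value below $\epsilon_0\inf\psi_0$), or by first conformally deforming so that $\R[\lambda]<0$ everywhere, after which a small constant is a global subsolution and only the boundary rate needs correcting. Without some such device the ordered pair of barriers, and hence the existence half of part (a), is not established. The remainder of your argument (monotone iteration, bootstrap, parts (b) and (c)) goes through once correct barriers are in hand.
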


\section{Continuity results for the conformal method}
\label{PDE-Continuity}

In Section \ref{Section-PandE} below, we frame the conformal method as maps taking free data sets to seed data sets, and taking seed data sets to initial data sets.
Here we establish a collection of results that we use to show the continuity of these maps.
We first recall the following definitions.
Suppose $X$ and $Y$ are normed spaces, and $F\colon X\to Y$.
We say that $F$ is \Defn{locally bounded} if for each $x\in X$ there exist constants $\epsilon_x>0$ and $C_x>0$ such that
\begin{equation*}
\| F(x^\prime) \|_Y \leq C_x
\quad\text{ if }\quad
\|x-x^\prime \|_X \leq \epsilon_x.
\end{equation*}
We furthermore say that $F$ is \Defn{locally Lipschitz} if
for each $x\in X$ there exist constants $\epsilon_x>0$ and $C_x>0$ such that
\begin{equation*}
\| F(x_1)- F(x_2)\|_Y \leq C_x \|x_1 - x_2\|_X
\quad\text{ if }\quad
\|x-x_i \|_X \leq \epsilon_x, \quad i=1,2.
\end{equation*}
Note that all locally Lipschitz mappings are locally bounded, and that all continuous linear  maps are locally (in fact, globally) Lipschitz.

The facts that $\mathscr C^{k,\alpha;m}(M)$ is an algebra and $\rho\in C^\infty(\bar M)$ imply the following.
\begin{lemma}
\label{lemma:BasicLipschitzFacts}

Suppose $0\leq m \leq k$, $0\leq m^\prime \leq k^\prime$, and $\alpha, \alpha^\prime \in [0,1)$.
\begin{enumerate}
\item If $F_1$ and $F_2$ are locally Lipschitz maps $\colon \mathscr C^{k,\alpha;m}(M) \to \mathscr C^{k^\prime, \alpha^\prime, m^\prime}(M)$, then so is $F_1\otimes F_2$.

\item Contraction is a continuous linear map $\mathscr C^{k,\alpha;m}(M) \to \mathscr C^{k,\alpha;m}(M)$.

\item Multiplication by $\rho$ is a continuous linear map $\mathscr C^{k,\alpha;m}(M) \to \mathscr C^{k,\alpha;m+1}(M)$.

\item For any $l\geq 1$, $u\mapsto u\otimes \bar\nabla{}^l\rho$ is a continuous linear map $\mathscr C^{k,\alpha;m}(M)\to \mathscr C^{k,\alpha;m}(M)$.

\item The map  $\bar\nabla \colon \mathscr C^{k+1,\alpha;m+1}(M) \to \mathscr C^{k,\alpha;m}(M)$ is a continuous linear map.

\end{enumerate}
\end{lemma}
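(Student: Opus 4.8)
The plan is to reduce all five parts to the defining formula \eqref{DefineTripleNorm} for $\gNorm{\cdot}_{k,\alpha;m}$ together with three standard inputs: that $\mathscr C^{k,\alpha;m}(M)$ is a multiplicative algebra, so that tensor product and contraction act boundedly with a uniform constant; that $\bar\nabla\colon C^{j,\alpha}_\delta(M)\to C^{j-1,\alpha}_\delta(M)$ is bounded (as used in the proof of Lemma \ref{TrivialCKF}); and that $\rho\in C^\infty(\bar M)$, so that each $\bar\nabla{}^l\rho$ is a smooth covariant $l$-tensor on $\bar M$ and hence lies in $C^{j,\alpha}_l(M)$ for every $j$ and $\alpha$ (cf.\ \cite{WAH-Preliminary}), while multiplication by $\rho$---a weight-zero function vanishing on $\partial M$---maps $C^{j,\alpha}_\delta(M)$ into $C^{j,\alpha}_{\delta+1}(M)$.

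Parts (b) and (e) I would dispatch directly. For (e), if $u$ has weight $r$ then $\bar\nabla u$ has weight $r+1$, and the $l$-th term of $\gNorm{\bar\nabla u}_{k,\alpha;m}$ is $\|\bar\nabla{}^{l+1}u\|_{C^{(k+1)-(l+1),\alpha}_{r+(l+1)}}$, i.e.\ the $(l+1)$-st term of $\gNorm{u}_{k+1,\alpha;m+1}$; summing over $0\le l\le m$ gives $\gNorm{\bar\nabla u}_{k,\alpha;m}\le\gNorm{u}_{k+1,\alpha;m+1}$. For (b), contraction commutes with the metric connection $\bar\nabla$, preserves the weight $r=p-q$, and obeys a pointwise bound $|\tr T|_{\bar h}\le C|T|_{\bar h}$, so it acts boundedly on each summand of \eqref{DefineTripleNorm}. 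For (a) I would use that locally Lipschitz maps are locally bounded, pick a neighbourhood of the given point on which $F_1,F_2$ are simultaneously bounded and Lipschitz, write $F_1(x_1)\otimes F_2(x_1)-F_1(x_2)\otimes F_2(x_2)=[F_1(x_1)-F_1(x_2)]\otimes F_2(x_1)+F_1(x_2)\otimes[F_2(x_1)-F_2(x_2)]$, and bound each term in $\mathscr C^{k^\prime,\alpha^\prime;m^\prime}(M)$ using the algebra estimate together with those constants.

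Parts (c) and (d) are the substantive ones; for both I would expand $\bar\nabla{}^l$ of the product by the Leibniz rule and estimate each resulting term $\bar\nabla{}^i(\,\cdot\,)\otimes\bar\nabla{}^{l-i}\rho$ in a weighted H\"older space, then verify that the surviving H\"older order $k-i\ge k-l$ and that the weights add up to at least what \eqref{DefineTripleNorm} demands. The main obstacle is the top-order term of (c): when $l=m+1$ the Leibniz expansion contains $\rho\,\bar\nabla{}^{m+1}u$, and $\bar\nabla{}^{m+1}u$ is \emph{not} controlled by $\gNorm{u}_{k,\alpha;m}$ in the space $C^{k-m-1,\alpha}_{r+m+1}(M)$ that would be needed there---only, via one application of $\bar\nabla$ to $\bar\nabla{}^m u\in C^{k-m,\alpha}_{r+m}(M)$, in $C^{k-m-1,\alpha}_{r+m}(M)$. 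This is precisely where the vanishing of $\rho$ at $\partial M$ enters: multiplication by $\rho$ upgrades $C^{k-m-1,\alpha}_{r+m}(M)$ to $C^{k-m-1,\alpha}_{r+m+1}(M)=C^{k-(m+1),\alpha}_{r+(m+1)}(M)$, recovering the missing weight and yielding the gain $m\mapsto m+1$. In (d) the tensors $\bar\nabla{}^l\rho$ with $l\ge1$ do not vanish at $\partial M$, so no such upgrade is available and the same bookkeeping lands the product only in $\mathscr C^{k,\alpha;m}(M)$, as stated. In each case the estimates are manifestly linear in the argument (and, in (a), manifestly locally Lipschitz), so continuity follows at once.
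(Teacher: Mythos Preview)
Your proposal is correct and follows essentially the same approach as the paper: the paper offers no proof beyond the sentence preceding the lemma (``The facts that $\mathscr C^{k,\alpha;m}(M)$ is an algebra and $\rho\in C^\infty(\bar M)$ imply the following''), and your argument is precisely the natural unpacking of that sentence, invoking exactly those two ingredients together with the boundedness of $\bar\nabla\colon C^{j,\alpha}_\delta(M)\to C^{j-1,\alpha}_\delta(M)$, which the paper uses elsewhere. Your identification of the key point in~(c)---that the top-order term $\rho\,\bar\nabla{}^{m+1}u$ is salvaged by the extra weight coming from $\rho$---is the only nontrivial observation hidden in the paper's one-line justification.
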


We now show that the construction of various geometric tensor fields from weakly asymptotically hyperbolic metrics is locally Lipschitz.

\begin{lemma}
\label{lemma:cont-inverse}
Suppose $0\leq m \leq k$ and $\alpha \in[0,1)$.
The map $\lambda \mapsto (\bar \lambda)^{-1}$ is a locally Lipschitz continuous map from $\WAH^{k,\alpha;m}$ to $\mathscr C^{k,\alpha;m}(M)$.
\end{lemma}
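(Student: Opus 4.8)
The plan is to reduce the claim to the algebra structure of $\mathscr C^{k,\alpha;m}(M)$ (Lemma \ref{lemma:BasicLipschitzFacts}) together with the standard fact that matrix inversion is a locally Lipschitz (indeed smooth) operation on invertible matrices. Working in a fixed finite atlas of background coordinate charts on $\bar M$, the metric $\bar\lambda$ is represented by a symmetric matrix-valued function $(\bar\lambda_{ij})$, and by definition of $\WAH^{k,\alpha;m}$ these components, together with $m$ of their $\bar\nabla$-derivatives, lie in the appropriate weighted Hölder spaces; moreover non-degeneracy on the \emph{compact} manifold $\bar M$ gives a uniform lower bound $\det(\bar\lambda_{ij})\geq c_0>0$ on each chart, and this bound is stable under small $\mathscr C^{k,\alpha;m}$-perturbations of $\lambda$. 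Hence for $\lambda'$ near $\lambda$ the inverse matrix $(\bar\lambda')^{ij}$ is given by (cofactors)/(determinant), an expression built from finitely many products, sums, and one reciprocal of the entries $\bar\lambda'_{kl}$.

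**Key steps, in order.** First I would observe that since $\mathscr C^{k,\alpha;m}(M)$ is an algebra (with continuous, hence locally Lipschitz, multiplication by Lemma \ref{lemma:BasicLipschitzFacts}(a)–(b)) and is closed under addition, any \emph{polynomial} map applied to the components of $\bar\lambda$ is locally Lipschitz into $\mathscr C^{k,\alpha;m}(M)$; in particular the cofactor entries depend locally-Lipschitz-continuously on $\lambda$. Second, I would handle the reciprocal of the determinant: writing $D=\det(\bar\lambda_{ij})$ and $D'$ for the perturbed determinant, one has $D^{-1}-(D')^{-1} = (D'-D)/(DD')$, and since $D,D'$ are bounded below by $c_0$ on the relevant neighborhood and $D\mapsto D^{-1}$ is smooth on $[c_0,\infty)$, the composition $\lambda\mapsto D^{-1}$ is locally Lipschitz $\WAH^{k,\alpha;m}\to\mathscr C^{k,\alpha;m}(M)$ (one needs here that $\mathscr C^{k,\alpha;m}(M)$ contains constants and is a Banach algebra so that $g\mapsto g^{-1}$ is locally Lipschitz on the set $\{g\geq c_0\}$ — this follows from the algebra property exactly as for any commutative Banach algebra). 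Third, combining these two via Lemma \ref{lemma:BasicLipschitzFacts}(a) shows each component $(\bar\lambda')^{ij}$ is a locally Lipschitz function of $\lambda'$ with values in $\mathscr C^{k,\alpha;m}(M)$; since the weight of $(\bar\lambda)^{-1}$ is $-2$, matching that of $\bar h^{-1}$, the chart-wise component bounds assemble into a genuine $\mathscr C^{k,\alpha;m}(M)$ bound on the $(2,0)$-tensor $(\bar\lambda)^{-1}$, and the chart transition functions (being smooth and fixed) preserve all the estimates.

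**Main obstacle.** The only genuinely delicate point is the reciprocal-of-determinant step: one must be careful that the lower bound $\det(\bar\lambda_{ij})\geq c_0$ really is \emph{uniform over $\bar M$} (this is where compactness of $\bar M$ and non-degeneracy \emph{up to the boundary}, built into the definition of $\WAH^{k,\alpha;m}$, are used) and that it persists for all $\lambda'$ in a fixed $\mathscr C^{k,\alpha;m}$-neighborhood of $\lambda$ — for this one uses that the embedding $\mathscr C^{k,\alpha;m}(M)\hookrightarrow L^\infty(\bar M)$ is continuous, so a small $\mathscr C^{k,\alpha;m}$-perturbation of the components is a small $L^\infty(\bar M)$-perturbation, keeping $\det$ away from zero. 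Once the uniform ellipticity bound is secured, everything else is a routine application of the Banach-algebra structure and of Lemma \ref{lemma:BasicLipschitzFacts}; I would not expect to need the continuity of $\bar\nabla$ (part (e)) here, since the $\bar\nabla$-derivatives of the components are already encoded in membership in $\mathscr C^{k,\alpha;m}(M)$.
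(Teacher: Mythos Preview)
Your proposal is correct, but it takes a different route from the paper. You work in a finite atlas of background coordinate charts, express the inverse via Cramer's rule (cofactors divided by determinant), and argue that each ingredient---polynomials in the entries $\bar\lambda_{ij}$ and the reciprocal of $\det(\bar\lambda_{ij})$, the latter being bounded below by compactness and non-degeneracy---is locally Lipschitz in the Banach algebra $\mathscr C^{k,\alpha;m}(M)$. The paper instead gives a coordinate-free Neumann-series argument: one writes
\[
(\bar\lambda_1)^{-1}-(\bar\lambda_2)^{-1}
= (\bar\lambda_1)^{-1}\left(1-\bigl[1-(\bar\lambda_1-\bar\lambda_2)(\bar\lambda_1)^{-1}\bigr]^{-1}\right),
\]
and expands the bracketed inverse as a geometric series which converges absolutely in the algebra $\mathscr C^{k,\alpha;m}(M)$ once $\|\bar\lambda_1-\bar\lambda_2\|$ is small. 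Both arguments ultimately rest on the same fact---that $\mathscr C^{k,\alpha;m}(M)$ is a Banach algebra (Lemma~\ref{lemma:BasicLipschitzFacts})---so neither is more general than the other; the paper's version is shorter and avoids the (harmless but extra) bookkeeping of charts and transition functions, while yours makes the role of the uniform lower bound on $\det(\bar\lambda_{ij})$ more explicit. Your identification of the reciprocal step as the only delicate point is accurate, and your handling of it (via the continuous embedding $\mathscr C^{k,\alpha;m}(M)\hookrightarrow L^\infty(\bar M)$ to propagate the lower bound to nearby metrics) is correct.
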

\begin{proof}
First note that if $\lambda\in \WAH^{k,\alpha;m}$ then $\|(\bar \lambda_1)^{-1}\|_{C^{k,\alpha}_{-2}(M)}$ is bounded.
Thus the lemma follows from estimating in $C^{k,\alpha}_{-2}(M)$ the series expansion of
\begin{equation}
(\bar \lambda_1)^{-1} - (\bar \lambda_2)^{-1} 
= (\bar \lambda_1)^{-1}\left(1-\left[1-(\bar \lambda_1-\bar \lambda_2)(\bar \lambda_1)^{-1}\right]^{-1}\right),
\end{equation}
which converges uniformly and absolutely when $\|\bar\lambda_1 - \bar\lambda_2 \|_{C^0(M)}$ is small.
\end{proof}

We estimate the connections of weakly asymptotically hyperbolic metrics by comparing them to the connection $\bar\nabla$ associated to the background metric $\bar h$.
In doing this, we  use the notation $D[\bar\lambda] = \Nabla{\bar\lambda} - \bar\nabla$ for the difference tensor.

\begin{lemma}
\label{lemma:BarConnectionLip}
Suppose $1\leq m \leq k$ and $\alpha \in[0,1)$.
The map $\lambda \mapsto D[\bar\lambda]$ is locally Lipschitz continuous from $\WAH^{k,\alpha;m}$ to $\mathscr C^{k-1,\alpha;m-1}(M)$.
\end{lemma}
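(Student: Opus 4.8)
The plan is to reduce the statement to the formula for the Christoffel symbols in terms of the metric and then invoke the algebra and Lipschitz properties of the $\mathscr C^{k,\alpha;m}$ spaces collected in Lemmas \ref{lemma:BasicLipschitzFacts} and \ref{lemma:cont-inverse}. Recall that in a fixed background coordinate system, if $\Nabla{\bar\lambda}$ has Christoffel symbols $\Gamma[\bar\lambda]$ and $\bar\nabla$ has Christoffel symbols $\Gamma[\bar h]$, then the difference tensor has components
\begin{equation*}
D[\bar\lambda]^c_{ab} = \tfrac12 (\bar\lambda)^{cd}\left( \bar\nabla_a \bar\lambda_{bd} + \bar\nabla_b \bar\lambda_{ad} - \bar\nabla_d \bar\lambda_{ab}\right).
\end{equation*}
Schematically, $D[\bar\lambda] = (\bar\lambda)^{-1} * \bar\nabla \bar\lambda$, a contraction of $(\bar\lambda)^{-1}$ with $\bar\nabla\bar\lambda$.

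First I would record the two factors' regularity. If $\lambda\in\WAH^{k,\alpha;m}$, then by definition $\bar\lambda = \rho^2\lambda \in \mathscr C^{k,\alpha;m}(M)$, and $\lambda\mapsto \bar\lambda$ is (globally) Lipschitz into $\mathscr C^{k,\alpha;m}(M)$ — indeed it is just multiplication by the fixed smooth function $\rho^2$, which by Lemma \ref{lemma:BasicLipschitzFacts}(c) (applied twice) is continuous linear. By Lemma \ref{lemma:BasicLipschitzFacts}(e), $\bar\nabla\colon \mathscr C^{k,\alpha;m}(M)\to\mathscr C^{k-1,\alpha;m-1}(M)$ is continuous linear, so $\lambda\mapsto\bar\nabla\bar\lambda$ is locally (in fact globally) Lipschitz into $\mathscr C^{k-1,\alpha;m-1}(M)$. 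By Lemma \ref{lemma:cont-inverse}, $\lambda\mapsto(\bar\lambda)^{-1}$ is locally Lipschitz from $\WAH^{k,\alpha;m}$ into $\mathscr C^{k,\alpha;m}(M)$, hence a fortiori into $\mathscr C^{k-1,\alpha;m-1}(M)$ since the norm $\gNorm{\cdot}_{k-1,\alpha;m-1}$ is dominated by $\gNorm{\cdot}_{k,\alpha;m}$ (one is summing fewer terms, each of a weaker or equal Hölder order).

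Next I would combine the factors. By Lemma \ref{lemma:BasicLipschitzFacts}(a), the tensor product of two locally Lipschitz maps into $\mathscr C^{k-1,\alpha;m-1}(M)$ is again locally Lipschitz into $\mathscr C^{k-1,\alpha;m-1}(M)$; here one should take the product of the (constant-coefficient rescalings of the) maps $\lambda\mapsto(\bar\lambda)^{-1}$ and $\lambda\mapsto\bar\nabla\bar\lambda$. Then Lemma \ref{lemma:BasicLipschitzFacts}(b) says the contractions producing $D[\bar\lambda]$ from this product are continuous linear maps $\mathscr C^{k-1,\alpha;m-1}(M)\to\mathscr C^{k-1,\alpha;m-1}(M)$. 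Composing, $\lambda\mapsto D[\bar\lambda]$ is locally Lipschitz from $\WAH^{k,\alpha;m}$ to $\mathscr C^{k-1,\alpha;m-1}(M)$, as claimed. The hypothesis $m\geq 1$ is exactly what is needed so that $\bar\nabla\bar\lambda$ lands in a space with a nonnegative number of "extra derivatives," i.e.\ so that $\mathscr C^{k-1,\alpha;m-1}(M)$ makes sense.

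The main (and essentially only) subtlety is bookkeeping with the coordinate-dependent definition of $D[\bar\lambda]$: one must make sure the schematic expression $(\bar\lambda)^{-1}*\bar\nabla\bar\lambda$ is organized as an honest tensor-valued composition of the maps appearing in Lemma \ref{lemma:BasicLipschitzFacts}, rather than a coordinatewise identity, so that the algebra properties of $\mathscr C^{k,\alpha;m}(M)$ apply; but since $D[\bar\lambda]$ is a genuine tensor (the difference of two connections), this presents no real difficulty — one simply writes $D[\bar\lambda]$ invariantly as the contraction of $(\bar\lambda)^{-1}$ against the symmetrized covariant derivative $\bar\nabla\bar\lambda$ and quotes the lemma. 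No local boundedness argument beyond what is packaged in Lemma \ref{lemma:cont-inverse} is needed, since the only nonlinear ingredient is the matrix inverse.
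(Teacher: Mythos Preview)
Your proof is correct and takes essentially the same approach as the paper: write $D[\bar\lambda]$ as contractions of $(\bar\lambda)^{-1}\otimes\bar\nabla\bar\lambda$ and invoke the algebra properties of $\mathscr C^{k,\alpha;m}(M)$. If anything, your version is more careful, since you explicitly cite Lemma~\ref{lemma:cont-inverse} for the local Lipschitz continuity of $\lambda\mapsto(\bar\lambda)^{-1}$, whereas the paper's one-line proof refers only to Lemma~\ref{lemma:BasicLipschitzFacts}.
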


\begin{proof}
As the tensor $D[\bar\lambda]$ is a sum of contractions of terms of the form $(\bar\lambda)^{-1}\otimes \bar\nabla\, \bar\lambda$, the proof follows directly from Lemma \ref{lemma:BasicLipschitzFacts}.
\end{proof}

\begin{remark}
\label{rmk:BasicNoBar}
Suppose $0\leq m \leq k$ and $\alpha \in [0,1)$.
Arguments analogous to those above show that $\lambda \mapsto \lambda^{-1}$ is a locally Lipschitz map from $\WAH^{k,\alpha;m}$ to $C^{k,\alpha}(M)$ and that $\lambda \mapsto \Nabla{\lambda} - \nabla$ is a locally Lipschitz map from $\WAH^{k+1,\alpha;m}$ to $C^{k,\alpha}(M)$, using the fact that $\nabla$ is a continuous map from $C^{k+1,\alpha}(M)$ to $C^{k,\alpha}(M)$.
\end{remark}

We also require the following.

\begin{lemma}
\label{lemma:cont-hess-etc}
Suppose $1\leq m \leq k$ and $\alpha \in [0,1)$.
The following maps are locally Lipschitz:
\begin{enumerate}
\item 
\label{part:hessian}
$\lambda \mapsto \Hess_{\bar\lambda}(\rho)$, as a map from $\WAH^{k,\alpha;m}$ to $ \mathscr C^{k-1,\alpha;m-1}(M)$,

\item $\lambda \mapsto \B_{\bar\lambda}(\rho)$, as a map from $\WAH^{k,\alpha;m}$ to $ \mathscr C^{k-1,\alpha;m-1}(M)$,
and 

\item
$\lambda \mapsto \Div_\lambda(\rho^{-1}\B_{\bar\lambda}(\rho))$, as a map from $\WAH^{k,\alpha;m}$ to $  C^{k-2,\alpha}_{2}(M)$ provided $2\leq m \leq k$.

\end{enumerate}
\end{lemma}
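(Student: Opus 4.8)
The plan is to express each of the three tensor fields as a sum of contractions of factors that Lemma \ref{lemma:BasicLipschitzFacts}, Lemma \ref{lemma:cont-inverse}, Lemma \ref{lemma:BarConnectionLip}, and Remark \ref{rmk:BasicNoBar} have already shown to depend locally Lipschitz-continuously on $\lambda$, and then invoke the algebra properties of the spaces $\mathscr C^{k',\alpha';m'}(M)$. For part \eqref{part:hessian}, I would write
\begin{equation*}
\Hess_{\bar\lambda}(\rho) = \bar\nabla\D\rho - D[\bar\lambda]\cdot \D\rho,
\end{equation*}
schematically. The first term is a fixed element of $\mathscr C^{k,\alpha;m}(M)$ (indeed of $C^{k,\alpha}_2(M)$ for every $k$) and is independent of $\lambda$, hence trivially locally Lipschitz; the second is a contraction of $D[\bar\lambda] \in \mathscr C^{k-1,\alpha;m-1}(M)$, which depends locally Lipschitz-continuously on $\lambda$ by Lemma \ref{lemma:BarConnectionLip}, with the fixed covector $\D\rho = \bar\nabla\rho \in \mathscr C^{k-1,\alpha;m-1}(M)$. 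By parts (a), (b), and (d) of Lemma \ref{lemma:BasicLipschitzFacts} (using $\D\rho = \bar\nabla^1\rho$), the product and contraction land in $\mathscr C^{k-1,\alpha;m-1}(M)$ and the dependence on $\lambda$ is locally Lipschitz.

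For part (b), I would use the definition \eqref{DefineB} of $\B_{\bar\lambda}(\rho)$, which expresses it schematically via the Ahlfors operator $\mathcal D_{\bar\lambda}$ and the scalar $A_{\bar\lambda}(\rho)$. Both of these are built from $(\bar\lambda)^{-1}$, $D[\bar\lambda]$, $\bar\nabla(\bar\lambda)^{-1}$ (equivalently another copy of $(\bar\lambda)^{-1}$ times $\bar\nabla\bar\lambda$), the fixed factors $\bar\lambda$, $\D\rho$, $\bar\nabla\D\rho$, and the function $|\D\rho|_{\bar\lambda}^2 = (\bar\lambda)^{-1}(\D\rho,\D\rho)$ together with its positive powers and—crucially—its inverse powers. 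The one subtlety is that $\B_{\bar\lambda}(\rho)$ involves $|\D\rho|_{\bar\lambda}^{-2}$, so I need local Lipschitz dependence of $\lambda \mapsto |\D\rho|_{\bar\lambda}^{-2}$. Since $|\D\rho|_{\bar\lambda}$ is bounded below near $\partial M$ (it equals $1$ there, and $\mathscr C^{k,\alpha;1}$ control of $\bar\lambda$ gives a uniform positive lower bound on a collar), and bounded below on compact subsets of $M$ by non-degeneracy of $\lambda$, the same Neumann-series argument as in the proof of Lemma \ref{lemma:cont-inverse} gives that $|\D\rho|_{\bar\lambda}^{2}$ is bounded away from zero locally uniformly in $\lambda$, hence its reciprocal is locally Lipschitz into $\mathscr C^{k-1,\alpha;m-1}(M)$. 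Once that is in hand, part (b) follows by repeated application of Lemma \ref{lemma:BasicLipschitzFacts}(a),(b),(d) exactly as in part (a), noting that $\mathcal D_{\bar\lambda}(|\D\rho|_{\bar\lambda}^{-2}\grad_{\bar\lambda}\rho)$ requires one derivative of $(\bar\lambda)^{-1}$ and hence costs one unit of the $m$-index.

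For part (c), I would write $\Div_\lambda(\rho^{-1}\B_{\bar\lambda}(\rho))$ by first noting that $\rho^{-1}\B_{\bar\lambda}(\rho)$, as an abstract tensor, has weight one less than $\B_{\bar\lambda}(\rho)$, and that its divergence with respect to $\lambda$ can be written (using the difference tensor $\Nabla{\lambda}-\nabla$ of Remark \ref{rmk:BasicNoBar}) as a contraction of $\nabla(\rho^{-1}\B_{\bar\lambda}(\rho))$ plus $(\Nabla{\lambda}-\nabla)\cdot \rho^{-1}\B_{\bar\lambda}(\rho)$; alternatively, and more cleanly in the barred setting, I would use the conformal relation between $\Div_\lambda$ and $\Div_{\bar\lambda}$ together with Proposition \ref{B-in-WAH}(2), whose proof already shows $\bar\nabla\B_{\bar\lambda}(\rho)\in C^{k-2,\alpha}_3(M)$ and $\Div_{\bar\lambda}\B_{\bar\lambda}(\rho)\in C^{k-2,\alpha}_1(M)$ when $m\geq 2$. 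The hypothesis $m\geq 2$ is exactly what allows one more derivative of $\bar\lambda$ to be taken while staying in a bounded space: $\bar\nabla\B_{\bar\lambda}(\rho)$ involves $\bar\nabla^2\bar\lambda$ and $\bar\nabla D[\bar\lambda]$, which are controlled only under $\mathscr C^{k,\alpha;2}$ (i.e. $m\ge 2$). Tracking weights: $\B_{\bar\lambda}(\rho)$ has weight $2$ (abstractly), $\rho^{-1}\B_{\bar\lambda}(\rho)$ has weight $2$ as well after accounting for the two ways $\rho$ enters, and $\Div_\lambda$ of a weight-$r$ tensor in $C^{k-1,\alpha}_{r}(M)$ with Lipschitz-continuous metric dependence lands in $C^{k-2,\alpha}_{r}(M)$; I would confirm the arithmetic gives target weight exactly $2$ as stated, using Lemma \ref{lemma:BasicLipschitzFacts}(c),(d) and part (b) of the present lemma together with Remark \ref{rmk:BasicNoBar}.

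The main obstacle is the bookkeeping in part (c): one must check that the divergence really maps into the claimed weighted space $C^{k-2,\alpha}_2(M)$ (rather than something with a worse weight coming from differentiating the $\rho^{-1}$ and the $|\D\rho|_{\bar\lambda}^{-2}$ factors), and that every intermediate factor still depends locally Lipschitz-continuously on $\lambda$—in particular that the step $\bar\nabla\B_{\bar\lambda}(\rho)$, which consumes the $m\ge 2$ hypothesis, is itself locally Lipschitz into $C^{k-2,\alpha}_3(M)$. This last point is not merely quoting Proposition \ref{B-in-WAH}(2), which is a statement about membership for a fixed $\lambda$; I would need to revisit its proof and observe that every estimate there is in fact a locally uniform (Neumann-series type) estimate, so the bound is continuous in $\lambda$. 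Everything else is a mechanical application of Lemma \ref{lemma:BasicLipschitzFacts}.
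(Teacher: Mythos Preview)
Your approach to part~\eqref{part:hessian} is exactly the paper's, and your overall strategy of assembling everything from the building-block lemmas is the right one.

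For part~(b) there is a genuine simplification you miss. The paper observes that when you expand \eqref{DefineB}, the negative powers of $|\D\rho|_{\bar\lambda}^2$ cancel: every term of $\B_{\bar\lambda}(\rho)$ is a contraction of copies of $(\bar\lambda)^{-1}$, $\bar\lambda$, $\D\rho$, and $\Hess_{\bar\lambda}\rho$, with no $|\D\rho|_{\bar\lambda}^{-2}$ anywhere. (For example, $A_{\bar\lambda}(\rho)=\tfrac12|\D\rho|^2_{\bar\lambda}\Delta_{\bar\lambda}\rho+\tfrac14\grad_{\bar\lambda}\rho(|\D\rho|^2_{\bar\lambda})$ is manifestly polynomial; similarly for the $|\D\rho|^6\mathcal D_{\bar\lambda}(\cdot)$ term.) Your Neumann-series argument for $|\D\rho|_{\bar\lambda}^{-2}$ would work, but it is unnecessary.

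For part~(c) there is a real gap: you correctly flag the danger that differentiating $\rho^{-1}$ (and the conformal-change term in passing from $\Div_\lambda$ to $\Div_{\bar\lambda}$) could spoil the weight, but you do not identify the mechanism that removes it. The paper uses Proposition~\ref{B-BasicProperties}\eqref{B-TransverseProperty}, $\B_{\bar\lambda}(\rho)(\grad_{\bar\lambda}\rho,\cdot)=0$, which kills \emph{both} problematic terms at once and yields the clean identity
\[
\Div_\lambda\!\big(\rho^{-1}\B_{\bar\lambda}(\rho)\big)
=\rho^{-1}\Div_\lambda\B_{\bar\lambda}(\rho)
=\rho\,\Div_{\bar\lambda}\B_{\bar\lambda}(\rho).
\]
From here one simply notes that $\lambda\mapsto(\bar\lambda)^{-1}\otimes\Nabla{\bar\lambda}\B_{\bar\lambda}(\rho)$ is locally Lipschitz into $\mathscr C^{k-2,\alpha;m-2}(M)$ (this is where $m\ge2$ is used), and since the resulting covector has weight~$1$, multiplication by $\rho$ lands in $C^{k-2,\alpha}_2(M)$. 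Your plan to ``revisit the proof of Proposition~\ref{B-in-WAH}(2)'' is not needed once you have part~(b) of the present lemma and Lemma~\ref{lemma:BarConnectionLip}: the Lipschitz dependence of $\Nabla{\bar\lambda}\B_{\bar\lambda}(\rho)=\bar\nabla\B_{\bar\lambda}(\rho)-D[\bar\lambda]\cdot\B_{\bar\lambda}(\rho)$ follows directly. Also, your ``weight~$2$ after accounting for the two ways $\rho$ enters'' remark conflates tensor weight with the H\"older-space weight index; keep these separate.
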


\begin{proof}
The first claim follows from writing $\Hess_{\bar\lambda}(\rho) = D[\bar\lambda]\D\rho + \Hess_{\bar h}(\rho)$ and Lemma \ref{lemma:BarConnectionLip}, while the second claim follows from the first and from the fact that each term in $\B_{\bar\lambda}(\rho)$ is a contraction of
\begin{equation*}
(\bar\lambda)^{-1}\otimes
(\bar\lambda)^{-1}\otimes
(\bar\lambda)^{-1}\otimes
(\bar\lambda)^{-1}\otimes
\bar\lambda\otimes
\D\rho\otimes
\D\rho\otimes
\D\rho\otimes
\D\rho\otimes
\Hess_{\bar\lambda}(\rho).
\end{equation*}
For the third claim, we note that Proposition \ref{B-BasicProperties}\eqref{B-TransverseProperty} and the formula for divergence under conformal change imply
\begin{equation}
\Div_\lambda (\rho^{-1}\B_{\bar\lambda}(\rho)) 
= \rho^{-1}\Div_\lambda \B_{\bar\lambda}(\rho) 
= \rho
\Div_{\bar\lambda}\B_{\bar\lambda}(\rho),
\end{equation}
which is a contraction of $\rho(\bar\lambda)^{-1}\otimes \Nabla{\bar\lambda} \B_{\bar \lambda}(\rho)$.
But $\lambda \mapsto (\bar\lambda)^{-1}\otimes \Nabla{\bar\lambda} \B_{\bar \lambda}(\rho)$ is a locally Lipschitz map from $\WAH^{k,\alpha;m} $ to $ \mathscr C^{k-2,\alpha; m-2}(M)$.
As tensor fields of weight $1$ in $ \mathscr C^{k-2,\alpha; m-2}(M)$ are in $C^{k,\alpha}_1(M)$, the proof is complete.
\end{proof}

We now turn to the analysis of scalar curvature, viewed as a function of the metric.
For  $k\geq 2$,  $\alpha\in[0,1)$, and $1\leq m \leq k$, we denote by $\HAR^{k,\alpha;m}$ the collection of those metrics $\lambda\in\WAH^{k,\alpha;m}$ such that $\R[\lambda]+6\in C^{k-2,\alpha}_m(M)$.
 (In particular, if $\lambda\in \HAR^{k,\alpha;m}$ then $\R[\lambda]+6 = \mathcal O(\rho^m)$ as $\rho\to 0$.)
 It is a direct consequence of Theorem 1.2(c) in \cite{WAH-Preliminary} that $\HAR^{k,\alpha;1} = \WAH^{k,\alpha;1}$.
However, $\HAR^{k,\alpha;2}$ is a proper subset of $\WAH^{k,\alpha;2}$.
In \cite{WAH-Preliminary}, we establish the following.
\begin{proposition}[Lemma 6.5 of \cite{WAH-Preliminary}]
\label{prop:DeformToHAR}
Suppose $\lambda\in \WAH^{k,\alpha;2}$ with $k\ge 2$ and $\alpha\in [0,1)$.
Then the conformal class of $\lambda$ contains a representative $\tilde \lambda\in \HAR^{k,\alpha;2}$.
Furthermore, $\lambda\mapsto \tilde \lambda$ is a locally Lipschitz map from $\WAH^{k,\alpha;2}$ to $\HAR^{k,\alpha;2}$.
\end{proposition}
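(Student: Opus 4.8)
The plan is to produce the representative by conformally rescaling so as to improve the decay of $\R+6$. If $\tilde\lambda$ is conformal to $\lambda$ with $\tilde\lambda\in\WAH^{k,\alpha;2}$ and $\R[\tilde\lambda]+6\in C^{k-2,\alpha}_2(M)$, then by definition $\tilde\lambda\in\HAR^{k,\alpha;2}$, so this is all that is needed. Since $\HAR^{k,\alpha;1}=\WAH^{k,\alpha;1}$ we already have $\R[\lambda]+6\in C^{k-2,\alpha}_1(M)$; using the conformal transformation formula $\R[\lambda]=\rho^2\R[\bar\lambda]+4\rho\,\Delta_{\bar\lambda}\rho-6|\D\rho|^2_{\bar\lambda}$ together with Lemmas \ref{lemma:cont-inverse} and \ref{lemma:cont-hess-etc} (so that $(\bar\lambda)^{-1}\in\mathscr{C}^{k,\alpha;2}(M)$, $\Hess_{\bar\lambda}\rho\in\mathscr{C}^{k-1,\alpha;1}(M)$, and $\R[\bar\lambda]\in C^{k-2,\alpha}(M)$) one checks the sharper statement that $\R[\lambda]+6\in\mathscr{C}^{k-2,\alpha;2}(M)$ and vanishes along $\partial M$. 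Consequently $q:=\rho^{-1}(\R[\lambda]+6)$ extends to an element of $\mathscr{C}^{k-2,\alpha;1}(M)$ with boundary trace $q|_{\partial M}\in C^{0,1}(\partial M)$, and the obstruction to having $\R[\lambda]+6\in C^{k-2,\alpha}_2(M)$ (which is exactly the hypothesis one needs to invoke Proposition \ref{SolveLichPhi}\eqref{part:simple-lich-better-r}) is precisely that $q|_{\partial M}$ is generically nonzero.

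The key step is a single explicit conformal change that cancels $q|_{\partial M}$. One should \emph{not} simply solve the linear equation $(\Delta_\lambda-3)\psi_0=\tfrac{1}{8}(\R[\lambda]+6)$: that produces a factor with the correct leading behaviour, but by Proposition \ref{PerturbedPoisson} only $\psi_0\in C^{k,\alpha}_1(M)$, which is not enough to keep $\theta_0^4\lambda$ in $\WAH^{k,\alpha;2}$ (its second background-connection derivatives are one power of $\rho$ short of what $\mathscr{C}^{k,\alpha;2}(M)$ requires). Instead, choose $\chi\in\mathscr{C}^{k,\alpha;1}(M)$ with $\chi|_{\partial M}=-\tfrac{1}{32}\,q|_{\partial M}$, via a fixed bounded linear extension/smoothing operator $C^{0,1}(\partial M)\to\mathscr{C}^{k,\alpha;1}(M)$ (which gains the interior regularity while preserving the $C^{0,1}$ boundary trace), and set $\theta_0:=1+\rho\chi$ (cutting $\chi$ off away from $\partial M$, if necessary, to ensure $\theta_0>0$). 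By Lemma \ref{lemma:BasicLipschitzFacts}(c), $\theta_0-1=\rho\chi\in\mathscr{C}^{k,\alpha;2}(M)$, so since $\mathscr{C}^{k,\alpha;2}(M)$ is an algebra and $\theta_0\equiv1$ on $\partial M$, we get $\lambda_0:=\theta_0^4\lambda\in\WAH^{k,\alpha;2}$. A computation with \eqref{ConformalChangeScalarCurvature}, using $\Delta_\lambda(\rho\chi)=-\rho\chi+(\text{a term in }C^{k-2,\alpha}_2(M))$ and the choice of $\chi|_{\partial M}$ to cancel the $O(\rho)$ contribution, gives $\theta_0^5(\R[\lambda_0]+6)=(\R[\lambda]+6)+32\rho\chi+(\text{a term in }C^{k-2,\alpha}_2(M))=\rho(q+32\chi)+(\cdots)\in C^{k-2,\alpha}_2(M)$, whence $\R[\lambda_0]+6\in C^{k-2,\alpha}_2(M)$ and $\tilde\lambda:=\lambda_0\in\HAR^{k,\alpha;2}$ is in the conformal class of $\lambda$. (If one prefers the normalized representative, apply Proposition \ref{SolveLichPhi}\eqref{part:simple-lich-better-r} to $\lambda_0$ with the matter terms set to zero, obtaining $\phi$ with $\phi-1\in C^{k,\alpha}_2(M)$, $\phi^4\lambda_0\in\WAH^{k,\alpha;2}$, and $\R[\phi^4\lambda_0]=-6$.)

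For the local Lipschitz statement, each ingredient above depends locally-Lipschitz-continuously on $\lambda$ in the relevant norms: $\lambda\mapsto\R[\lambda]+6$ is locally Lipschitz from $\WAH^{k,\alpha;2}$ to $\mathscr{C}^{k-2,\alpha;2}(M)$ by the same Neumann-series arguments as in Lemmas \ref{lemma:cont-inverse}--\ref{lemma:cont-hess-etc} and Remark \ref{rmk:BasicNoBar}; hence so are $\lambda\mapsto q$ (division by $\rho$ is continuous and linear on the subspace vanishing on $\partial M$), $\lambda\mapsto\chi$ (through the fixed extension operator), $\lambda\mapsto\theta_0$, $\lambda\mapsto\theta_0^4\lambda$ (by Lemma \ref{lemma:BasicLipschitzFacts} and the algebra structure), and finally $\lambda\mapsto\R[\theta_0^4\lambda]+6$ into $C^{k-2,\alpha}_2(M)$. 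Composing, $\lambda\mapsto\tilde\lambda$ is locally Lipschitz $\WAH^{k,\alpha;2}\to\HAR^{k,\alpha;2}$. (If the normalized representative is used instead, one adds that the solution map of the Lichnerowicz problem \eqref{SimpleLichPhi} depends locally-Lipschitz-continuously on the metric: the linearization $\Delta_{\lambda_0}-(\tfrac{15}{4}+\tfrac{1}{8}\R[\lambda_0])$ is uniformly invertible $C^{k,\alpha}_2(M)\to C^{k-2,\alpha}_2(M)$ since $2\in(-1,3)$ in Proposition \ref{PerturbedPoisson}, and its coefficients depend Lipschitz-continuously on the metric, so a contraction/implicit-function argument applies.)

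The main obstacle I expect is the weighted-H\"older / $\mathscr{C}^{k,\alpha;m}$ bookkeeping concentrated in the first two paragraphs: verifying that $\R[\lambda]+6$ genuinely lies in $\mathscr{C}^{k-2,\alpha;2}(M)$ (so that the leading coefficient $q$ is well defined and regular enough to be extended), and that the corrector $\theta_0$ can be chosen in $\mathscr{C}^{k,\alpha;2}(M)$ — thereby keeping $\lambda_0\in\WAH^{k,\alpha;2}$ — while simultaneously cancelling the $O(\rho)$ part of the scalar curvature and depending locally-Lipschitz-continuously on $\lambda$. This is delicate precisely because $\bar\lambda$ is only assumed $C^{1,1}$ up to $\partial M$, leaving essentially one order of decay to exploit and no room to spend it twice; tracking which tensorial contractions (the inverse metric, the connection-difference tensors, the Hessians of $\rho$) keep and which lose a power of $\rho$ is the heart of the argument.
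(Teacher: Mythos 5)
The paper does not actually prove Proposition \ref{prop:DeformToHAR}: it is imported wholesale as Lemma 6.5 of \cite{WAH-Preliminary}, so there is no in-paper argument to compare yours against line by line. The remark immediately following the proposition does reveal the shape of the cited proof --- a conformal factor $\theta\in\mathscr C^{k,\alpha;2}(M)$ with $\theta-1\in C^{k,\alpha}_1(M)$ --- and your $\theta_0=1+\rho\chi$ with $\chi\in\mathscr C^{k,\alpha;1}(M)$ is exactly of that form, so your route is at least consistent with the external one. The core of your argument checks out: writing $\theta=1+\psi$, one has $\theta^5(\R[\theta^4\lambda]+6)=\theta\R[\lambda]+6\theta^5-8\Delta_\lambda\theta=(\R[\lambda]+6)+(-6+30+8)\psi+(\text{terms in }C^{k-2,\alpha}_2(M))$, confirming your coefficient $32$ and the choice $\chi|_{\partial M}=-\tfrac1{32}q|_{\partial M}$; and your observation that the naive linear solve $(\Delta_\lambda-3)\psi_0=\tfrac18(\R[\lambda]+6)$ only yields $\psi_0\in C^{k,\alpha}_1(M)$, which is one order short of keeping $\theta_0^4\lambda$ in $\WAH^{k,\alpha;2}$, is a genuine and correct point that justifies the explicit corrector.

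Two soft spots should be repaired. First, the spaces you invoke for the bookkeeping are not always defined: $\mathscr C^{k-2,\alpha;m}(M)$ requires $m\le k-2$, so your claim $\R[\lambda]+6\in\mathscr C^{k-2,\alpha;2}(M)$ is vacuous for $k\in\{2,3\}$ and $q\in\mathscr C^{k-2,\alpha;1}(M)$ is vacuous for $k=2$, yet the proposition is asserted for all $k\ge2$. What your argument actually needs is weaker and survives: from $\R[\lambda]+6=\rho^2\R[\bar\lambda]+4\rho\Delta_{\bar\lambda}\rho-6(|\D\rho|^2_{\bar\lambda}-1)$ one reads off term by term that $q|_{\partial M}$ exists and lies in $C^{0,1}(\partial M)$ (only the last two terms contribute, and both are traces of $\mathscr C^{\,\cdot\,;1}$ or $\mathscr C^{\,\cdot\,;2}$ quantities, hence Lipschitz on $\bar M$), and that $(\R[\lambda]+6)+32\rho\chi\in C^{k-2,\alpha}_2(M)$ once the boundary trace cancels; but this must be verified directly on those three terms rather than deduced from membership of $\R[\lambda]+6$ in a triple-norm space. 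Second, the bounded linear extension operator $C^{0,1}(\partial M)\to\mathscr C^{k,\alpha;1}(M)$ is asserted rather than constructed; it is standard (mollify at scale $\rho$ in boundary coordinates), but it is doing real work --- it is precisely what lets $\chi$ have full interior regularity while carrying only a Lipschitz boundary trace --- so it deserves a citation or a short proof. With those two repairs the argument, including the local Lipschitz dependence via the fixed linear extension operator and Lemmas \ref{lemma:BasicLipschitzFacts}--\ref{lemma:cont-hess-etc}, is complete.
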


\begin{remark}
In fact, the proof of Proposition \ref{prop:DeformToHAR} in \cite{WAH-Preliminary} shows that for each metric $\lambda\in \WAH^{k,\alpha;2}$ there exists a positive function $\theta \in \mathscr C^{k,\alpha;2}(M)$ satisfying $\theta-1\in C^{k,\alpha}_1(M)$ and such that  $\tilde\lambda = \theta^{4}\lambda \in \HAR^{k,\alpha;2}$.
\end{remark}

Estimates on scalar curvature are used in the analysis of the Lichnerowicz equation.
Due to the conformal covariance of the Lichnerowicz equation (see Lemma \ref{lemma:LichInvariance}), we only need to establish continuity of $\lambda\mapsto \R[\lambda]+6$ for metrics in $\HAR^{k,\alpha;m}$. 

\begin{lemma}
\label{lemma:cont-R}
Let $k\geq 2$ and $\alpha\in [0,1)$.
The map $\lambda \mapsto \R[\lambda]+6$ is locally Lipschitz as a map from $\HAR^{k,\alpha;2}$ to $ C^{k-2,\alpha}_{2}(M)$.
\end{lemma}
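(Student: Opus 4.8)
The plan is to reduce the claim, via the conformal transformation rule for scalar curvature, to the local Lipschitz properties of the geometric building blocks already recorded in Lemmas \ref{lemma:BasicLipschitzFacts}, \ref{lemma:cont-inverse}, \ref{lemma:BarConnectionLip}, and \ref{lemma:cont-hess-etc}, and then to isolate the one place where the strengthened hypothesis $\lambda\in\HAR^{k,\alpha;2}$ (rather than merely $\lambda\in\WAH^{k,\alpha;2}$) is essential.

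For $\lambda\in\WAH^{k,\alpha;2}$, set $\bar\lambda=\rho^2\lambda$ and use \eqref{ConformalChangeScalarCurvature} (with $\lambda=\rho^{-2}\bar\lambda$) to write
\[
\R[\lambda]+6=\rho^{2}\,\R[\bar\lambda]+4\rho\,\Delta_{\bar\lambda}\rho+6\bigl(1-|\D\rho|^{2}_{\bar\lambda}\bigr)=\rho^{2}\,\R[\bar\lambda]+\rho\,G(\lambda),
\]
where $G(\lambda):=4\,\Delta_{\bar\lambda}\rho+6\,\rho^{-1}(1-|\D\rho|^{2}_{\bar\lambda})$. The first summand is harmless: $\R[\bar\lambda]$ is a universal contraction of $(\bar\lambda)^{-1}$, the difference tensor $D[\bar\lambda]$, its background derivative $\bar\nabla D[\bar\lambda]$, and the fixed smooth curvature of $\bar h$, so Lemmas \ref{lemma:cont-inverse}, \ref{lemma:BarConnectionLip} and \ref{lemma:BasicLipschitzFacts} make $\lambda\mapsto\R[\bar\lambda]$ locally Lipschitz from $\WAH^{k,\alpha;2}$ into $C^{k-2,\alpha}_{0}(M)$, and multiplying by $\rho^{2}$ (multiplication by $\rho$, Lemma \ref{lemma:BasicLipschitzFacts}) produces a locally Lipschitz map into $C^{k-2,\alpha}_{2}(M)$.

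For $G(\lambda)$, writing $\Delta_{\bar\lambda}\rho=\langle(\bar\lambda)^{-1},\Hess_{\bar\lambda}\rho\rangle$ and $|\D\rho|^{2}_{\bar\lambda}=\langle(\bar\lambda)^{-1},\D\rho\otimes\D\rho\rangle$, Lemmas \ref{lemma:cont-inverse}, \ref{lemma:cont-hess-etc}\eqref{part:hessian} and \ref{lemma:BasicLipschitzFacts} show that $\lambda\mapsto\Delta_{\bar\lambda}\rho$ is locally Lipschitz from $\WAH^{k,\alpha;2}$ into $\mathscr C^{k-1,\alpha;1}(M)$ and that $\lambda\mapsto 1-|\D\rho|^{2}_{\bar\lambda}$ is locally Lipschitz from $\WAH^{k,\alpha;2}$ into $\mathscr C^{k,\alpha;2}(M)$, the latter with image contained in the subspace of functions vanishing along $\partial M$ (since $|\D\rho|_{\bar\lambda}=1$ there). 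As $\mathscr C^{k-1,\alpha;1}(M)$ embeds continuously into $C^{0,1}(\bar M)$, $\mathscr C^{k,\alpha;2}(M)$ into $C^{1,1}(\bar M)$, and a $C^{1,1}(\bar M)$ function vanishing along $\partial M$ becomes, after division by $\rho$, a $C^{0,1}(\bar M)$ function with comparably bounded norm, the map $\lambda\mapsto G(\lambda)$ is locally Lipschitz from $\WAH^{k,\alpha;2}$ into $C^{0,1}(\bar M)$, and—retaining the interior estimates used along the way—into $C^{k-1,\alpha}_{0}(M)$. By itself this controls $\rho\,G(\lambda)$ only in $C^{k-1,\alpha}_{1}(M)$, one power of $\rho$ short of the target $C^{k-2,\alpha}_{2}(M)$.

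Supplying that last power of $\rho$ is where the hypothesis is used, and is the main obstacle. For $\lambda\in\HAR^{k,\alpha;2}$ one has $\R[\lambda]+6\in C^{k-2,\alpha}_{2}(M)$ by definition, so the preceding paragraph forces $G(\lambda)\in C^{k-2,\alpha}_{1}(M)$; in particular $G(\lambda)$ vanishes along $\partial M$. Thus for $\lambda_{1},\lambda_{2}\in\HAR^{k,\alpha;2}$ in a small $\WAH^{k,\alpha;2}$-neighborhood of a fixed $\lambda_{0}$, the difference $G(\lambda_{1})-G(\lambda_{2})$ vanishes along $\partial M$ and is bounded by $C\,\gNorm{\bar\lambda_{1}-\bar\lambda_{2}}_{k,\alpha;2}$ both in $C^{0,1}(\bar M)$ and in $C^{k-1,\alpha}_{0}(M)$. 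The crux is then the fact—an instance of the division and embedding lemmas for the $\mathscr C^{k,\alpha;m}$-spaces of \cite{WAH-Preliminary}—that a function bounded in $C^{k-1,\alpha}_{0}(M)$, Lipschitz up to $\bar M$, and vanishing along $\partial M$ is $\rho$ times a function of class $C^{k-2,\alpha}_{0}(M)$ with norm controlled by these data: the boundary-Lipschitz bound supplies the pointwise weight gain $|u|\le C\rho\,\|u\|_{C^{0,1}(\bar M)}$, while the interior $C^{k-1,\alpha}_{0}$ bound supplies the Hölder regularity. It is worth emphasizing that mere boundedness in $C^{k-1,\alpha}_{0}(M)$ together with vanishing along $\partial M$ does \emph{not} yield a controlled weight gain; what saves the argument is that $G(\lambda)$ is genuinely controlled \emph{up to the boundary}, and this in turn rests on the potentially dangerous piece $\rho^{-1}(1-|\D\rho|^{2}_{\bar\lambda})$ being $\rho^{-1}$ times a $C^{1,1}(\bar M)$ function that vanishes along $\partial M$. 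Granting this, $\|G(\lambda_{1})-G(\lambda_{2})\|_{C^{k-2,\alpha}_{1}(M)}\le C\,\gNorm{\bar\lambda_{1}-\bar\lambda_{2}}_{k,\alpha;2}$; multiplying by $\rho$ and adding the harmless term of the second paragraph finishes the proof.
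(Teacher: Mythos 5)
Your decomposition $\R[\lambda]+6=\rho^{2}\R[\bar\lambda]+\rho\,G(\lambda)$ is a sensible self-contained route (the paper itself simply cites Remark 3.2 of \cite{WAH-Preliminary}), and the first two paragraphs are sound: the term $\rho^{2}\R[\bar\lambda]$ is handled correctly, and you rightly identify that the whole difficulty is the cancellation between $4\rho\Delta_{\bar\lambda}\rho$ and $6(1-|\D\rho|^{2}_{\bar\lambda})$, which is exactly where the hypothesis $\lambda\in\HAR^{k,\alpha;2}$ enters.

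The gap is the auxiliary ``division'' claim on which the last paragraph rests: that a function $u$ bounded in $C^{k-1,\alpha}_{0}(M)$, Lipschitz on $\bar M$, and vanishing along $\partial M$ lies in $C^{k-2,\alpha}_{1}(M)$ with norm controlled by those two quantities. This is true for $k=2$ (the $C^{0,1}(\bar M)$ bound gives $|u|\le C\rho$ and $|\nabla u|_{h}\le C\rho$, which controls $\rho^{-1}u$ in the intrinsic $C^{0,1}$, hence $C^{0,\alpha}$, norm), but it fails for $k\ge 3$. In half-space coordinates with $\rho=x$, the function $u=x^{2}\sin(y/x)$ (suitably cut off) is Lipschitz on $\bar M$, vanishes on $\{x=0\}$, and lies in $C^{2,\alpha}_{0}(M)$, yet $\rho^{-1}u=x\sin(y/x)$ is not in $C^{1,\alpha}_{0}(M)$: its intrinsic gradient contains the term $y\cos(y/x)$, whose intrinsic $C^{0,\alpha}$ seminorm blows up like $x^{-\alpha}$. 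The point is that an $L^{\infty}(\bar M)$ bound on $\bar\nabla u$ is strictly weaker than what the genuine division lemma (Lemma 2.2(d) of \cite{WAH-Preliminary}, used in Corollary \ref{BShearFree}) requires, namely control of $\bar\nabla u$ in a \emph{weighted H\"older} space, i.e.\ membership of $u$ in $\mathscr C^{k-2,\alpha;1}(M)$. Establishing that $G(\lambda_{1})-G(\lambda_{2})$ has this stronger structure, with Lipschitz dependence on the metrics, is the real content of the lemma and is not supplied by your argument: the background gradient of the piece $\rho^{-1}\bigl(1-|\D\rho|^{2}_{\bar\lambda}\bigr)$ is the difference $\rho^{-1}\bar\nabla v-\rho^{-2}v\,\D\rho$ of two terms that are individually of size $\rho^{-1}$ in the $\bar h$-norm, and one must track their cancellation in weighted H\"older norms (using the full $\mathscr C^{k,\alpha;2}$ control of $1-|\D\rho|^{2}_{\bar\lambda}$), not merely at the level of $L^{\infty}(\bar M)$. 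Until that is done, the estimate $\|G(\lambda_{1})-G(\lambda_{2})\|_{C^{k-2,\alpha}_{1}(M)}\le C\gNorm{\bar\lambda_{1}-\bar\lambda_{2}}_{k,\alpha;2}$ is unproved for $k\ge 3$.
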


\begin{proof}
See Remark 3.2 of \cite{WAH-Preliminary}.
\end{proof}

We now discuss continuity for geometric differential operators.
Recall from \cite{Lee-FredholmOperators} that a linear differential operator $\mathcal P= \mathcal P[\lambda]$ is \Defn{geometric} of order $l$ if $\mathcal P[\lambda]u$ can be expressed as sums of contractions of terms of the form 
\begin{equation*}
\Nabla{\lambda}{}^iu 
\otimes \Nabla{\lambda}{}^{k_1}\Riem[\lambda] \otimes\dots \otimes \Nabla{\lambda}{}^{k_j}\Riem[\lambda]
\otimes(\otimes ^p \lambda^{-1}) \otimes (\otimes^q \lambda) \otimes (\otimes^s dV_{\lambda})
\end{equation*}
with $0\leq i\leq l$ and $0\leq k_t \leq l-i -2$.
(The volume form is permitted only if $M$ is oriented.)

Geometric operators are locally Lipschitz in the following sense.
\begin{proposition}
\label{prop:GeoOpCont}
Let $0\leq m \leq k$, $\alpha \in [0,1)$, and $\delta\in \mathbb R$.
Suppose $\mathcal P$ is a geometric operator of order $l$ with $l\leq k$.
Then for each $\lambda \in \WAH^{k,\alpha;m}$ there exist positive constants $C$ and $\epsilon$, depending on $\lambda$, such that for all $u\in C^{k,\alpha}_\delta(M)$ we have
\begin{equation*}
\| \mathcal P[\lambda_1] u - \mathcal P[\lambda_2]u\|_{C^{k-l,\alpha}_\delta(M)}
\leq C 
\gNorm{\bar \lambda_1 - \bar\lambda_2}_{k,\alpha;m} 
\|u\|_{C^{k,\alpha}_\delta(M)}
\end{equation*}
so long as $\gNorm{\bar \lambda_i - \bar\lambda}_{k,\alpha;m}\leq \epsilon$ for $i=1,2$.
\end{proposition}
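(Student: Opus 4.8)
The plan is to exploit the explicit schematic structure of a geometric operator: $\mathcal{P}[\lambda]u$ is a finite sum of contractions of tensor products whose factors are of three types — covariant derivatives $\Nabla{\lambda}^i u$ of the argument (with $0\le i\le l$), covariant derivatives of the curvature $\Nabla{\lambda}^{k_t}\Riem[\lambda]$ (with $0\le k_t\le l-i-2$), and algebraic factors $\lambda^{-1}$, $\lambda$, and $dV_\lambda$. The difference $\mathcal{P}[\lambda_1]u - \mathcal{P}[\lambda_2]u$ is then, by the usual telescoping over the factors in each term, a sum of contractions in which exactly one factor is replaced by the difference of its $\lambda_1$- and $\lambda_2$-versions while the remaining factors are evaluated at either $\lambda_1$ or $\lambda_2$. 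It therefore suffices to show that each \emph{individual} factor, as a map $\lambda\mapsto(\text{factor})$, is locally Lipschitz into the appropriate weighted space, and that the remaining factors are locally bounded there; the product estimate of Lemma~\ref{lemma:BasicLipschitzFacts}(a) (together with local boundedness of contraction, item (b)) then assembles the desired bound, with the norm $\gNorm{\bar\lambda_1 - \bar\lambda_2}_{k,\alpha;m}$ coming out as the single Lipschitz factor. The one subtlety in bookkeeping is that the estimate is asked for in the \emph{weighted} norm $C^{k-l,\alpha}_\delta(M)$ rather than in a triple-norm space; this is handled by observing that all the fixed (non-$u$) factors have weight zero after conversion to barred quantities, so they may be estimated in $\mathscr C^{k,\alpha;m}(M)$ (equivalently, their barred versions are bounded in $C^{k,\alpha}_0$-type norms), and the only weighted factor is $u$ itself and its covariant derivatives, which carry the weight $\delta$.

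The two genuinely non-routine points are the Lipschitz dependence of $\Nabla{\lambda}^i u$ on $\lambda$ and of $\Nabla{\lambda}^{k_t}\Riem[\lambda]$ on $\lambda$. For the former, I would write $\Nabla{\lambda} = \bar\nabla + D[\bar\lambda]$ on barred tensors (or $\Nabla{\lambda} = \nabla + (\Nabla{\lambda}-\nabla)$ directly), iterate to express $\Nabla{\lambda}^i u$ as $\bar\nabla^i u$ plus a sum of contractions of $u$, its lower-order $\bar\nabla$-derivatives, and up to $i-1$ copies of $D[\bar\lambda]$ and its $\bar\nabla$-derivatives; since $i\le l\le k$ and the difference tensor $\lambda\mapsto D[\bar\lambda]$ is locally Lipschitz from $\WAH^{k,\alpha;m}$ to $\mathscr C^{k-1,\alpha;m-1}(M)$ by Lemma~\ref{lemma:BarConnectionLip} (with enough derivatives to spare because we only differentiate $D[\bar\lambda]$ up to order $i-2\le k-2$), each such contraction is locally Lipschitz in $\lambda$ with the $u$-dependence linear, using Remark~\ref{rmk:BasicNoBar} for the unbarred connection where convenient. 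For the curvature factors, I would use that $\Riem[\lambda]$ (suitably barred) is a universal polynomial in $\lambda^{-1}$ and the first two $\bar\nabla$-derivatives of $\bar\lambda$, hence $\lambda\mapsto\Riem[\lambda]$ is locally Lipschitz from $\WAH^{k,\alpha;m}$ into $\mathscr C^{k-2,\alpha;m-2}(M)$ (this is essentially the content behind Lemma~\ref{lemma:cont-R} and Remark~3.2 of \cite{WAH-Preliminary}), and then $\Nabla{\lambda}^{k_t}\Riem[\lambda]$ is handled exactly as $\Nabla{\lambda}^i u$ above, noting the constraint $k_t\le l-i-2$ ensures that the total number of derivatives hitting metric-built objects never exceeds $k$ and that all the $\mathscr C$-regularity indices stay $\ge 0$.

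The main obstacle I anticipate is purely organizational rather than analytic: keeping track of the regularity indices in the various $\mathscr C^{k',\alpha;m'}$ spaces so that every product and contraction invoked lands in a space where Lemma~\ref{lemma:BasicLipschitzFacts} applies, and verifying that the hypothesis $l\le k$ (and the constraints $0\le i\le l$, $0\le k_t\le l-i-2$ in the definition of a geometric operator) is exactly what is needed for this bookkeeping to close without ever requiring more than $k$ barred derivatives or a negative number of "spare" derivatives $m'$. Once the schematic expansion is set up carefully, each piece is a direct application of the lemmas of Section~\ref{PDE-Continuity}, and assembling them via the product rule (item (a)) and the triangle inequality over the finitely many terms and finitely many telescoping positions yields the stated estimate, with $C$ and $\epsilon$ absorbing the (finite) number of terms and the local-boundedness constants of the frozen factors near $\lambda$.
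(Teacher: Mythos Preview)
Your overall strategy---telescope the difference over the factors in each schematic term, establish local Lipschitz continuity of each building block, and assemble via the product and contraction lemma---is exactly the paper's. The execution, however, is needlessly heavy. The paper's first sentence is ``It suffices to take $m=0$'': since $\gNorm{\cdot}_{k,\alpha;0}\le\gNorm{\cdot}_{k,\alpha;m}$, the $m=0$ case is the strongest, and once $m=0$ there is no reason to work with barred quantities or the spaces $\mathscr C^{k',\alpha;m'}$ at all. The paper then lists four maps, all from $\WAH^{k,\alpha;0}$ into \emph{unweighted} $C^{k,\alpha}(M)$ (or $C^{k-1,\alpha}$, $C^{k-2,\alpha}$): $\lambda\mapsto\lambda^{-1}$, $\lambda\mapsto dV_\lambda$, $\lambda\mapsto\Nabla{\lambda}-\nabla$, and $\lambda\mapsto\Riem[\lambda]$, invoking Remark~\ref{rmk:BasicNoBar} for the first three and the schematic form $\lambda^{-1}\otimes\lambda^{-1}\otimes\nabla^2\lambda$, $\lambda^{-1}\otimes\lambda^{-1}\otimes\nabla\lambda\otimes\nabla\lambda$ for the fourth. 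Everything then sits in the algebra $C^{k,\alpha}(M)$ (modulo the obvious loss of derivatives), the weight $\delta$ rides only on $u$, and Lemma~\ref{lemma:BasicLipschitzFacts} closes the argument with no index bookkeeping.

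Your route through $D[\bar\lambda]$ and the $\mathscr C^{k-1,\alpha;m-1}$ spaces forces you to track the second index, runs into apparent trouble when $m<2$ (your curvature target $\mathscr C^{k-2,\alpha;m-2}$ is undefined for $m=0,1$), and then has to fall back on the unbarred connection ``where convenient'' anyway. The ``main obstacle'' you anticipate---keeping the $\mathscr C^{k',\alpha;m'}$ indices non-negative---is precisely the one the paper's reduction to $m=0$ makes disappear.
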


\begin{proof}
It suffices to take $m=0$ and to show that the following are locally Lipschitz:
\begin{enumerate}
\item the map $\WAH^{k,\alpha;0}\to C^{k,\alpha}(M)$ given by $\lambda\mapsto \lambda^{-1}$,
\item the map $\WAH^{k,\alpha;0}\to C^{k,\alpha}(M)$ given by $\lambda\mapsto dV_{\lambda}$,
\item the map $\WAH^{k,\alpha;0}\to C^{k-1,\alpha}(M)$ given by $\lambda\mapsto \Nabla{\lambda} - \nabla$,
and
\item the map $\WAH^{k,\alpha;0}\to C^{k-2,\alpha}(M)$ given by $\lambda\mapsto \Riem[\lambda]$.
\end{enumerate}
The continuity of the first three maps follows from  direct computation and from Remark \ref{rmk:BasicNoBar}.
The continuity of $\Riem[\lambda]$ follows from the analysis of terms of the form
\begin{equation*}
\lambda^{-1}\otimes \lambda^{-1} \otimes \nabla^2\lambda
\quad\text{ and }\quad
\lambda^{-1} \otimes \lambda^{-1} \otimes \nabla \lambda \otimes \nabla\lambda.
\end{equation*}
The result now follows from Lemma \ref{lemma:BasicLipschitzFacts}.
\end{proof}

Proposition \ref{prop:GeoOpCont} allows us to establish local Lipschitz continuity for the various linear geometric operators $\mathcal P$ arising in the conformal method.
In the case that $\mathcal P$ is a second-order elliptic operator, such as the scalar Laplace operator $\Delta_\lambda$ or the vector Laplace operator $L_\lambda$, it is useful to obtain additional estimates in those spaces on which $\mathcal P$ is invertible.
An important observation is that in Proposition \ref{Fred}, the range of weights $\delta$ for which $\mathcal P$ is Fredholm of index zero is independent of the metric $\lambda$, but instead depends only on the algebraic structure of the operator; see \cite{Lee-FredholmOperators} (or \cite{WAH-Preliminary}) for additional details.
In fact, for both the scalar and vector Laplace operators, Propositions \ref{SolveVectorLaplacian} and \ref{PerturbedPoisson} imply that the operators are invertible for all metrics of sufficient regularity.
We now establish a certain type of  continuity for the inverses of these operators.  

\begin{proposition}
\label{prop:LinearContinuity}
Let $0\leq m \leq k$, $\alpha \in [0,1)$, and $\delta\in \mathbb R$. 
Suppose that $\mathcal P = \mathcal P[\lambda]$ is a second-order linear geometric elliptic operator such that for each $\lambda \in \WAH^{k,\alpha;m}$ and $f\in C^{k-2,\alpha}_\delta(M)$ there exists a unique solution $u\in C^{k,\alpha}_\delta(M)$ to 
$
\mathcal P[\lambda] u = f.
$
Then the map $(\lambda, f)\mapsto u$ is locally Lipschitz continuous as a map 
\begin{equation*}
\WAH^{k,\alpha;m} \times C^{k-2,\alpha}_\delta(M)
\to C^{k,\alpha}_\delta(M).
\end{equation*}
\end{proposition}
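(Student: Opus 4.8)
The plan is to reduce the nonlinear statement to a linear estimate by exploiting the resolvent identity, and then to invoke Proposition~\ref{prop:GeoOpCont} for the metric-dependence together with the uniform invertibility estimate guaranteed by the hypothesis. Fix $\lambda \in \WAH^{k,\alpha;m}$ and let $u_i \in C^{k,\alpha}_\delta(M)$ be the unique solution of $\mathcal P[\lambda_i]u_i = f_i$ for $i = 1,2$, where $(\lambda_i,f_i)$ are close to $(\lambda,f)$. First I would write
\[
\mathcal P[\lambda_1](u_1 - u_2) = f_1 - f_2 - \bigl(\mathcal P[\lambda_1] - \mathcal P[\lambda_2]\bigr) u_2.
\]
Since $\mathcal P[\lambda_1]\colon C^{k,\alpha}_\delta(M) \to C^{k-2,\alpha}_\delta(M)$ is invertible by hypothesis, and since its inverse is a bounded linear map, the closed graph theorem gives an estimate $\|v\|_{C^{k,\alpha}_\delta(M)} \le C_1 \|\mathcal P[\lambda_1] v\|_{C^{k-2,\alpha}_\delta(M)}$; the first task is to show that the constant $C_1$ may be chosen uniformly for $\lambda_1$ in a small neighborhood of $\lambda$.

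For this uniformity I would argue as follows. Write $\mathcal P[\lambda_1] = \mathcal P[\lambda] + \bigl(\mathcal P[\lambda_1] - \mathcal P[\lambda]\bigr)$; by Proposition~\ref{prop:GeoOpCont}, the operator norm of the difference $\mathcal P[\lambda_1] - \mathcal P[\lambda]\colon C^{k,\alpha}_\delta(M) \to C^{k-2,\alpha}_\delta(M)$ is bounded by $C\gNorm{\bar\lambda_1 - \bar\lambda}_{k,\alpha;m}$, which is small when $\lambda_1$ is close to $\lambda$. A standard Neumann series argument then shows that $\mathcal P[\lambda_1]$ remains invertible with $\|\mathcal P[\lambda_1]^{-1}\| \le 2\|\mathcal P[\lambda]^{-1}\| =: C_1$ provided $C\gNorm{\bar\lambda_1 - \bar\lambda}_{k,\alpha;m} \le \tfrac{1}{2}\|\mathcal P[\lambda]^{-1}\|^{-1}$. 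This also confirms that the solution $u_1$ is genuinely unique in the stated sense and that $u_1$ depends on $(\lambda_1,f_1)$ in a well-defined way near $(\lambda,f)$.

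Combining these ingredients, from the identity above I obtain
\[
\|u_1 - u_2\|_{C^{k,\alpha}_\delta(M)}
\le C_1\Bigl( \|f_1 - f_2\|_{C^{k-2,\alpha}_\delta(M)}
+ \|\bigl(\mathcal P[\lambda_1]-\mathcal P[\lambda_2]\bigr)u_2\|_{C^{k-2,\alpha}_\delta(M)}\Bigr).
\]
The first term is already linear in $\|f_1 - f_2\|$. For the second term, Proposition~\ref{prop:GeoOpCont} bounds it by $C\gNorm{\bar\lambda_1 - \bar\lambda_2}_{k,\alpha;m}\|u_2\|_{C^{k,\alpha}_\delta(M)}$, so it remains to bound $\|u_2\|_{C^{k,\alpha}_\delta(M)}$ uniformly. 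But $\|u_2\|_{C^{k,\alpha}_\delta(M)} \le C_1\|f_2\|_{C^{k-2,\alpha}_\delta(M)} \le C_1(\|f\|_{C^{k-2,\alpha}_\delta(M)} + \epsilon)$, which is uniformly bounded on the neighborhood. Assembling, we get
\[
\|u_1 - u_2\|_{C^{k,\alpha}_\delta(M)} \le C_x\bigl(\|f_1 - f_2\|_{C^{k-2,\alpha}_\delta(M)} + \gNorm{\bar\lambda_1 - \bar\lambda_2}_{k,\alpha;m}\bigr),
\]
which is exactly local Lipschitz continuity of $(\lambda,f)\mapsto u$, since the $\WAH^{k,\alpha;m}$-distance between $\lambda_1$ and $\lambda_2$ is $\gNorm{\bar\lambda_1-\bar\lambda_2}_{k,\alpha;m}$.

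I expect the main obstacle to be the uniformity of the invertibility constant $C_1$ over a neighborhood of $\lambda$ — everything else is bookkeeping once that is in place. The key point making this work is that Proposition~\ref{prop:GeoOpCont} controls the \emph{operator norm} of $\mathcal P[\lambda_1] - \mathcal P[\lambda_2]$ linearly in the metric distance, which is precisely what a Neumann-series perturbation of the inverse requires; I would want to double-check that the relevant weights $\delta$ for which $\mathcal P[\lambda_1]$ is invertible do not shrink as $\lambda_1$ varies, but this is guaranteed by the observation (cited before the statement) that the Fredholm range of weights depends only on the algebraic structure of the operator, not on the metric.
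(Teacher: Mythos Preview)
Your proposal is correct and follows essentially the same route as the paper: derive a uniform invertibility estimate for $\mathcal P[\lambda_1]$ near $\lambda$ via Proposition~\ref{prop:GeoOpCont}, apply it to the identity $\mathcal P[\lambda_1](u_1-u_2)=f_1-f_2-(\mathcal P[\lambda_1]-\mathcal P[\lambda_2])u_2$, and bound $\|u_2\|$ with the same uniform estimate. The only cosmetic difference is that the paper obtains the uniform constant by a direct triangle-inequality absorption rather than phrasing it as a Neumann series, but the content is identical.
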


\begin{proof}
Fix $\lambda\in \WAH^{k,\alpha;m}$ and $f\in C^{k-2,\alpha}_\delta(M)$, and suppose for $i=1,2$ that $\lambda_i\in \WAH^{k,\alpha;m}$ and $f_i\in C^{k-2,\alpha}_\delta(M)$ 
satisfy
\begin{equation*}
\gNorm{\bar\lambda_i - \bar\lambda}_{k,\alpha;m}
+
\|f_i - f\|_{C^{k-2,\alpha}_\delta(M)}<\epsilon
\end{equation*}
for some $\epsilon>0$, chosen below depending on $\lambda$.
Suppose as well that $\mathcal P[\lambda_i]u_i = f_i$.

The invertibility of $\mathcal P[\lambda]$ implies that there exists $C>0$ such that for all $u\in C^{k,\alpha}_\delta(M)$ we have
\begin{equation*}
\| u \|_{C^{k,\alpha}_\delta(M)}
\leq C \|\mathcal P[\lambda]u \|_{C^{k-2,\alpha}_\delta(M)}.
\end{equation*}
Since
\begin{equation*}
\|\mathcal P[\lambda]u \|_{C^{k-2,\alpha}_\delta(M)} 
\leq
\|\mathcal P[\lambda_i]u \|_{C^{k-2,\alpha}_\delta(M)}
+
\|(\mathcal P[\lambda_i] -\mathcal P[\lambda])u \|_{C^{k-2,\alpha}_\delta(M)},
\end{equation*}
we may invoke Proposition \ref{prop:GeoOpCont} to see that there exists some $\epsilon_*>0$ such that for $\epsilon\in (0,\epsilon_*]$ we have
\begin{equation}
\label{UniformInvert}
\| u \|_{C^{k,\alpha}_\delta(M)}
\leq C_* \|\mathcal P[\lambda_i]u \|_{C^{k-2,\alpha}_\delta(M)}
\end{equation}
for all $u\in C^{k,\alpha}_\delta(M)$; here $C_*$ depends only on $\lambda$, $\epsilon_*$ and universal parameters.

We apply this latter estimate to the identity
\begin{equation*}
\mathcal P[\lambda_1](u_2-u_1)
= f_2-f_1
- (\mathcal P[\lambda_2]-\mathcal P[\lambda_1])u_2.
\end{equation*}
Proposition \ref{prop:GeoOpCont} implies that
\begin{equation*}
\| (\mathcal P[\lambda_2]-\mathcal P[\lambda_1])u_2\|_{C^{k-2,\alpha}_\delta(M)}
\leq C \gNorm{\lambda_2-\lambda_1}_{k,\alpha;m} \|u_2\|_{C^{k,\alpha}_\delta(M)},
\end{equation*}
while \eqref{UniformInvert} implies 
\begin{equation*}
\|u_2\|_{C^{k,\alpha}_\delta(M)}
\leq C_* \|f_2\|_{C^{k-2,\alpha}_\delta(M)}\leq C_* \left(\|f\|_{C^{k-2,\alpha}_\delta(M)} + \epsilon_*\right).
\end{equation*}
Thus 
\begin{equation*}
\| u_1 - u_2\|_{C^{k,\alpha}_\delta(M)}
\leq C \left(
\| f_1 - f_2\|_{C^{k-2,\alpha}_\delta(M)}
+ \gNorm{\lambda_1 - \lambda_2}_{k,\alpha;m}
\right),
\end{equation*}
with $C$ depending only on $\lambda$ and $f$.
\end{proof}

Proposition \ref{prop:LinearContinuity} provides continuity for the linear existence results in \S\ref{PDEResults}.
We now establish continuity of the solution map defined by Proposition \ref{SolveLichPhi}.
In view of Proposition \ref{prop:DeformToHAR}  and Lemma \ref{lemma:LichInvariance} it suffices to show the following.

\begin{proposition}
\label{prop:LichPhiCont}
Let $k\geq 2$, and $\alpha \in (0,1)$.
Then the map $(\lambda, A, B)\mapsto \phi-1$, where $\phi$ is the solution to \eqref{SimpleLichPhi} established by Proposition \ref{SolveLichPhi}\eqref{part:simple-lich-better-r}, is a continuous map from 
$
\HAR^{k,\alpha;2} \times 
C^{k-2,\alpha}_2(M) \times 
C^{k-2,\alpha}_2(M)
$ to $
C^{k,\alpha}_2(M),
$
where we restrict the domain to $A,B\geq 0$.
\end{proposition}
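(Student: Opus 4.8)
The plan is to prove continuity by a standard fixed-point/perturbation argument analogous to the one used for Proposition \ref{prop:LinearContinuity}, but now for the nonlinear Lichnerowicz equation. First I would reformulate \eqref{SimpleLichPhi} in terms of the unknown $\psi = \phi - 1 \in C^{k,\alpha}_2(M)$, writing the equation as $\Delta_\lambda \psi = F(\lambda, A, B, \psi)$ where $F$ collects $\tfrac18 R[\lambda](1+\psi) - A(1+\psi)^{-7} - B(1+\psi)^{-3} + \tfrac34(1+\psi)^5 - \Delta_\lambda 1$; since $R[\lambda]+6 \in C^{k-2,\alpha}_2(M)$ for $\lambda \in \HAR^{k,\alpha;2}$, and using the expansion of the polynomial/rational terms about $\psi = 0$ (which kills the constant term by the defining relation for $\phi$), one checks that $F$ maps into $C^{k-2,\alpha}_2(M)$. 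The substitution operators $\psi \mapsto (1+\psi)^{-7}$ etc.\ are locally Lipschitz on $C^{k,\alpha}_2(M)$ near $\psi = 0$ because this is a Banach algebra with $1 \notin C^{k,\alpha}_2(M)$ but $(1+\psi)$ bounded away from zero; this is where I would cite the algebra property and standard Moser-type estimates for composition with smooth functions.

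Next, fix $(\lambda, A, B)$ with solution $\phi$, hence $\psi$, and take nearby data $(\lambda_i, A_i, B_i)$ with solutions $\psi_i$, $i = 1,2$. Subtracting the two equations gives
\begin{equation*}
\Delta_{\lambda_1}(\psi_1 - \psi_2) = (\Delta_{\lambda_2} - \Delta_{\lambda_1})\psi_2 + F(\lambda_1, A_1, B_1, \psi_1) - F(\lambda_2, A_2, B_2, \psi_2).
\end{equation*}
The key analytic input is that the linearization of the left side, namely the operator $\psi \mapsto \Delta_\lambda \psi - \big(\tfrac18 R[\lambda] + 7A(1+\psi)^{-8}\cdot(\text{terms}) + \dots\big)\psi$ — more simply, the operator $\Delta_\lambda - V$ where $V \geq 0$ is the $\psi$-derivative of the nonlinear terms — is invertible on $C^{k,\alpha}_2(M)$. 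This invertibility, uniformly for $\lambda_i$ near $\lambda$, follows from Proposition \ref{PerturbedPoisson} (with $c$ near $6$, so the indicial radius $\sqrt{1+c}$ permits weight $\delta = 2$) together with a uniform-invertibility estimate of the type \eqref{UniformInvert} obtained via Proposition \ref{prop:GeoOpCont} and Lemma \ref{lemma:cont-R}. I would first establish a uniform a priori bound $\|\psi_i\|_{C^{k,\alpha}_2(M)} \leq C$ for $(\lambda_i, A_i, B_i)$ in a small neighborhood — this uses the uniform invertibility plus the contraction-mapping construction of Proposition \ref{SolveLichPhi}, whose constants depend only on bounds for the data — and then feed this bound back into the difference equation.

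With the uniform bound in hand, I estimate the right side of the difference equation: the term $(\Delta_{\lambda_2} - \Delta_{\lambda_1})\psi_2$ is controlled by $C\,\gNorm{\bar\lambda_1 - \bar\lambda_2}_{k,\alpha;2}\,\|\psi_2\|_{C^{k,\alpha}_2(M)}$ via Proposition \ref{prop:GeoOpCont}, and the difference $F(\lambda_1, A_1, B_1, \psi_1) - F(\lambda_2, A_2, B_2, \psi_2)$ is bounded by $C\big(\gNorm{\bar\lambda_1 - \bar\lambda_2}_{k,\alpha;2} + \|A_1 - A_2\|_{C^{k-2,\alpha}_2(M)} + \|B_1 - B_2\|_{C^{k-2,\alpha}_2(M)} + \|\psi_1 - \psi_2\|_{C^{k,\alpha}_2(M)}\big)$ using Lemma \ref{lemma:cont-R}, local Lipschitz continuity of the substitution maps, and the uniform bound. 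Applying the uniform-invertibility estimate and absorbing the $\|\psi_1 - \psi_2\|$ term on the right (possible by shrinking the neighborhood so the Lipschitz constant of the nonlinearity times the operator norm of the inverse is less than $1$) yields $\|\psi_1 - \psi_2\|_{C^{k,\alpha}_2(M)} \leq C\big(\gNorm{\bar\lambda_1 - \bar\lambda_2}_{k,\alpha;2} + \|A_1 - A_2\|_{C^{k-2,\alpha}_2(M)} + \|B_1 - B_2\|_{C^{k-2,\alpha}_2(M)}\big)$, which is the desired local Lipschitz (hence continuous) dependence. I expect the main obstacle to be the uniform a priori bound on $\|\psi_i\|_{C^{k,\alpha}_2(M)}$: one must track carefully that the constants in the monotone-iteration/barrier construction of Proposition \ref{SolveLichPhi} and in the elliptic estimates depend only on the $C^0$ size of $A, B$ and on $\gNorm{\bar\lambda}_{k,\alpha;2}$, so that they are uniform over a neighborhood — this is the step where I would need to look most carefully at the proof of Proposition \ref{SolveLichPhi} in \cite{WAH-Preliminary} rather than merely invoke its statement.
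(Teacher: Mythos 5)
Your overall strategy---linearize the Lichnerowicz equation around the reference solution, invert the linearization via Proposition \ref{PerturbedPoisson} at weight $\delta=2$, and control the remaining terms perturbatively---is the same one the paper uses. But there is a genuine gap at exactly the step you flag: the absorption of $\|\psi_1-\psi_2\|_{C^{k,\alpha}_2(M)}$ into the left-hand side. After you subtract the full linearization $\Delta_\lambda - V$ (with $V$ the $\psi$-derivative of the nonlinearity), what remains of $F(\lambda_1,A_1,B_1,\psi_1)-F(\lambda_2,A_2,B_2,\psi_2)$ is \emph{quadratic} in $\psi_1-\psi_2$; its Lipschitz constant on a ball of radius $r$ is $O(r)$, so absorbing it requires knowing \emph{in advance} that $\psi_2$ lies in a small ball around $\psi_1$. (Shrinking the neighborhood of the \emph{data} does not by itself shrink this constant.) That is precisely the uniform a priori closeness bound you defer to ``looking carefully at the proof of Proposition \ref{SolveLichPhi} in \cite{WAH-Preliminary},'' so as written the argument is circular at its key step.

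The paper closes this gap without reopening the existence proof. Writing $u=\phi_1^{-1}(\phi_2-\phi_1)$ and $\gamma=\phi_1^4\lambda_1$, it recasts the difference equation as $(\mathcal P+\mathcal K)u=f+Q(u)$, where $\mathcal P=\Delta_\gamma-(3+8\phi_1^{-12}A_1+4\phi_1^{-8}B_1)$ is invertible by Proposition \ref{PerturbedPoisson} (the relevant constant is $c=3$, not $6$, coming from $\tfrac{15}{4}-\tfrac34$; since $\sqrt{1+3}=2$, the weight $\delta=2$ is admissible), $\mathcal K$ and $f$ are $O(\delta)$ in the data, and $Q$ is genuinely quadratic thanks to the Taylor remainders of $t^5,t^{-3},t^{-7}$ at $t=1$. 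It then runs the Banach fixed-point theorem for $w\mapsto(\mathcal P+\mathcal K)^{-1}[f+Q(w)]$ on a small ball $B_r(0)$, producing \emph{some} solution $\phi_2'=\phi_1(1+u')$ with $u'$ small, and finally invokes the \emph{uniqueness} assertion of Proposition \ref{SolveLichPhi} to conclude $\phi_2'=\phi_2$. The a priori smallness of $\phi_2-\phi_1$ thus comes out of uniqueness for free, which is the one idea your proposal is missing; if you add it, the rest of your argument goes through.
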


\begin{proof}
Let $\lambda_1 \in \HAR^{k,\alpha;2}$ and $A_1,B_1\in C^{k-2,\alpha}_2(M)$.
Suppose $\lambda_2\in \HAR^{k,\alpha;2}$ and $A_2, B_2\in C^{k-2,\alpha}_2(M)$ are such that
\begin{equation}
\label{LichCont-Apriori}
\gNorm{\bar\lambda_1 - \bar\lambda_2}_{k,\alpha;2}
+ \| A_1 - A_2\|_{C^{k-2,\alpha}_2(M)}
+ \| B_1 - B_2\|_{C^{k-2,\alpha}_2(M)}
\leq \delta
\end{equation}
for some $\delta>0$, chosen below with dependence on $\lambda_1$, $A_1$, and $B_1$.
For $i=1,2$, let $\phi_i$ be the corresponding solutions to 
\begin{equation}
\label{LichPhii}
\Delta_{\lambda_i}\phi_i = \frac18 R[\lambda_i]\phi_i - A_i \phi^{-7}_i - B_i \phi_i^{-3} + \frac34 \phi_i^5.
\end{equation}

In order to estimate $\phi_2-\phi_1$ we consider the function $u  := \phi_1^{-1}(\phi_2-\phi_1)$.
As $\phi_1\in C^{k,\alpha}_0(M)$ is positive and bounded below, and $\phi_2-\phi_1\in C^{k,\alpha}_2(M)$, 
we have $u \in C^{k,\alpha}_2(M)$.

We make use of the smooth remainder functions  $F_5,F_{-3},F_{-7}\colon (-1,\infty)\to \mathbb R$ that satisfy
\begin{equation*}
\begin{gathered}
t^{5}-t = 4(t-1) + (t-1)^2 F_5(t-1),
\\
t^{-3}-1 = -3(t-1) + (t-1)^2 F_{-3}(t-1),
\\
t^{-7}-1 = -7(t-1) + (t-1)^2 F_{-7}(t-1),
\end{gathered}
\end{equation*}

Let $\gamma = \phi_1^4\lambda_1$. We have
\begin{equation*}
\begin{aligned}
\Delta_\gamma u 
= \phi_1^{-5} \Big[  &-(\Delta_{\lambda_1}\phi_1)(u+1) 
+ (\Delta_{\lambda_2}\phi_2)
+\phi_1(\Delta_{\lambda_1} - \Delta_{\lambda_2})u
+((\Delta_{\lambda_1} - \Delta_{\lambda_2})\phi_1)u
\\
&
+2(\lambda_1^{-1} - \lambda_2^{-1})(\D\phi_1, \D u)
\Big].
\end{aligned}
\end{equation*}
Using \eqref{LichPhii} we find that $u$ satisfies
\begin{equation}
\label{LinearizedLichRatio}
\left(\mathcal P+\mathcal K\right)u
= f + Q(u);
\end{equation}
here the operators $\mathcal P$ and $\mathcal K$, and the functions $f$ and $Q$, are defined by
\begin{align*}
\mathcal Pu &= \Delta_\gamma u - (3+8\phi_1^{-12}A_1 + 4\phi_1^{-8} B_1)u,
\displaybreak[0]
\\
\mathcal Ku&= 
-\phi_1^{-4}(\Delta_{\lambda_1}- \Delta_{\lambda_2})u
-2\phi_1^{-5}[(\lambda_1)^{-1}-(\lambda_2)^{-1}](\D\phi_1,\D u)
\\&\qquad + \frac18\phi_1^{-4}\left(\R[\lambda_1]-\R[\lambda_2] \right)u
- \phi_1^{-5}(\Delta_{\lambda_1}\phi_1 - \Delta_{\lambda_2}\phi_1)u
\\&\qquad+ 7\phi_1^{-12}\left(A_1 - A_2 \right)u
+ 3 \phi_1^{-8} \left( B_1 - B_2\right)u,
\displaybreak[0]
\\
f &= \phi_1^{-5}(\Delta_{\lambda_1}\phi_1 - \Delta_{\lambda_2}\phi_1) 
-\frac18\phi_1^{-4}\left(R[\lambda_1]-R[\lambda_2] \right)
\\&\qquad
+\phi_1^{-12} (A_1 - A_2 ) + \phi_1^{-8}(B_1-B_2),
\displaybreak[0]
\\
Q(u) 
&= \frac34 u^2F_5(u) 
- \phi_1^{-12} A_2 u^2 F_{-7}(u)
-\phi_1^{-8} B_2 u^2 F_{-3}(u).
\end{align*}
Note that $\mathcal P$ satisfies the hypotheses of Proposition \ref{PerturbedPoisson}.

Lemma \ref{lemma:cont-R}, Proposition \ref{prop:GeoOpCont},
and \eqref{LichCont-Apriori} imply that 
for all $\delta$ sufficiently small we have
\begin{equation}
\label{f-Small}
\|f\|_{C^{k-2,\alpha}_2(M)}\leq C \delta
\end{equation}
and
\begin{equation}
\label{K-Small}
\|\mathcal K u \|_{C^{k-2,\alpha}_2(M)}
\leq C  \delta
\|u\|_{C^{k,\alpha}_2(M)}
\end{equation}
for all $u\in C^{k,\alpha}_2(M)$; here 
the constant $C$ depends on $\phi_1$, which is determined by $\lambda_1$, $A_1$, and $B_1$.

Since $8\phi_1^{-12}A_1+4\phi_1^{-8}B_1$ is in $ C^{k-1,\alpha}_2(M)$ and is nonnegative, Proposition \ref{PerturbedPoisson} implies that $\mathcal P$ is invertible as a map $C^{k,\alpha}_2(M)\to C^{k-2,\alpha}_2(M)$.
A Neumann-series argument 
using \eqref{K-Small}
shows that $\mathcal P + \mathcal K$ is invertible for sufficiently small $\delta$.
Thus  
\begin{equation}
\label{UniformInversion}
\|u\|_{C^{k,\alpha}_2(M)} \leq C^\prime \| (\mathcal P+\mathcal K)u\|_{C^{k-2,\alpha}_2(M)}
\end{equation}
for all functions $u\in C^{k,\alpha}_2(M)$.
Since $\mathcal P + \mathcal K$ is invertible, we may view \eqref{LinearizedLichRatio} as a fixed-point problem for the mapping 
\begin{equation*}
\mathcal G\colon w \mapsto (\mathcal P+ \mathcal K)^{-1}\left[f+Q(w)\right].
\end{equation*}

Fix $r_*\in (0,1)$.
Note that $Q(w) = \sum_i a_i u^2 F_i(w)$ for functions $a_i\in C^{k-2,\alpha}(M)$.
Because $F_i\colon (-1,\infty)\to\mathbb R$ is smooth,
there exists some constant $C_*$ such that if  $\|w\|_{C^{k,\alpha}_2(M)},\|v\|_{ C^{k,\alpha}_2(M)}\leq  r_*$ then 
\begin{align*}
\| F_i(v) \|_{C^{k-2,\alpha}(M)}
&\leq C_* \|v\|_{C^{k-2,\alpha}(M)},\\
\| F_i(w) - F_i(v)\|_{C^{k-2,\alpha}(M)}
&\leq C_* \|w-v\|_{C^{k-2,\alpha}(M)},
\end{align*}
and hence 
\begin{equation*}
\begin{aligned}
\| Q(w) &- Q(v)\|_{C^{k-2,\alpha}_2(M)}
\\
&\leq \sum_i \|a_i\|_{C^{k-2,\alpha}(M)} \| u^2F_i(w) - v^2F_i(v)\|_{C^{k-2,\alpha}_2(M)}
\\
&=\sum_i \|a_i\|_{C^{k-2,\alpha}(M)} \| (w^2 - v^2)F_i(v) + v^2(F_i(w)-F_i(v))\|_{C^{k-2,\alpha}_2(M)}\\
&\leq \sum_i \left(
C_* \|w-v\|_{C^{k-2,\alpha}_2(M)}
\|w+v\|_{C^{k-2,\alpha}(M)}
\right.
\\
&\qquad\qquad +
\left.
C_*\|v\|_{C^{k-2,\alpha}(M)}\|v\|_{C^{k-2,\alpha}_2(M)}
\|w-v\|_{C^{k-2,\alpha}(M)}
\right),
\end{aligned}
\end{equation*}
where $C_*$ now also depends on an upper bound for the norms of $\phi_1$, $A_2$ and $B_2$.

We now show that for sufficiently small $ r\in (0, r_*)$ the map $\mathcal G$ is a contraction from the ball $B_ r(0)$ in $C^{k,\alpha}_2(M)$ to itself.
Because $\|w\|_{C^{k,\alpha}(M)} = \|\rho^2 w\|_{C^{k,\alpha}_2(M)}$, there is a constant
$C_\rho\geq 1$ such that $\|w \|_{C^{k,\alpha}(M)}\leq C_\rho \|w\|_{C^{k,\alpha}_2(M)}$ for all $w\in C^{k,\alpha}_2(M)$.
Thus if $w,v\in B_r(0)$ we have
\begin{equation}
\label{Questimate}
\|Q(w) - Q(v)\|_{C^{k-2,\alpha}_2(M)}
\leq 3 C_* C_\rho^2 r \|w-v\|_{C^{k-2,\alpha}_2(M)}.
\end{equation}
Taking $v=0$ and using \eqref{UniformInversion} and \eqref{f-Small} we conclude that
\begin{equation*}
\begin{aligned}
\|\mathcal G(w)\|_{C^{k,\alpha}_2(M)}
&\leq C^\prime \| f + Q(w)\|_{C^{k-2,\alpha}_2(M)}
\\
&\leq C^\prime C\delta
+3 C^\prime C_* C_\rho^2 r^2.
\end{aligned}
\end{equation*}
Thus if we choose $r$ small enough that $3C'C_*C^2_\rho r<\frac12$ and then
$\delta$ small enough that $C'C\delta<r/2$,
it follows that $\mathcal G$ maps $B_r(0)$ to itself.
To see that $\mathcal G$ is a contraction for $r$ sufficiently small, we simply apply \eqref{UniformInversion} and 
\eqref{Questimate}
to $\mathcal G(w) - \mathcal G(v) = (\mathcal P + \mathcal K)(Q(w) - Q(v))$:
\begin{align*}
\|\mathcal G(w)-\mathcal G(v)\|_{C^{k,\alpha}_2(M)}
&\le C' \|Q(w)-Q(v)\|_{C^{k-2,\alpha}_2(M)}
\\
&\le 3 C' C_* C_\rho^2 r \|w-v\|_{C^{k-2,\alpha}_2(M)}.
\end{align*}

The Banach Fixed-Point Theorem then implies the existence of a unique $u^\prime\in B_r(0)$ satisfying $\mathcal G(u^\prime) = u^\prime$.
This gives rise to a positive and bounded function $\phi^\prime_2 = \phi_1(1+u^\prime)$ satisfying $\phi^\prime_2-1\in C^{k,\alpha}_2(M)$ and  \eqref{LichPhii} with $i=2$.
The uniqueness of solutions to \eqref{SimpleLichPhi} implies that $\phi_2 = \phi^\prime_2$ (i.e.,~that $u = u^\prime$).

Furthermore, we have
\begin{equation*}
\|\phi_1-\phi_2\|_{C^{k,\alpha}_2(M)} 
= \|\phi_1 u\|_{C^{k,\alpha}_2(M)}
\leq \|\phi_1\|_{C^{k,\alpha}(M)} \|u\|_{C^{k,\alpha}_2(M)}
\leq C_1r,
\end{equation*}
for some constant $C_1$  determined by $\lambda_1$, $A_1$, and $B_1$.
Thus given $\epsilon>0$, we just choose $r$ small enough that $C_1r<\epsilon$,
and then choose $\delta$ satisfying all of the constraints above.
\end{proof}

\section{Parametrization and existence of shear-free initial data}
\label{Section-PandE}

The results of Sections \ref{PDEResults} and \ref{PDE-Continuity} allow us to carry out the program outlined in \S\ref{Section-ConformalMethod} for parametrizing collections of initial data sets by collections of seed data, as well as showing that all shear-free seed data, in the regularity classes defined above, arises from free data.
We show that the maps taking the various regularity classes of free data sets to seed data sets, and the maps taking seed data sets to initial data sets, are continuous and thus give rise to homeomorphisms.

Continuity is established with respect to the  topologies induced from the normed spaces in which the various elements of each data set live.
For example, a sequence  $(g_i, K_i, \Phi_i)$ converges to $(g,K,\Phi)$ in $\mathscr D^{k,\alpha}$ if  $\bar g_i \to \bar g \in \mathscr C^{k,\alpha;2}(M)$, $\bar\Sigma_i \to \bar \Sigma \in \mathscr C^{k-1,\alpha;1}(M)$, and $\Phi_i \to \Phi$ in the space indicated in \eqref{MatterRegularity}.

We now address the parametrization of initial data in terms of seed data.
Proposition \ref{SolveLichPhi} implies that for each seed data set $( \lambda,\sigma,\Psi)$ we obtain a solution $\phi$ to equation \eqref{LichPhi}.
This gives rise to a map
\begin{equation}
\label{DefinePi}
\Pi\colon (\lambda, \sigma,\Psi)
\mapsto (g,K, \Phi) =  \(\phi^4\lambda, \phi^{-2}\sigma-g, \phi^2\odot\Psi),
\end{equation}
from which we obtain the desired parametrizations.

\begin{theorem}
\label{ParametrizationTheorem}
Let $k\geq 2$ and $\alpha\in (0,1)$.
The map $\Pi$ gives rise to a continuous surjection
\begin{equation*}
\Pi \colon \mathscr S^{k,\alpha} \to \mathscr D^{k,\alpha},
\end{equation*}
with right inverse $\iota\colon (g,K,\Phi) \mapsto (g,K+g,\Phi)$.
The map $\Pi$ descends to a homeomorphism 
\begin{equation}
\label{SeedHomeomorphisms}
\widetilde \Pi \colon \mathscr S^{k,\alpha}\slash{\sim} \to \mathscr D^{k,\alpha},
\end{equation}
where $\mathscr S^{k,\alpha}\slash{\sim}$ is the quotient of $\mathscr S^{k,\alpha}$ under the action of positive functions $\theta$ with $\theta -1\in C^{k,\alpha}_1(M)$ given by \eqref{ConformalCovarianceMapping}.

Furthermore, the map $\Pi$ restricts to surjective  maps $\mathscr S^{\infty} \to \mathscr D^{\infty}$ and $\mathscr S^{\phg} \to \mathscr D^{\phg}$.
These in turn descend to bijections $\mathscr S^{\infty}\slash{\sim} \to \mathscr D^{\infty}$ and $\mathscr S^{\phg}\slash{\sim} \to \mathscr D^{\phg}$, where $\mathscr S^\infty\slash{\sim}$ is the quotient of $\mathscr S^\infty$ under the action of positive functions $\theta$ with $\theta -1\in C^{\infty}_1(M)$, and $\mathscr S_\phg\slash{\sim}$ is the quotient of $\mathscr S_\phg$ under the action of positive functions $\theta\in C^2_\phg(\bar M)$ with $\theta-1= \mathcal O(\rho)$.

\end{theorem}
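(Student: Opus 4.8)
The plan is to organize everything around the map $\Pi$ of \eqref{DefinePi}, its candidate right inverse $\iota$, and the fact that the only failure of injectivity of $\Pi$ is accounted for by the transformation \eqref{ConformalCovarianceMapping}. First I would check that $\Pi$ is well defined $\mathscr S^{k,\alpha}\to\mathscr D^{k,\alpha}$ and that $\Pi\circ\iota=\mathrm{id}$. Given $(\lambda,\sigma,\Psi)\in\mathscr S^{k,\alpha}$, the functions $A:=\tfrac18|\sigma|_\lambda^2$ and $B:=\tfrac18(|\mathcal E|_\lambda^2+|\mathcal B|_\lambda^2+2\zeta)$ are non-negative and lie in $C^{k-2,\alpha}_2(M)$ (from $\rho\sigma\in\mathscr C^{k-1,\alpha;1}(M)$ and \eqref{MatterRegularity}), so Proposition~\ref{SolveLichPhi}\eqref{part:FirstSolveLichPhi} gives the Lichnerowicz solution $\phi$ with $\phi-1\in C^{k,\alpha}_1(M)$, $\phi>0$, $\phi|_{\partial M}=1$. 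The delicate point is that this only yields $g=\phi^4\lambda\in\WAH^{k,\alpha;1}$; to reach $\WAH^{k,\alpha;2}$ I would use Proposition~\ref{prop:DeformToHAR} to pick $\theta$ (with $\theta-1\in C^{k,\alpha}_1(M)$, $\theta\in\mathscr C^{k,\alpha;2}(M)$) so that $\tilde\lambda=\theta^4\lambda\in\HAR^{k,\alpha;2}$, note that the transformed seed data \eqref{ConformalCovarianceMapping} still lies in $\mathscr S^{k,\alpha}$, apply Proposition~\ref{SolveLichPhi}\eqref{part:simple-lich-better-r} to get $\tilde\phi-1\in C^{k,\alpha}_2(M)$ with $\tilde\phi^4\tilde\lambda\in\WAH^{k,\alpha;2}$, and then invoke Lemma~\ref{lemma:LichInvariance} with uniqueness to identify $g=\phi^4\lambda=\tilde\phi^4\tilde\lambda$. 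The remaining clauses of Definition~\ref{DefineCMCSFAH} are bookkeeping in the algebra $\mathscr C^{k-1,\alpha;1}(M)$: $\bar\Sigma=\phi^{-2}(\rho\sigma)$ has the stated regularity; the constraints hold by the computation that produced \eqref{LichPhi} together with \eqref{NewMomentumForSigma}; and for the shear-free condition I would use $\phi|_{\partial M}=1$ and Proposition~\ref{B-BasicProperties}\eqref{B-ConformalScaling} to obtain $\bar\Sigma|_{\partial M}=(\rho\sigma)|_{\partial M}=\B_{\rho^2\lambda}(\rho)|_{\partial M}=\B_{\bar g}(\rho)|_{\partial M}$, which is clause~\eqref{part:PhysicalShearFree} of Corollary~\ref{BShearFree}. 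For the right inverse, $\iota(g,K,\Phi)=(g,\Sigma,\Phi)$ satisfies the seed-data axioms by reading the CMC constraints backwards (shear-free again via Corollary~\ref{BShearFree}), and substituting $(\lambda,\sigma,\Psi)=(g,\Sigma,\Phi)$ into \eqref{LichPhi} one checks directly that $\phi\equiv1$ solves it; uniqueness in Proposition~\ref{SolveLichPhi}\eqref{part:FirstSolveLichPhi} then gives $\Pi(\iota(g,K,\Phi))=(g,K,\Phi)$, so $\Pi$ is onto.

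Next I would establish continuity of $\Pi$, which is where the bulk of the work lies. I would factor $(\lambda,\sigma,\Psi)\mapsto\phi-1$ through the $\HAR$ detour: $\lambda\mapsto\tilde\lambda$ is locally Lipschitz by Proposition~\ref{prop:DeformToHAR}; $(\lambda,\sigma,\Psi)\mapsto(\tilde\lambda,\tilde A,\tilde B)$ is continuous by the Lipschitz lemmas of \S\ref{PDE-Continuity} (metric inversion, the multiplicative structure of the triple-bar norms, etc.); $(\tilde\lambda,\tilde A,\tilde B)\mapsto\tilde\phi-1$ is continuous into $C^{k,\alpha}_2(M)$ by Proposition~\ref{prop:LichPhiCont}; and then $\phi=\theta\tilde\phi$, together with $\bar g=\tilde\phi^4(\rho^2\tilde\lambda)$, $\bar\Sigma=\phi^{-2}(\rho\sigma)$ and $\Phi=\phi^2\odot\Psi$, depends continuously on the data by the multiplicative closedness of $\mathscr C^{k,\alpha;2}(M)$, $\mathscr C^{k-1,\alpha;1}(M)$, and the spaces in \eqref{MatterRegularity}, using Lemma~\ref{lemma:BasicLipschitzFacts}. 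The care here is in tracking exactly which space each factor lands in (for instance $\bar g\in\mathscr C^{k,\alpha;2}(M)$ but only $\bar\Sigma\in\mathscr C^{k-1,\alpha;1}(M)$, and the continuous inclusion $C^{k,\alpha}_1(M)\hookrightarrow\mathscr C^{k-1,\alpha;1}(M)$ that legitimizes multiplication by $\phi^{\pm2}$). Continuity of $\iota\colon\mathscr D^{k,\alpha}\to\mathscr S^{k,\alpha}$ is immediate, since it merely records $\bar\lambda=\bar g$, $\rho\sigma=\bar\Sigma$, $\Psi=\Phi$.

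For the quotient and homeomorphism, Lemma~\ref{lemma:LichInvariance} shows $\Pi$ is constant on $\sim$-classes, so it descends to $\widetilde\Pi$. For injectivity of $\widetilde\Pi$: if $\Pi(\lambda_1,\sigma_1,\Psi_1)=\Pi(\lambda_2,\sigma_2,\Psi_2)$ with Lichnerowicz solutions $\phi_1,\phi_2$, then $\theta:=\phi_1/\phi_2$ is positive with $\theta-1\in C^{k,\alpha}_1(M)$ and $\theta|_{\partial M}=1$, and comparing the two decompositions \eqref{ConfDecomp} of the common $(g,K,\Phi)$ yields $(\lambda_2,\sigma_2,\Psi_2)=(\theta^4\lambda_1,\theta^{-2}\sigma_1,\theta^2\odot\Psi_1)$, i.e.\ the two seed data sets are $\sim$-equivalent. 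Hence $\widetilde\Pi$ is a continuous bijection, and since $\Pi\circ\iota=\mathrm{id}$ its inverse must equal $q\circ\iota$ (with $q$ the quotient map), which is continuous; therefore $\widetilde\Pi$ is a homeomorphism.

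Finally, for the smooth and polyhomogeneous classes: because the Lichnerowicz solution $\phi$ is unique across regularity classes, $(\lambda,\sigma,\Psi)\in\mathscr S^\infty$ forces $\phi-1\in C^{k,\alpha}_1(M)$ for all $k,\alpha$, so $\Pi$ restricts to $\mathscr S^\infty\to\mathscr D^\infty$; in the polyhomogeneous case one checks $\rho^{-2}A,\rho^{-2}B\in C^0_\phg(\bar M)$, so Proposition~\ref{SolveLichPhi}(c) gives $\rho^2g\in C^2_\phg(\bar M)$ and $\phi$ polyhomogeneous, whence $\bar\Sigma$ and $\Phi$ inherit polyhomogeneity as the $C^j_\phg$ spaces are algebras. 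Surjectivity of these restrictions again follows from $\Pi\circ\iota=\mathrm{id}$ together with the observation that $\iota$ carries $\mathscr D^\infty$ into $\mathscr S^\infty$ and $\mathscr D_\phg$ into $\mathscr S_\phg$; injectivity of the descended maps is the argument of the previous paragraph, checking that $\theta=\phi_1/\phi_2$ lies in the class allowed for the relevant quotient ($C^\infty_1(M)$ in the smooth case; $C^2_\phg(\bar M)$ with $\theta-1=\mathcal O(\rho)$ in the polyhomogeneous case, using $\phi_i^4=\tfrac13\operatorname{tr}_{\rho^2\lambda_i}(\rho^2g)\in C^2_\phg(\bar M)$). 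I expect the main obstacle to be the continuity argument in the second paragraph — patching together the Lipschitz estimates of \S\S\ref{PDEResults}--\ref{PDE-Continuity} while keeping precise control of weights and of the gap between $\mathscr C^{k,\alpha;2}(M)$ and $\mathscr C^{k-1,\alpha;1}(M)$ — with a close second being the bookkeeping in the first paragraph that forces the detour through $\HAR^{k,\alpha;2}$ in order to upgrade $g$ from $\WAH^{k,\alpha;1}$ to $\WAH^{k,\alpha;2}$.
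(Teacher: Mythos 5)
Your proposal is correct and follows essentially the same route as the paper: the detour through $\HAR^{k,\alpha;2}$ via Proposition \ref{prop:DeformToHAR}, Proposition \ref{SolveLichPhi}\eqref{part:simple-lich-better-r} combined with the conformal covariance of Lemma \ref{lemma:LichInvariance} to upgrade $g$ to $\WAH^{k,\alpha;2}$, continuity from Propositions \ref{prop:LichPhiCont} and \ref{prop:DeformToHAR}, and surjectivity plus the quotient/homeomorphism structure from the continuous right inverse $\iota$. You spell out several steps the paper leaves as ``straightforward to verify'' (the verification that $\iota$ lands in $\mathscr S^{k,\alpha}$ with $\phi\equiv 1$, and the injectivity of $\widetilde\Pi$ via $\theta=\phi_1/\phi_2$), all correctly.
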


\begin{proof}
Consider a seed data set $(\lambda, \sigma, \Psi)\in \mathscr S^{k,\alpha}$.
Let $\theta \in \mathcal C^{k,\alpha;2}(M)$ be the function given by Proposition \ref{prop:DeformToHAR} such that the scalar curvature of $\tilde\lambda = \theta^4\lambda$ satisfies $\R[\tilde\lambda]+6\in C^{k-2,\alpha}_2(M)$.
Observe that $(\tilde\lambda, \tilde\sigma,\tilde\Psi) = (\theta^4\lambda, \theta^{-2}\sigma, \theta^2\odot\Psi)\in \mathscr S^{k,\alpha}$.
Proposition \ref{prop:DeformToHAR} further implies that the map $(\lambda, \sigma,\Psi) \mapsto (\theta^4\lambda, \theta^{-2}\sigma, \theta^2\odot\Psi)$ is locally Lipschitz continuous.

It follows from the definition of $\mathscr S^{k,\alpha}$ that the functions 
\begin{equation}
\label{DefineAandB}
\tilde A:=\frac18|\tilde\sigma|^2_{\tilde\lambda}
\quad\text{ and }\quad
\tilde B:= \frac18\left(|\tilde{\mathcal E}|^2_{\tilde\lambda} + |\tilde{\mathcal B}|^2_{\tilde\lambda} + 2\tilde\zeta\right)
\end{equation}
are in $C^{k-1,\alpha}_2(M)$.
Thus from Proposition \ref{SolveLichPhi}\eqref{part:simple-lich-better-r}
there exists a unique positive $\tilde\phi$ with $\tilde\phi-1\in C^{k,\alpha}_2(M)$ satisfying
\begin{equation*}
\Delta_{\tilde\lambda}\tilde\phi = \frac18R[\tilde\lambda]\phi - \tilde A\tilde \phi^{-7} - \tilde B\tilde \phi^{-3} + \frac34\tilde \phi^5
\end{equation*}
and such that $\tilde\phi^4\tilde\lambda \in \WAH^{k,\alpha;2}$.
From Proposition \ref{prop:LichPhiCont}, the map $(\tilde\lambda, \tilde\sigma, \tilde\Psi)\mapsto \tilde\phi$ is continuous.
Thus setting $g = \tilde\phi^{4}\tilde\lambda$, $K = \tilde\phi^{-2}\tilde\sigma -g$, and $\Phi = \tilde\phi\odot\tilde\Psi$ we have that
$(\tilde\lambda, \tilde\sigma, \tilde\Psi)\mapsto (g, K,\Phi)$ is continuous. 

We now invoke the conformal covariance of the Lichnerowicz equation as recorded in Lemma \ref{lemma:LichInvariance} to conclude that $\phi = \theta\tilde\phi$ is the unique solution to \eqref{LichPhi} given by Proposition \ref{SolveLichPhi}\eqref{part:FirstSolveLichPhi} and thus $(g,K,\Phi) = \Pi(\lambda,\sigma,\Psi)$; furthermore the map $\Pi$ is continuous.

Since $\Pi$ has a continuous right inverse, it is surjective and is a quotient map.
The invariance of $\Pi$ and \eqref{LichPhi} under the transformation \eqref{ConformalCovarianceMapping} implies that $\Pi$ descends maps $\widetilde\Pi$ as in \eqref{SeedHomeomorphisms}.
It is straightforward to verify that for any $(g,K,\Phi)$, the preimage $\Pi^{-1}(g,K,\Phi)$ consists of a single equivalence class and thus $\widetilde\Pi$ provides a homeomorphism as claimed.

Finally, the regularity results of Proposition \ref{SolveLichPhi} further imply that $\Pi$ takes $\mathscr S^\infty$ to $\mathscr D^\infty$ and $\mathscr S_\phg$ to $\mathscr D_\phg$.
\end{proof}

We now show that each free data set in $\mathscr F^{k,\alpha}$, $\mathscr F^\infty$, or $\mathscr F_\phg$ projects continuously to a seed data set in $\mathscr S^{k,\alpha}$, $\mathscr S^\infty$,  or $\mathscr S_\phg$, respectively, and that every set of seed data can be obtained in this way.
To accomplish this, we define a map 
\begin{equation}
\Xi\colon (\lambda,\nu,\Upsilon) \mapsto (\lambda,\sigma,\Psi)
\end{equation}
based on the procedure described in \S\ref{Section-ConformalMethod}, which we now recall. If $(\lambda,\nu,\Upsilon)$ is a free data set with $\Upsilon = (e, b, j, \zeta)$, we apply  Corollary \ref{Helmholtz} to  obtain unique functions $u$ and $v$ such that the vector fields $\mathcal E := e-\grad_\lambda u$ and $\mathcal B := b-\grad_\lambda v$ are divergence free with respect to $\lambda$.
We subsequently set $\Psi = (\mathcal E, \mathcal B, j, \zeta)$.

We next invoke Proposition \ref{SolveVectorLaplacian} to obtain a vector field $W$ such that
\begin{equation}
\label{E-VL}
L_\lambda W = \Div_\lambda(\rho^{-1}\B_{\bar\lambda}(\rho)+\nu)^\sharp - j - \mathcal E\times_\lambda\mathcal B.
\end{equation}
We set $\sigma = \rho^{-1}\B_{\bar\lambda}(\rho)+\nu + \mathcal D_\lambda W$.

\begin{theorem}
\label{ExistenceProjection}
Fix $k\geq 3$ and $\alpha \in (0,1)$. 
Then the map $\Xi$ gives rise to a continuous projection $\Xi\colon \mathscr F^{k,\alpha} \to \mathscr S^{k,\alpha}$.
It furthermore restricts to projections $\mathscr F^\infty \to \mathscr S^\infty$ and $\mathscr F_\phg \to \mathscr S_\phg$.
\end{theorem}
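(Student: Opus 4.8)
The plan is to verify, in order: (i) that $\Xi$ actually takes $\mathscr F^{k,\alpha}$ into $\mathscr S^{k,\alpha}$; (ii) that $\Xi$ is continuous; (iii) that $\Xi$ is surjective, by exhibiting an explicit section; and (iv) that it restricts appropriately to the smooth and polyhomogeneous subclasses. For (i), I would fix $(\lambda,\nu,\Upsilon)\in\mathscr F^{k,\alpha}$ with $\Upsilon=(e,b,j,\zeta)$. Since $e,b\in C^{k-2,\alpha}_1(M)$ and $\lambda\in\WAH^{k,\alpha;2}\subset\WAH^{k-1,\alpha;1}$, applying Corollary~\ref{Helmholtz} at regularity level $k-1$ and weight $\delta=1$ produces $u,v\in C^{k-1,\alpha}_1(M)$ with $\mathcal E=e-\grad_\lambda u$ and $\mathcal B=b-\grad_\lambda v$ divergence-free and in $C^{k-2,\alpha}_1(M)$, so $\Psi=(\mathcal E,\mathcal B,j,\zeta)$ has the regularity demanded of seed data. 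By Proposition~\ref{B-in-WAH}, $\B_{\bar\lambda}(\rho)\in C^{k-1,\alpha}_2(M)\cap\mathscr C^{k-1,\alpha;1}(M)$; by Lemma~\ref{lemma:cont-hess-etc}(c), $\Div_\lambda(\rho^{-1}\B_{\bar\lambda}(\rho))\in C^{k-2,\alpha}_2(M)$; and together with the regularity of $\nu$ and $j$ and the bilinearity of $\times_\lambda$ (which sends $C^{k-2,\alpha}_1\times C^{k-2,\alpha}_1$ into $C^{k-2,\alpha}_2$), the right-hand side of~\eqref{E-VL} lies in $C^{k-2,\alpha}_2(M)$. Since $k\geq 3$, Proposition~\ref{SolveVectorLaplacian} (with $\delta=2$) yields a unique $W\in C^{k,\alpha}_2(M)$, hence $\mathcal D_\lambda W\in C^{k-1,\alpha}_2(M)$. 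Setting $\sigma=\rho^{-1}\B_{\bar\lambda}(\rho)+\nu+\mathcal D_\lambda W$, the tensor $\sigma$ is symmetric and $\lambda$-trace-free (using Proposition~\ref{B-BasicProperties}); the identities $\mathcal D_\lambda^* T=-(\Div_\lambda T)^\sharp$ and $L_\lambda=\mathcal D_\lambda^*\mathcal D_\lambda$ together with~\eqref{E-VL} give~\eqref{NewMomentumForSigma}; and, recalling that for a covariant $2$-tensor $\mathscr C^{k-1,\alpha;0}(M)$ is just $C^{k-1,\alpha}_2(M)$, Lemma~\ref{lemma:BasicLipschitzFacts}(c) puts $\rho\nu$ and $\rho\mathcal D_\lambda W$ in $\mathscr C^{k-1,\alpha;1}(M)$, where they vanish on $\partial M$ (being $\rho$ times tensors bounded with respect to $\bar\lambda$). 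Hence $\rho\sigma\in\mathscr C^{k-1,\alpha;1}(M)$ with $\rho\sigma|_{\partial M}=\B_{\bar\lambda}(\rho)|_{\partial M}$, that is, $(\lambda,\sigma,\Psi)\in\mathscr S^{k,\alpha}$.

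For (ii), I would assemble the continuity statements of~\S\ref{PDE-Continuity}. The map $(\lambda,e)\mapsto\Div_\lambda e$ is locally Lipschitz by Proposition~\ref{prop:GeoOpCont} (with linearity in $e$), and $(\lambda,f)\mapsto u$ solving $\Delta_\lambda u=f$ is locally Lipschitz by Proposition~\ref{prop:LinearContinuity}; composing with $\lambda\mapsto\grad_\lambda$ shows $(\lambda,e)\mapsto\mathcal E$ and $(\lambda,b)\mapsto\mathcal B$ are continuous into $C^{k-2,\alpha}_1(M)$. By Lemma~\ref{lemma:cont-hess-etc}(b),(c), $\lambda\mapsto\B_{\bar\lambda}(\rho)$ is locally Lipschitz into $\mathscr C^{k-1,\alpha;1}(M)$ and $\lambda\mapsto\Div_\lambda(\rho^{-1}\B_{\bar\lambda}(\rho))$ into $C^{k-2,\alpha}_2(M)$; adding the continuously varying terms $\Div_\lambda\nu^\sharp$, $j$, and $\mathcal E\times_\lambda\mathcal B$ makes the right-hand side of~\eqref{E-VL} continuous in $C^{k-2,\alpha}_2(M)$, so Proposition~\ref{prop:LinearContinuity} for $L_\lambda$ makes $W$ continuous in $C^{k,\alpha}_2(M)$ and $\mathcal D_\lambda W$ continuous in $C^{k-1,\alpha}_2(M)$. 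Lemma~\ref{lemma:BasicLipschitzFacts}(c) then gives continuity of $\rho\nu$ and $\rho\mathcal D_\lambda W$ in $\mathscr C^{k-1,\alpha;1}(M)$, hence of $\rho\sigma$; together with the trivial continuity of $\lambda$ and of the matter components this yields continuity of $\Xi$ into the product topology defining $\mathscr S^{k,\alpha}$.

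For (iii), I would run the construction backwards: given $(\lambda,\sigma,\Psi)\in\mathscr S^{k,\alpha}$ with $\Psi=(\mathcal E,\mathcal B,j,\zeta)$, set $\nu:=\sigma-\rho^{-1}\B_{\bar\lambda}(\rho)$ and $\Upsilon:=(\mathcal E,\mathcal B,j,\zeta)$. Then $\rho\nu\in\mathscr C^{k-1,\alpha;1}(M)$ (by Proposition~\ref{B-in-WAH}) vanishes on $\partial M$ by~\eqref{sigmaBC}, so Lemma~2.2(d) of~\cite{WAH-Preliminary} gives $\rho\nu\in C^{k-1,\alpha}_3(M)$, that is, $\nu\in C^{k-1,\alpha}_2(M)$; as $\nu$ is $\lambda$-trace-free, $(\lambda,\nu,\Upsilon)\in\mathscr F^{k,\alpha}$. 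Applying $\Xi$ to this free data, the Helmholtz step returns the already-divergence-free $\mathcal E,\mathcal B$ unchanged (uniqueness forces $u=v=0$), while the right-hand side of~\eqref{E-VL} becomes $(\Div_\lambda\sigma)^\sharp-j-\mathcal E\times_\lambda\mathcal B=0$ by~\eqref{NewMomentumForSigma}, so $W=0$ by triviality of the kernel of $L_\lambda$ (Proposition~\ref{SolveVectorLaplacian}); hence $\Xi(\lambda,\nu,\Upsilon)=(\lambda,\sigma,\Psi)$. The section $(\lambda,\sigma,\Psi)\mapsto(\lambda,\nu,\Upsilon)$ is continuous (using Lemma~\ref{lemma:cont-hess-etc} and continuity of division by $\rho$ on boundary-vanishing elements of $\mathscr C^{k-1,\alpha;1}(M)$), so $\Xi$ is the claimed continuous projection. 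For (iv), the statement $\mathscr F^\infty\to\mathscr S^\infty$ follows by applying the above for every $k\geq 3$ and $\alpha$, uniqueness of the Helmholtz and vector-Laplacian solutions ensuring consistency; for $\mathscr F_\phg\to\mathscr S_\phg$ one invokes the polyhomogeneity parts of Corollary~\ref{Helmholtz} and Proposition~\ref{SolveVectorLaplacian}, together with the polyhomogeneity of $\B_{\bar\lambda}(\rho)$ from~\cite{WAH-Preliminary}, to conclude $\rho^2\lambda\in C^2_\phg(\bar M)$, $\rho\sigma\in C^1_\phg(\bar M)$, and $\rho\odot\Psi\in C^0_\phg(\bar M)$, again matching the $C^{k,\alpha}$ solution by uniqueness.

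I expect the main obstacle to be the weighted-regularity bookkeeping in step (i): checking $\rho\sigma\in\mathscr C^{k-1,\alpha;1}(M)$ and the boundary identity~\eqref{sigmaBC}. Because the right-hand side of~\eqref{E-VL} is only in $C^{k-2,\alpha}_2(M)$, Proposition~\ref{SolveVectorLaplacian} places $W$ in $C^{k,\alpha}_2(M)$, hence $\mathcal D_\lambda W$ at weight $2$ rather than $3$, so $\mathcal D_\lambda W$ is merely bounded, not vanishing, at $\partial M$; the extra decay — and thus the shear-free character of the output seed data — is recovered only after multiplying by $\rho$, which forces careful tracking of the conformal factors and the use of the spaces $\mathscr C^{k-1,\alpha;m}(M)$ rather than ordinary weighted Hölder spaces. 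The hypothesis $k\geq 3$ enters precisely here, via Remark~\ref{WhyK3}: it is what makes $W$ (hence $\sigma$, and hence the section in step (iii)) well defined.
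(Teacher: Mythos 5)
Your proof is correct and follows essentially the same route as the paper: the Helmholtz splitting for the matter fields, solving the vector Laplacian at weight $2$ so that $\rho\,\mathcal D_\lambda W$ vanishes on $\partial M$ and \eqref{sigmaBC} holds, the same continuity lemmas from \S\ref{PDE-Continuity}, and the same section $\iota\colon(\lambda,\sigma,\Psi)\mapsto(\lambda,\sigma-\rho^{-1}\B_{\bar\lambda}(\rho),\Psi)$. Your verification that $\iota$ actually lands in $\mathscr F^{k,\alpha}$ (via Lemma 2.2(d) of \cite{WAH-Preliminary}) and that $\Xi\circ\iota=\Id$ is more explicit than the paper's one-line assertion, but it is the same argument.
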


\begin{proof}
Suppose $(\lambda, \nu, \Upsilon)\in \mathscr F^{k,\alpha}$ is a free data set with $\Upsilon = (e,b,j,\zeta)$.
Corollary \ref{Helmholtz} ensures that there exist functions $u$ and $v$ in $ C^{k-1,\alpha}_1(M)$ satisfying \eqref{LaplaceUV} and such that $\mathcal E = e - \grad_\lambda u$ and  $\mathcal B= \grad_\lambda v$ are divergence-free vector fields in $C^{k-2,\alpha}_1(M)$.
Thus $\Psi = (\mathcal E, \mathcal B, j, \zeta)$ has the regularity of \eqref{MatterRegularity},  as required by the definition of $\mathscr S^{k,\alpha}$.
Since divergence is a geometric operator, Propositions \ref{prop:LinearContinuity} and \ref{prop:GeoOpCont} imply that $\Upsilon \mapsto \Psi$ is locally Lipschitz continuous.

Proposition \ref{B-BasicProperties}\eqref{B-TransverseProperty} implies that
\begin{equation}
\label{DivergenceOfMu}
\begin{aligned}
\Div_\lambda (\rho^{-1}\B_{\bar\lambda}(\rho)+\nu) 
&= \rho^{-1}\Div_\lambda \B_{\bar\lambda}(\rho) 
+ \Div_\lambda\nu
\\
&= \rho
\Div_{\bar\lambda}\B_{\bar\lambda}(\rho)
+ \Div_\lambda\nu,
\end{aligned}
\end{equation}
and thus Lemma \ref{B-in-WAH} implies that $\Div_\lambda (\rho^{-1}\B_{\bar\lambda}(\rho)+\nu) \in C^{k-2,\alpha}_2(M)$.
Consequently, we obtain from Proposition \ref{SolveVectorLaplacian} a vector field $W\in C^{k,\alpha}_2(M)$ satisfying \eqref{E-VL}.

Since $W\in C^{k,\alpha}_2(M)$ we have $\mathcal D_\lambda W\in C^{k-1,\alpha}_2(M)$.
Thus, setting 
$$\sigma
=  \rho^{-1}\B_{\bar \lambda}(\rho) + \nu +\mathcal D_{\lambda} W,$$ 
we see that \eqref{sigmaBC} is satisfied and the map $\Xi$ takes free data sets in $\mathscr F^{k,\alpha}$ to seed data sets in $\mathscr S^{k,\alpha}$ as claimed.
It follows immediately that $\Xi$ also maps $\mathscr S^\infty$ to $\mathscr S^\infty$.

Lemma \ref{lemma:cont-hess-etc} provides the continuity of $\lambda\mapsto \B_{\bar\lambda}(\rho)$ and of $\lambda\mapsto \Div_{\lambda}(\rho^{-1}\B_{\bar\lambda}(\rho))$.
Thus, as $\mathcal D_\lambda$ is a geometric operator, Propositions \ref{prop:LinearContinuity} and \ref{prop:GeoOpCont} imply that $(\lambda,\nu)\mapsto \sigma$ is locally Lipschitz continuous.

In the case of polyhomogeneous seed data, we note that the regularity results of Corollary \ref{Helmholtz} and Proposition \ref{SolveVectorLaplacian} give rise to polyhomogeneous solutions, and thus $\Xi$ indeed maps $\mathscr F_\phg$ to $\mathscr S_\phg$.

We conclude the proof by noting that
\begin{equation}
\iota\colon(\lambda, \sigma, \Psi)
\mapsto
(\lambda, \sigma- \rho^{-1}\B_{\lambda}(\rho), \Psi)
\end{equation}
 satisfies $\Xi\circ\iota = \Id$ and thus the maps are indeed projections.
\end{proof}

\section{Weakly asymptotically hyperbolic solutions}
\label{Section-WeakExistence}

The previous section describes how the PDE results of \S\ref{PDEResults} may be used to obtain shear-free initial data, provided the free metric is in $\WAH^{k,\alpha;2}$.
The same sequence of PDE results can also be used to obtain weakly asymptotically hyperbolic solutions to the constraint equations with less boundary regularity.
In particular, we may construct solutions to the constraint equations that need not be $C^1$ conformally compact, and may not be sufficiently regular to make sense of the shear-free condition.

\begin{theorem}
\label{WeakExistenceTheorem}
Suppose $\lambda  \in \WAH^{k,\alpha;1}$ with $k\geq 3$ and $\alpha \in (0,1)$.
Let $ \nu\in C^{k-1,\alpha}_1(M)$ be a symmetric covariant $2$-tensor field that is traceless with respect to $\lambda$, and let $\Upsilon = (e,b,j,\zeta) \in C^{k-2,\alpha}_1(M)$ be a set of matter fields.

Then there exist uniquely defined functions $u,v\in C^{k-1,\alpha}_1(M)$, a uniquely defined vector field $W\in C^{k,\alpha}_1(M)$, and a uniquely defined and positive function $\phi$ with $\phi-1\in C^{k,\alpha}_1(M)$ such that
\begin{equation}
\begin{gathered}
g := \phi^4 \lambda 
\in \WAH^{k,\alpha;1},
\\
\Sigma := \phi^{-2}(\rho^{-1}\B_{\rho^2\lambda}(\rho) + \nu + \mathcal D_\lambda W) 
\in C^{k-1,\alpha}_1(M),
\\
\Phi := \phi^2\odot \left( \Upsilon - (\grad_{\lambda}u, \grad_{\lambda}v, 0, 0) \right)
\in C^{k-2,\alpha}_1(M),
\end{gathered}
\end{equation}
give rise to a solution $(g,K,\Phi) = (g,\Sigma - g,\Phi)$ to the constraint equations \eqref{HamiltonianConstraint}--\eqref{MomentumConstraint}--\eqref{MaxwellConstraints} on $M$.
\end{theorem}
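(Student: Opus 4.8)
The plan is to carry out, at the single weight $\delta=1$ and using only the $\WAH^{k,\alpha;1}$ hypotheses, exactly the sequence of steps of \S\ref{Section-ConformalMethod} that were used (at weight $2$, over $\WAH^{k,\alpha;2}$) in the proofs of Theorems \ref{ParametrizationTheorem} and \ref{ExistenceProjection}. The point is that every PDE result of \S\ref{PDEResults} is already available here: Corollary \ref{Helmholtz} with $\delta=1\in(0,2)$, Proposition \ref{SolveVectorLaplacian} with $\delta=1\in(-1,3)$ and $k\geq 3$, and Proposition \ref{SolveLichPhi}\eqref{part:FirstSolveLichPhi}; moreover $\HAR^{k,\alpha;1}=\WAH^{k,\alpha;1}$, so---unlike in Theorem \ref{ParametrizationTheorem}---no conformal normalization of $\lambda$ is needed.

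First I would apply Corollary \ref{Helmholtz} to the vector fields $e$ and $b$, obtaining the unique functions $u,v\in C^{k-1,\alpha}_1(M)$ solving \eqref{LaplaceUV}, so that $\mathcal E:=e-\grad_\lambda u$ and $\mathcal B:=b-\grad_\lambda v$ are divergence-free with respect to $\lambda$ and lie in $C^{k-2,\alpha}_1(M)$; set $\Psi=(\mathcal E,\mathcal B,j,\zeta)$. Next, writing $\bar\lambda:=\rho^2\lambda$, Lemma \ref{B-in-WAH}(1) gives $\B_{\bar\lambda}(\rho)\in C^{k-1,\alpha}_2(M)$, so $\mu:=\rho^{-1}\B_{\bar\lambda}(\rho)+\nu\in C^{k-1,\alpha}_1(M)$ and hence $\Div_\lambda\mu\in C^{k-2,\alpha}_1(M)$. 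Since $j$ and $\mathcal E\times_\lambda\mathcal B$ also lie in $C^{k-2,\alpha}_1(M)$, the right-hand side of \eqref{FirstVL} does as well, so Proposition \ref{SolveVectorLaplacian} furnishes a unique $W\in C^{k,\alpha}_1(M)$ with $L_\lambda W=(\Div_\lambda\mu)^\sharp-j-\mathcal E\times_\lambda\mathcal B$; here the hypothesis $k\geq 3$ is exactly what is needed to rule out a nontrivial kernel, via Lemma \ref{TrivialCKF} (cf.~Remark \ref{WhyK3}). Setting $\sigma:=\mu+\mathcal D_\lambda W\in C^{k-1,\alpha}_1(M)$ and using $\mathcal D_\lambda^*\mathcal D_\lambda=L_\lambda$ together with $(\Div_\lambda T)^\sharp=-\mathcal D_\lambda^*T$, one obtains $(\Div_\lambda\sigma)^\sharp=j+\mathcal E\times_\lambda\mathcal B$.

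I would then solve the Lichnerowicz equation. The functions $A:=\tfrac18|\sigma|^2_\lambda$ and $B:=\tfrac18(|\mathcal E|^2_\lambda+|\mathcal B|^2_\lambda+2\zeta)$ are non-negative and, by the algebra properties of the weighted function spaces together with $\lambda\in\WAH^{k,\alpha;1}$, belong to $C^{k-2,\alpha}_1(M)$. Proposition \ref{SolveLichPhi}\eqref{part:FirstSolveLichPhi} then produces a unique positive $\phi$ with $\phi-1\in C^{k,\alpha}_1(M)$ and $\phi|_{\partial M}=1$ satisfying \eqref{LichPhi}, and guarantees $g:=\phi^4\lambda\in\WAH^{k,\alpha;1}$. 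Finally I would set $\Sigma:=\phi^{-2}\sigma$, $K:=\Sigma-g$, and $\Phi:=\phi^2\odot\Psi$; the asserted regularities are immediate since $\phi^{\pm 1}$ is bounded in $C^{k,\alpha}(M)$. That $(g,K,\Phi)$ solves \eqref{HamiltonianConstraint}, \eqref{MomentumConstraint}, \eqref{MaxwellConstraints} is the conformal-method computation of \S\ref{Section-ConformalMethod}: since $\Sigma$ is trace-free with respect to $g$ one has $\tr_g K\equiv -3$, so $\D\tau=0$; the Maxwell constraints hold because $\Div_g(\phi^{-6}\mathcal E)=\phi^{-6}\Div_\lambda\mathcal E=0$ and likewise for $\mathcal B$; the momentum constraint reduces, using $\Div_g(\phi^{-2}\sigma)=\phi^{-6}\Div_\lambda\sigma$, to $(\Div_\lambda\sigma)^\sharp=j+\mathcal E\times_\lambda\mathcal B$, which holds by construction; and the Hamiltonian constraint is, via \eqref{ConformalChangeScalarCurvature}, equivalent to the Lichnerowicz equation for $\phi$. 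Uniqueness of $u$, $v$, $W$, and $\phi$ is inherited from the uniqueness clauses of Corollary \ref{Helmholtz}, Proposition \ref{SolveVectorLaplacian}, and Proposition \ref{SolveLichPhi}.

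There is essentially no new analytic obstacle: the entire argument is a bookkeeping exercise verifying that, because the linear and semilinear solvability results of \S\ref{PDEResults} hold over $\WAH^{k,\alpha;1}$ for weights in the relevant intervals, every field produced lands at weight $1$, and that the $\WAH^{k,\alpha;2}$ hypothesis of the earlier theorems was used \emph{only} to gain the extra weight that makes the shear-free condition meaningful. The one point where the sharp hypotheses genuinely enter is the invertibility of $L_\lambda$ at weight $1$, which requires $k\geq 3$ through Lemma \ref{TrivialCKF}.
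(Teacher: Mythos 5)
Your proposal is correct and follows essentially the same route as the paper's proof: Helmholtz decomposition of $e$ and $b$ via Corollary \ref{Helmholtz}, solution of the vector Laplacian at weight $1$ via Proposition \ref{SolveVectorLaplacian} (with $k\geq 3$ entering only through Lemma \ref{TrivialCKF}), and then Proposition \ref{SolveLichPhi}\eqref{part:FirstSolveLichPhi} for the Lichnerowicz equation, all over $\WAH^{k,\alpha;1}$. The only cosmetic difference is that you bound $\Div_\lambda(\rho^{-1}\B_{\bar\lambda}(\rho)+\nu)$ directly from $\rho^{-1}\B_{\bar\lambda}(\rho)+\nu\in C^{k-1,\alpha}_1(M)$ rather than first commuting $\rho^{-1}$ past the divergence using Proposition \ref{B-BasicProperties}\eqref{B-TransverseProperty}, which yields the same weight.
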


\begin{proof}
Applying Corollary \ref{Helmholtz} to $e$ and $b$, we obtain functions $u,v\in C^{k-1,\alpha}_1(M)$ such that $\mathcal E = e-\grad_\lambda u$ and $\mathcal B = b-\grad_\lambda v$ are divergence-free vector fields in $C^{k-2,\alpha}_1(M)$.

From Proposition \ref{B-in-WAH} we have $\B_{\rho^2\lambda}(\rho)\in C^{k-1,\alpha}_2(M)$, and from  Proposition \ref{B-BasicProperties}\eqref{B-TransverseProperty} we have
\begin{equation}
\Div_\lambda [\rho^{-1}\B_{\rho^2\lambda}(\rho)] = \rho^{-1}\Div_\lambda [\B_{\rho^2\lambda}(\rho)]\in C^{k-2,\alpha}_1(M).
\end{equation}
Thus the vector field $Y := \Div_\lambda [\rho^{-1}\B_{\rho^2\lambda}(\rho) + \bar \nu]^\sharp -  j - \mathcal E \times_\lambda \mathcal B$ is an element of $C^{k-2,\alpha}_1(M)$.
By Proposition \ref{SolveVectorLaplacian} there exists a unique vector field $W\in C^{k,\alpha}_1(M)$ such that $L_\lambda W= Y$.

Let $\sigma = \rho^{-1}\B_{\rho^2\lambda}(\rho) + \bar\nu + \mathcal D_\lambda W$.
Note that both 
\begin{equation}
A=\frac18|\sigma|^2_\lambda
\quad\text{ and }\quad
B=\frac18\left(|\mathcal E|^2_\lambda + |\mathcal B|^2_\lambda + 2\zeta \right)
\end{equation}
are functions in $C^{k-1,\alpha}_{1}(M)$.
Proposition \ref{SolveLichPhi} thus guarantees the existence of a unique positive function $\phi$ with $\phi-1\in C^{k,\alpha}_1(M)$ that satisfies \eqref{LichPhi}.
\end{proof}

We remark that the continuity of the solution map defined by Theorem \ref{WeakExistenceTheorem} can be established by arguments analogous to, but simpler than, those used above.

We  emphasize that the boundary regularity required of  $\lambda$ in Theorem \ref{WeakExistenceTheorem} is significantly weaker than in the shear-free setting. 
Whereas in the shear-free setting the metric $\bar\lambda = \rho^2\lambda$ extends to a tensor field of class $C^{1,1}$ on $\bar M$, in the present setting the metric $\bar\lambda$ extends to a $C^{0,1}$, but not necessarily $C^1$, tensor field on $\bar M$.
Consequently, Theorem \ref{WeakExistenceTheorem} is an  improvement of previous existence theorems, such as those in \cite{AnderssonChrusciel-Dissertationes}, where it is assumed that the ``background'' metric $\bar\lambda$ is $C^2$ on $\bar M$.

Likewise, the tensor field $ \nu$, which in the shear-free setting was required to be in $L^\infty(\bar M)$, may not even be pointwise bounded with respect to  the background metric $\bar h$ under the hypotheses of Theorem \ref{WeakExistenceTheorem}.

\bibliographystyle{plain}
\bibliography{AHEM}

\end{document}